\documentclass[11pt]{article}
\usepackage[margin=1.2in]{geometry}
\usepackage{amsmath,amsfonts,amssymb,amsthm}
\usepackage{graphicx} 
\usepackage{subcaption}
\usepackage{xspace}
\usepackage{xcolor}
\usepackage{authblk}
\usepackage{wrapfig}
\usepackage[most]{tcolorbox}
\numberwithin{equation}{section}
\usepackage{tikz}
\usetikzlibrary{calc,arrows.meta}

\usepackage{dsfont}

\usepackage[numbers]{natbib}
\usepackage[pagebackref=true,linktocpage=true]{hyperref}
\hypersetup{
	colorlinks=true,
	linkcolor=blue,
	citecolor=blue
}

\newcommand{\risk}{\mathbf{R}}
\newcommand{\prisk}{\mathbf{S}} %
\newcommand{\ind}[1]{\mathbf{1}\left\{#1\right\}}
\newcommand{\Xbar}{\overline{X}}
\newcommand{\sigmahat}{\widehat{\sigma}}
\renewcommand{\d}{\textnormal{d}\xspace}
\newcommand{\eps}{\epsilon}
\newcommand{\1}{\mathds{1}}
\newcommand{\E}{\mathbb{E}}
\newcommand{\NN}{\mathbb{N}}
\renewcommand{\Re}{\mathbb{R}}
\newcommand{\Var}{\mathbb{V}}
\renewcommand{\Bar}[1]{\overline{#1}}
\newcommand{\shat}{\hat{s}}
\newcommand{\dan}{\textsc{DAN}}

\newcommand{\taumax}{{\tau \lor m}}

\newcommand{\Pin}{{P \in \calP}}
\newcommand{\brackm}{{(m)}}

\newcommand{\iwr}{{\textsc{iwr}}\xspace}
\newcommand{\sn}{{\textsc{sn}}\xspace}
\newcommand{\reg}{{\textsc{reg}}\xspace}
\newcommand{\rws}{{\textsc{r-ws}}\xspace}
\newcommand{\mix}{{\textsc{mix}}\xspace}
\newcommand{\wald}{{\textsc{wald}}\xspace}

\newcommand{\infseqkm}[1]{(#1)_{k \geq m}}
\newcommand{\infseqm}[1]{(#1)_{m \in \NN}}

\newcommand{\aphci}{\textsc{aph-ci}\xspace}
\newcommand{\aphcs}{\textsc{aph-cs}\xspace}
\newcommand{\aphcis}{\textsc{aph-ci}s\xspace} %
\newcommand{\aphpval}{\textsc{aph-pval}\xspace}
\newcommand{\aphpvals}{\textsc{aph-pval}s\xspace}

\def\ddefloop#1{\ifx\ddefloop#1\else\ddef{#1}\expandafter\ddefloop\fi}
\def\ddef#1{\expandafter\def\csname cal#1\endcsname{\ensuremath{\mathcal{#1}}}}
\ddefloop ABCDEFGHIJKLMNOPQRSTUVWXYZ\ddefloop

\newtheorem{theorem}{Theorem}[section]
\newtheorem{proposition}[theorem]{Proposition}
\newtheorem{corollary}[theorem]{Corollary}
\newtheorem{lemma}[theorem]{Lemma}
\newtheorem{observation}[theorem]{Observation}
\theoremstyle{definition}
\newtheorem{definition}[theorem]{Definition}
\newtheorem{remark}[theorem]{Remark}

\title{Post-Hoc Large-Sample Statistical Inference}
\author[1]{Ben Chugg\thanks{Equal contribution. Correspondence: \texttt{benchugg@cmu.edu}}} 
\newcommand{\equalcontrib}{\footnotemark[\arabic{footnote}]}
\author[2]{Etienne Gauthier\protect\equalcontrib}
\author[2,3]{Michael I.\ Jordan}
\author[1]{\\Aaditya Ramdas}
\author[3]{Ian Waudby-Smith}
{ 
\affil[1]{\small Carnegie Mellon University}
\affil[2]{Inria \& \'Ecole Normale Sup\'erieure}
\affil[3]{University of California, Berkeley}
}
\date{March 2026 }

\begin{document}

\maketitle

\begin{abstract}
	We derive inferential procedures for large sample sizes that remain valid under data-dependent significance levels (so-called ``post-hoc valid inference''). 
    Classical statistical tools require 
    that the significance level---the ``type-I error''---is selected prior to seeing or analyzing any data. This restriction leads to some drawbacks. For instance, if an analyst generates an inconclusive 
    confidence interval, repeating the process with a larger 
    significance level is not an option---the result is final. 
    Recently, e-values have emerged as the solution to this problem, being both necessary and sufficient tools for performing various forms of post-hoc inference. 
    All such results, however, have thus far been nonasymptotic. As a result, they inherit familiar limitations of nonasymptotic inferential procedures such as requiring strong moment assumptions and being conservative in general. This paper develops a theory of post-hoc inference in the asymptotic setting, yielding asymptotic post-hoc confidence sets and asymptotic post-hoc p-values that make weaker assumptions and are sharper than their nonasymptotic counterparts. 
\end{abstract}

{
\small 
\setcounter{tocdepth}{2}
\tableofcontents
}

\section{Introduction}

The majority of applied statistical methods are asymptotic in the sense that their validity holds in the limit as number of samples tends to infinity. Asymptotic inference is widely deployed in practice because it typically requires weak moment assumptions. This is in contrast to nonasymptotic inference, which necessarily relies on knowledge of strong moment assumptions; see \citet{bahadur1956nonexistence} for a formal description of how nonasymptotic inference fails without those assumptions. 

Despite its wide applicability, asymptotic inference as it is used in practice still suffers the following drawback: 
the significance level---often denoted by $\alpha \in (0, 1)$---must be set before any data analysis takes place. Take confidence intervals (CIs) for instance. Once such an interval is computed, recomputing it on the same data with a different significance parameter rids it of the statistical guarantees it would have had under data-independent significance values. 

In this work we show how asymptotic e-values offer a solution---and as we will show, the only solution---to this problem. We provide a flexible framework for constructing asymptotic post-hoc confidence intervals, p-values, and hypothesis tests that allow $\alpha$ to be chosen after seeing the data. The post-hoc potential of e-values has been investigated in nonasymptotic hypothesis testing~\citep{grunwald2024beyond,koning2025post,koning2024continuous, chugg2025admissibility} as well as other areas of nonasymptotic inference; see, e.g.,~\citet{gauthier2025backward,gauthier2025values,gauthier2026adaptive} for applications in conformal prediction. In this work we extend this potential to the asymptotic setting. We are motivated by the wide applicability of asymptotic procedures, and seek to provide an alternative that is just as flexible while also providing post-hoc guarantees. 

To set the stage more formally, let us focus on confidence intervals. Suppose that data $X_1,\dots,X_n$ are drawn from some unknown distribution $P$ and there is a real-valued parameter $\theta \equiv \theta(P)$ that we wish to estimate. Given a significance level $\alpha\in(0,1)$, an asymptotic confidence interval for $\theta$ is a random set $\calC_n(\alpha)$ which covers $\theta$ with probability at most $\alpha$ in the limit. That is: 
\begin{equation}
\label{eq:intro-guarantee}
\text{For all }\alpha\in(0,1), \quad \limsup_{n\to\infty}P( \theta \notin \calC_n(\alpha)) \leq \alpha.           
\end{equation}
Note that $\alpha$ is assumed to be a constant, and it is for this reason that it must be chosen prior to doing any analysis. The following example illustrates how choosing $\alpha$ prior to analysis can lead to data being wasted on inconclusive results.  

\begin{tcolorbox}[
breakable, 
sharp corners, 
colback=pink!10,      
]
In epidemiology and immunology, confidence intervals are often built for the \emph{rate ratio} of a disease or a vaccine, the ratio of adverse cases to some comparator class within an appropriate time window~\citep{schouten1993risk,symons2002hazard}. The estimate of the rate ratio may suggest different actions; if too high for a vaccine then we might stop roll out, for example. 
Suppose a practitioner produces the confidence interval $\calC_n(0.01)$, designed to contain the true rate ratio with 99\% probability. 
If the interval is too wide for easy interpretation, it is tempting to compute $\calC_n(0.05)$, $\calC_n(0.1)$, and so on, in order to be able to provide some sort of public guidance. 
Unfortunately, most current approaches to constructing confidence intervals do not allow for such data-driven decisions and recalculating $\calC_n$ in this way will result in violations of the guarantee in \eqref{eq:intro-guarantee}. 
\end{tcolorbox}

The inability of classical confidence intervals to handle post-hoc significance levels is well-known. 
Selecting $\alpha$ as a function of the data has been dubbed the ``problem of roving alphas''~\citep{goodman1993p} and has been observed to be a problem in various disciplines, including medicine, psychology, business, and marketing (see \citealp{hubbard2003p,dar1994misuse,hubbard2003confusion,goodman1999toward}). 

Among analysts who are conscious of the problem of roving alphas, a common solution is ``$\alpha$-spending''~\citep{demets1994interim}, part of a broader class of what are often called \emph{group sequential} methods. Covid-19 monitoring was carried out using $\alpha$ spending, for instance~\citep{FDA_CBER_2021_COVID19_Vaccine_Safety_Master_Protocol}.  
The idea is to fix some global $\alpha$ and perform the initial analysis with some  $\alpha_1<\alpha$. If needed, one does a second analysis with $\alpha_2 < \alpha - \alpha_1$, then a third with $\alpha_3 < \alpha - (\alpha_1 + \alpha_2)$, and so on. (Here $\alpha_1,\alpha_2,\dots$ and indeed $\alpha$ must be predetermined.) Budgeting this way has clear drawbacks, however. Each analysis has significantly lower power  than it would if using the original $\alpha$, and we are still limited to only a handful preplanned of downstream analyses.  Note that $\alpha$-spending can be viewed as a form of Bonferroni correction on a collection of independent statistical analyses.

In this work we investigate and extend an alternative approach to $\alpha$ spending. The methodology to be presented allows for an arbitrary number of downstream analyses to be run---no partitioning of $\alpha$ is needed. The main difference between this approach and classical approaches is that the kind of guarantee we seek differs from~\eqref{eq:intro-guarantee}. Instead of bounding probability of error, we bound a particular \emph{risk} (to be defined) for many values of $\alpha$ simultaneously. This follows recent trends in post-hoc hypothesis testing~\citep{grunwald2024beyond,koning2025post,koning2024continuous,chugg2025admissibility} and conformal risk control~\citep{angelopoulos2024conformal}. 

There are both benefits and drawbacks to considering risk control instead of error probability. Our goal here is not to advocate for either side, but instead to develop the theoretical foundations of post-hoc asymptotic inference based on risk control as a complementary tool in the statistical toolbox. 

This paper will study asymptotics both in the pointwise setting and the distribution-uniform setting~\citep{li1989honest}, the latter requiring the asymptotics to be uniformly valid over a class of distributions. We will introduce distribution-uniformity more formally in Section~\ref{sec:dist-uniform}, but let us briefly make use of the guarantee in~\eqref{eq:intro-guarantee} 
to discuss the concept informally. In a distribution-pointwise statistical guarantee, no matter how large $n$ is, there may be \emph{some} distribution $P_n$ such that $P_n(\theta\notin \calC_n(\alpha))$ is still far away from $\alpha$ (and in some cases, arbitrarily close to 1). In a distribution-uniform guarantee, by contrast, it is required that asymptotic statements hold simultaneously for all $P$ in some class $\calP$ (mathematically, a supremum is taken over $\calP$ inside the the limit superior; see Definition~\ref{def:du-asymp-post-hoc-cs}). Such uniformity prohibits the aforementioned anomalous behavior.

\subsection{Contributions and outline}

After defining asymptotic post-hoc confidence intervals (\aphcis) and asymptotic post-hoc p-values (\aphpvals) in Sections~\ref{sec:asymp-post-hoc-cs} and~\ref{sec:dist-uniform},  our main contributions are as follows: 
\begin{enumerate}
    \item Section~\ref{sec:aphci-via-evalue} gives a general construction for deriving \aphcis and \aphpvals from asymptotic e-variables. 
    In fact, Proposition~\ref{prop:cs-via-evalue} shows that every \aphci which is monotonic and left-continuous (two natural desiderata that are often required for CIs) must be based on asymptotic e-variables. We also show that \aphpvals must be the inverses of asymptotic e-variables, thereby extending a fundamental result about nonasymptotic post-hoc inference \citep[Theorem 2]{koning2024continuous} to the asymptotic regime. 
    \item 
    Section~\ref{sec:iwr-evariable} begins with an analysis of the so-called \iwr asymptotic e-variable introduced by \citet{ignatiadis2024asymptotic}. 
    We weaken the conditions under which it holds in the pointwise setting, and prove that it remains an asymptotic e-variable under a  uniform third-moment assumption in the distribution-uniform setting; see Theorem~\ref{thm:iwr-asymp-evar}. 
    Using two distinct methods of choosing the free parameters in the \iwr e-variable---ex ante anchoring (Section~\ref{sec:ex-ante-tuning}) and the method of mixtures (Section~\ref{sec:aphcis-via-mom})---we give two \aphcis based on \iwr. These are stated as Theorems~\ref{thm:iwr-aphci} and \ref{thm:iwr-mixture-aphci}. 
    \item 
    Section~\ref{sec:rws-evariable} then provides a different method for constructing asymptotic e-variables and \aphcis. We introduce the so-called \rws asymptotic e-variable, which has the benefit of holding in the distribution-uniform setting under a $2+\delta$ moment assumption, for any $\delta>0$. While the resulting \aphci is somewhat looser than those based on the \iwr e-variable, we show that it is in fact providing a stronger guarantee. In particular, it is an example of a post-hoc asymptotic \emph{confidence sequence} (\aphcs), and the \rws e-variable is an asymptotic \emph{e-process}. These objects, introduced formally in Section~\ref{sec:asymp-eprocesses}, can be seen as extensions of asymptotic inference to both the post-hoc and time-uniform settings, thereby combining two recent strands of mathematical statistics.  
\end{enumerate}

Section~\ref{sec:simulations} contains experimental comparisons of the \aphcis, and Appendix~\ref{app:additional-results} provides additional theoretical results. In particular, Appendices~\ref{app:alternative-evars} and \ref{app:reg-aphci} contain additional asymptotic e-variables and \aphcis not central to our story here but still possibly of interest.

While the contributions of this work are largely theoretical, our goal is to improve the workflow of practicing statisticians. We hope to lay the foundations for a practical mode of inference which provides significantly more flexibility than traditional statistical technology.  With this in mind, the final section of the paper (Section~\ref{sec:summary}) lays out our recommendations for which of the post-hoc CIs that we introduce are best used in practice. Python implementations of all methods presented in this paper are available at \url{https://github.com/bchugg/asymp-posthoc-confint}. 

\subsection{Related work}

E-values, which are derived by analyzing expectations under the null instead of tail probabilities, provide an alternative approach to obtaining valid inference in repeated analyses;  see~\citet{ramdas2024hypothesis} and \citet{ramdas2023game} for broad introductions, and~\cite{shafer2021testing,grunwald2024safe,waudby2024estimating} for discussion papers that elaborate on connections to betting.  Our particular interest in e-values is their post-hoc validity, which as we will show allows the use of data-dependent error thresholds.

The link between e-values and data-dependent thresholds was forged by \citet{katsevich2020simultaneous} in the context of false discovery rate (FDR) control. These authors worked with  nonnegative supermartingales; note that nonnegative supermartingales yield e-values at stopping times, a fact that follows from Doob's optional stopping theorem. The formal connection between e-values and FDR control for data-dependent thresholds was presented by \citet[][Lemma 1]{wang2022false}. Setting the number of hypotheses to one specializes multiple testing to just testing a single hypothesis, and reduces FDR to the standard Type-I error. As such their Lemma 1 provides the first proof of the post-hoc error control delivered by thresholding e-values.

Analogous results to those of FDR control were presented in the setting of confidence intervals by \cite{xu2024post}.  This work focused on controlling the false coverage rate (FCR) using \emph{e-confidence intervals}.\footnote{Note that their results for the post-hoc validity of FCR also apply to the standard miscoverage rate if there is only one interval in question, and also that similar comments apply when FDR is when specialized to a single hypothesis, which yields classical Type-I error control. 
}
In a similar vein, \citet{grunwald2024beyond} studied post-hoc hypothesis testing with the goal of generalizing the Neyman-Pearson paradigm to allow post-hoc significance levels. This work establishes in particular that e-CIs are post-hoc valid (as we will formally describe below). 

Since then, a number of authors have noted that e-values catalyze a variety of kinds of data-adaptive decision-making. 
For example, \citet{xu2025bringing} and \citet{fischer2025knockoffs} employ e-values for post-selection inference and FDR control in multiple testing, allowing the nominal FDR level to be data-dependent. While these past works have emphasized that e-values yield ``post-hoc p-values,''  \citet{koning2025post,koning2024continuous}  proves the converse that every ``post-hoc p-value'' is the inverse of an e-value. 
\citet{grunwald2023posterior} develops the ``e-posterior'' which serves as a frequentist alternative to the Bayesian posterior using e-values while also allowing for post-hoc loss functions. 

\citet{gauthier2025values} consider post-hoc confidence intervals in the setting of conformal inference, developing adaptive coverage policies which strategically change the significance level to obtain conformal sets that are optimized under  constraints on their size~\citep{gauthier2026adaptive,gauthier2025backward}. \citet{chugg2025admissibility} build on the research program of \citet{grunwald2024beyond} and investigate admissibility in post-hoc hypothesis testing.

For a general overview of the post-hoc power of e-values, see \citet[Chapter 4]{ramdas2024hypothesis}. 

\subsection{Notation and background}
\label{sec:background}
Fix a measurable space $(\Omega, \calF)$.
Throughout, we let $\calP$ denote a set of distributions over real-valued random variables so that $(\Omega, \calF, P)$ is a probability space for each $\Pin$. For a random variable $X_1$ on $(\Omega, \calF)$ and some real $q \geq 1$, we let $\calM_q(\theta)$ be the set of distributions on $\Re$ with mean $\E_P[X_1] = \theta$ and a finite $q$-th moment (i.e., $\E_P|X_1-\theta|^q<\infty$). Let $\calM_q = \cup_{\theta \in\Re}\calM_q(\theta)$ be the set of all distributions with finite $q$-th moment. 

Recall that a sequence of random variables $(X_n)_{n \geq 1}$ on $(\Omega, \calF)$ are said to be uniformly integrable with respect to $\calP$ if
\begin{equation}
  \sup_\Pin\lim_{K\to\infty} \sup_{n \in \NN} \E_{P} [X_n \ind{X_n \geq K}]=0. 
\end{equation}
Note that this is a pointwise (in $\calP$) notion of uniform integrability. We will also make use of a distribution-uniform version. In particular, 
we call $(X_n)_{n \geq 1}$ $\calP$-uniformly integrable if 
\begin{equation}
    \lim_{K\to\infty} \sup_{n \in \NN} \sup_{P\in\calP}\E_{P} [X_n \ind{X_n \geq K}]=0. 
\end{equation}
Also important in the distribution-uniform setting will be the notion of uniform moment control. We say that a random variable $X$ has a $\calP$-uniformly bounded $q$-th moment if 
\begin{equation}
    \sup_{P\in\calP} \E_P|X|^q<\infty. 
\end{equation}
For each $n \in \NN$, let $\calF_n:=\sigma(X_1,\dots,X_n)$ be the $\sigma$-field generated by the first $n$ observations.
We will often abbreviate a sequence $(A_n)_{n\geq 1}$ as $(A_n)$, and shorthand the first $n$ elements $(A_1,\dots,A_n)$ as $A^n$. The index set of a sequence $(A_n)$ should always be assumed to be the natural numbers $\NN$.

Upon observing $X^n\sim P$ for some unknown $\Pin$, we are interested in estimating $\theta = \theta(P)$ for some real-valued functional $\theta:\calP \to \Re$ (e.g., the mean $\theta(P) = \int X\d P$). Define the parameter space $\Theta = \{ \theta(P): P\in\calP\}$. 

An e-variable for $\calP$ is a nonnegative random variable $E$ such that $\sup_{P\in\calP}\E_P[E]\leq 1$. An e-value is the realization of an e-variable, though in this work will use the two terms interchangeably (just as p-variables and p-values are typically used interchangeably). The majority of our results will rely on \emph{asymptotic} e-values: sequences of nonnegative random variables which are an e-value in the limit. These will be introduced more formally in Section~\ref{sec:asymp-post-hoc-cs}.

\section{Preliminaries}
\label{sec:prelims}

\subsection{Post-hoc confidence intervals and p-values}
\label{sec:post-hoc-CIs}

Before introducing asymptotic post-hoc inference, let us first describe nonasymptotic post-hoc inference procedures to help motivate the definitions of their asymptotic analogues to come. 

Given a set of distributions $\calP$ and a desired type-I error $\alpha \in (0, 1)$, an interval $\calC(\alpha)$ is said to be a $(1-\alpha)$-confidence interval for the functional $\theta$ if it satisfies 
\begin{equation}
\label{eq:type-I-error}
 \sup_{P\in\calP} P(\theta(P) \notin \calC(\alpha))\leq \alpha,
\end{equation}
whereas a p-value $Q$ satisfies 
\begin{equation}
\label{eq:pval} 
\text{for all } \alpha \in (0, 1),\quad \sup_{P\in\calP} P(Q \leq \alpha) \leq \alpha.
\end{equation}
Moving towards the definition of post-hoc confidence intervals and p-values, notice that the conditions in~\eqref{eq:type-I-error} and \eqref{eq:pval} are equivalent to writing
\begin{equation}
\label{eq:nonasymp-cs-as-risk}
    \sup_{\alpha\in(0,1)}\sup_{P\in\calP} \E_P\left[\frac{\ind{\theta(P) \notin \calC_n(\alpha)}}{\alpha}\right] \leq 1 \quad\text{and}\quad \sup_{\alpha\in(0,1)}\sup_{P\in\calP} \E_P\left[\frac{\ind{Q \leq \alpha}}{\alpha}\right] \leq 1,
\end{equation}
respectively.
Importantly, in \eqref{eq:nonasymp-cs-as-risk}, $\alpha$ is fixed before the expectation---it is data-independent. 
With \eqref{eq:nonasymp-cs-as-risk} in mind, the definitions of post-hoc confidence intervals and p-values with data-dependent $\alpha$ values can be intuited. These definitions have appeared in the literature; see \citet[Corollary 1]{xu2024post},  \citet[Section 3]{grunwald2024beyond} and \citet[Definition 2]{koning2025post}. 

\begin{definition}[Post-hoc confidence intervals and p-values]
    \label{def:nonasymp-posthoc-cs}
    We say that a family of random sets $\calH\equiv \{\calH(\alpha)\}_{\alpha>0}$ is a \emph{post-hoc confidence interval}  for $\theta$ and $\calP$ if 
    \begin{equation}
    \label{eq:nonasymp-post-hoc-cs}
    \sup_{P\in\calP} \risk(\calH;\theta,P)\leq 1 \text{~ where ~}
        \risk(\calH; \theta, P) :=  \E_{P} \left[\sup_{\alpha>0} \frac{\ind{\theta(P) \notin \calH(\alpha)}}{\alpha}\right].
    \end{equation}
    We say that a nonnegative random variable $Q$ is a \emph{post-hoc p-value} for $\calP$ if 
	\begin{equation}
    \label{eq:nonasymp-post-hoc-pval}
    \sup_{P\in \calP} \prisk(Q;P)\leq 1 \text{~ where ~}
		\prisk(Q;P):= \E_P \left[\sup_{\alpha>0} \frac{\ind{Q\leq \alpha}}{\alpha}\right]\leq 1. 
	\end{equation}
\end{definition}
In the above definition and the text preceding it, we have used the symbols ``$\calH$'' and ``$\calC$'' to refer to post-hoc and non-post-hoc confidence sets, respectively, and we will maintain this convention throughout the paper. We refer to both $\risk$ and $\prisk$ as \emph{risk} and refer to the guarantees~\eqref{eq:nonasymp-post-hoc-cs} and~\eqref{eq:nonasymp-post-hoc-pval} as \emph{risk control}. 
We will typically drop the ``for $\theta$ and $\calP$'' phrase from the definition unless there is ambiguity. If $\calH$ 
is a post-hoc confidence set then we also refer to it as \emph{post-hoc valid}, and similarly for post-hoc valid p-values. 
We often write $\risk(\calH)$ in place of $\risk(\calH; \theta, P)$ if $\theta$ and $P$ are understood from context. 

\begin{remark}[On the range of $\alpha$ extending beyond 1]
\label{rem:alpha-range}
    When moving from~\eqref{eq:nonasymp-cs-as-risk} to~\eqref{eq:nonasymp-post-hoc-cs} we not only moved $\alpha$ inside the expectation, but also allowed it to take values larger than one. This is only for mathematical convenience later on and does not change the guarantee in any meaningful way. Indeed, for any $\alpha>1$, $\ind{\theta(P)\notin\calH(\alpha)}/\alpha\leq 1$, hence such an $\alpha$ can never break risk control. 
\end{remark}

It is worth pondering the distinction between controlling the risk versus the error probability, noting in particular that the inequalities in \eqref{eq:nonasymp-post-hoc-cs} and \eqref{eq:nonasymp-post-hoc-pval} provide fundamentally different guarantees than those in \eqref{eq:type-I-error} and \eqref{eq:pval}.  
 Consider the case of post-hoc confidence intervals. 
If we design $k$ PH-CIs based on $k$ independent experiments, we should not expect that $\theta$ is contained in our sets at least $(1-\alpha)$\% of the time as $k\to\infty$. Instead, we should expect that for any $\alpha$, the average of $\ind{\theta(P)\not\in \calH_n(\alpha)}/\alpha$ over all trials should be at most one (as $k\to\infty$).

\begin{remark}[From post-hoc risk to type-I error for data-independent $\alpha$ values]
    If $\alpha$ is not data-dependent, then the post-hoc guarantees in~\eqref{eq:nonasymp-post-hoc-cs} and~\eqref{eq:nonasymp-post-hoc-pval} reduce to the usual type-I error guarantees in~\eqref{eq:type-I-error} and~\eqref{eq:pval}. This will be the case for all post-hoc guarantees studied throughout this paper. 
\end{remark}

Given that risk control is different than type-I error control in general, it is natural to wonder what utility the former provides the practitioner. After all, practitioners are familiar with error probabilities---can we expect them to switch to using risk instead? Our view is that such a switch is natural in settings which require post-hoc decision-making \emph{and} frequentist-style error guarantees. While Bayesian decision theory allows for post-hoc decision-making (this is well known, but see~\citet[Appendix C]{chugg2025admissibility} for a formal statement and proof), it notably does not provide bounds on the long-run error, while Definition~\ref{def:nonasymp-posthoc-cs} is designed to do just that. This, coupled with the fact that the theory of post-hoc risk control turns out to be quite rich, leads us to believe that fleshing out benefits and drawbacks of (asymptotic) post-hoc p-values and post-hoc CIs is worthwhile.

\subsection{Asymptotic post-hoc CIs  and p-values}
\label{sec:asymp-post-hoc-cs}

With the definition of post-hoc p-values and post-hoc CIs in hand, let us now introduce their asymptotic counterparts. With the notation introduced in Section~\ref{sec:background}, the guarantee stated in the introduction for an asymptotic confidence interval can be written more precisely as 
\begin{equation}
\label{eq:asymp-cs}
\text{For all }\alpha\in(0,1), \quad \sup_{P\in\calP}\limsup_{n\to\infty}P( \theta(P) \notin \calC_n(\alpha)) \leq \alpha.
\end{equation}
Following a similar argument to that of the previous section, we rewrite~\eqref{eq:asymp-cs} as a bound on the expected value, which then suggests the following definitions of asymptotic post-hoc confidence intervals and p-values.  

\begin{definition}[Asymptotic post-hoc confidence intervals and p-values]
    \label{def:asymp-post-hoc-cs}
    We say that a sequence $\calH\equiv (\calH_n)_{n\geq 1}$ is an \emph{asymptotic, post-hoc confidence interval} (\aphci)  for $\theta$ under $\calP$ if 
    \begin{equation}
    \label{eq:asymp-post-hoc-cs}
        \sup_{P\in\calP} \limsup_{n\to\infty} \risk(\calH_n;\theta,P) \leq 1. 
    \end{equation}
    We say that a sequence of random variables $(Q_n)_{n\geq 1}$ define an \emph{asymptotic post-hoc p-value} (\aphpval) for $\calP$ if 
	\begin{equation}
    \label{eq:asymp-posthoc-pval}
        \sup_{P\in\calP} \limsup_{n\to\infty} \prisk(Q_n;P)\leq 1. 
	\end{equation}
\end{definition}

In other words, just as an asymptotic confidence interval requires that type-I error is bounded in the limit, an \aphci requires that the post-hoc risk is bounded in the limit.

Note that the supremum over $\calP$ is outside the limit in both~\eqref{eq:asymp-post-hoc-cs} and~\eqref{eq:asymp-posthoc-pval}. This implies that the limit supremum is upper-bounded \emph{pointwise} for each distribution. 
One might instead hope for \emph{uniform} boundedness of the limit supremum, which would correspond to moving the supremum over $P \in \calP$ inside the limit. We introduce such ``distribution-uniform'' \aphcis and \aphpvals next.

\subsection{Distribution-uniform asymptotic post-hoc CIs and p-values}
\label{sec:dist-uniform}

Suppose we have developed an \aphci $\calH$ for estimating the mean $\mu(P)$ of a Gaussian distribution $P$. Let $P_{\sigma}$ be the distribution of a Gaussian random variable centered at zero with variance $\sigma^2 > 0$ and consider the family of distributions $\calP = \{P_{\sigma}: \sigma \in (0,\infty)\}$. Suppose for the sake of example that $\risk(\calH_n,P_\sigma) \asymp \sigma/n + 1$. Then for each fixed $\sigma$, $\limsup_n \risk(\calH_n,P_\sigma) =1$, so $\calH$ is indeed an \aphci. Nevertheless, for any $n$, $\sup_{\sigma > 0} \risk(\calH_n, P_\sigma) = \infty$, and hence
\[
\limsup_{n \to \infty} \sup_{P\in \calP} \risk(\calH_n,P)  =\infty.
\]
The above possibility motivates extending the definitions of \aphcis and \aphpvals to the \emph{distribution-uniform} setting, in which  
the limit holds simultaneously for all distributions in $\calP$. 
Distribution-uniform limit theorems have been studied since at least \citet{chung1951strong}, but the 
the study of distribution-uniform asymptotic confidence intervals in particular was initiated by \citet{li1989honest}, who called them ``honest'' confidence intervals. 
Since then, distribution-uniform methods have been studied in a wide array of settings, with a notable surge in momentum in the last decade~\citep{tibshirani2018uniform,rinaldo2019bootstrapping,shah2020hardness,waudby2023distribution,waudby2024distribution,waudby2025nonasymptotic}. 

Mathematically, extending the definition of \aphcis to the distribution-uniform setting is straightforward; we simply swap the limit and the supremum in Definition~\ref{def:asymp-post-hoc-cs}. Recall the definitions of $\risk$ and $\prisk$ from Section~\ref{sec:post-hoc-CIs}. We note that distribution-uniform p-values (though not post-hoc p-values) were studied recently by \citet{ignatiadis2024asymptotic}.

\begin{definition}[Distribution-uniform \aphcis and \aphpvals]
    \label{def:du-asymp-post-hoc-cs}
    Fix a family of distributions $\calP$. For each $\Pin$, let $\theta(P) \in \Re$ be a parameter to be thought of as the target estimand of the \aphci under $P$. We say that a sequence $\calH\equiv (\calH_n)_{n\geq 1}$ is a \emph{distribution-uniform \aphci} for $\theta \equiv (\theta(P))_\Pin$ under $\calP$ (or a \emph{$\calP$-uniform \aphci for $\theta$}) if 
    \begin{equation}
    \label{eq:du-asymp-post-hoc-cs}
         \limsup_{n\to\infty} \sup_{P\in\calP}  \risk(\calH_n;\theta(P),P) \leq 1.
    \end{equation}
    We say that a sequence of random variables $(Q_n)_{n\geq 1}$ is a \emph{distribution-uniform} \aphpval \emph{for} $\calP$ (or, equivalently, a $\calP$-uniform \aphpval) if 
	\begin{equation}
    \label{eq:du-asymp-posthoc-pval}	
        \limsup_{n\to\infty} \sup_{P\in\calP} \prisk(Q_n;P)\leq 1. 
	\end{equation}
\end{definition}

We will often shorten ``distribution-uniform'' to simply ``uniform.'' The distribution $\calP$ will usually be understood from context. If we do not say explicitly that an \aphci or \aphpval is uniform, it should be assumed to hold only pointwise. When we discuss $\calP$-uniform \aphcis going forward, we may simply refer to $\theta$ as ``the estimand'', but this should be thought of as an implicit collection of $\calP$-indexed estimands $(\theta_P)_{P \in \calP}$.

Next we study how to construct uniform \aphcis using uniform asymptotic e-variables.

\subsection{The sufficiency and necessity of asymptotic e-values}
\label{sec:aphci-via-evalue}

Let us now return to the question of how to construct \aphcis. Just as post-hoc confidence intervals can be designed with e-values---as shown  by \citet{xu2024post} and \citet{grunwald2024beyond}---we may design asymptotic post-hoc confidence intervals using \emph{asymptotic e-values}. 
Following Definition 3.5 and Proposition 3.6 in \citet{ignatiadis2024asymptotic}, an asymptotic e-value for $\calP$ is a sequence of nonnegative random variables $(E_n)_{n\geq 1}$ whose components satisfy 
\begin{equation}
\label{eq:asymp-evar}
 \sup_{P\in\calP}\limsup_{n\to\infty} \E_{P}[E_n]\leq 1.   
\end{equation}
(The authors referred to these as \emph{strongly} asymptotic e-variables; we omit the term strongly for brevity.) If the inequality is an equality then we call $(E_n)$ a \emph{sharp} asymptotic e-variable. 

This definition of an asymptotic e-variable is again a pointwise notion vis-a-vis $\calP$. To  construct distribution-uniform \aphcis and \aphpvals, we will need to rely on \emph{distribution-uniform} asymptotic e-variables, defined as a sequence of nonnegative random variables $(E_n)$ such that 
\begin{equation}
\label{eq:du-asymp-evar}
 \limsup_{n\to\infty} \sup_{P\in\calP}\E_{P}[E_n]\leq 1.   
\end{equation}
We will typically call such a sequence a $\calP$-uniform asymptotic e-variable to make the set of distributions $\calP$ explicit.  
These objects were also introduced by \citet{ignatiadis2024asymptotic}, who called them strongly uniform asymptotic e-variables. Again, we drop the ``strongly.''

If either~\eqref{eq:asymp-evar} or \eqref{eq:du-asymp-evar} hold with equality, we call the (uniform) asymptotic e-variables \emph{sharp}. 
If $\calP_\theta$ is the set of all distributions in some class with parameter $\theta$, we will say that $(E_n)$ is an asymptotic e-variable for $\theta$ if it is an asymptotic e-variable for $\calP_\theta$. Similarly, we call it a uniformly asymptotic e-variable for $\theta$ if it is a $\cal_\theta$-uniform asymptotic e-variable.

The following result shows that we can obtain an \aphci and an \aphpval by thresholding the components of an asymptotic e-value. In fact, this is the only way to construct (asymptotic) post-hoc confidence sets, assuming standard behavior of the sets. The proof is in Appendix~\ref{proof:cs-via-evalue} and is inspired by similar results for nonasymptotic e-values and post-hoc inference; see \citet[Section 3]{grunwald2024beyond} and \citet[Theorem 2]{koning2025post}.

\begin{proposition}
\label{prop:cs-via-evalue}
Let $\Theta$ be a set that can be thought of as the ``parameter space'' and 
let $(\mathcal{H}_n(\alpha))_{\alpha>0}$ be a family of subsets of $\Theta$ that is monotonic and right-continuous in $\alpha$, meaning:
\begin{itemize}
    \item $\calH(\alpha_2)\subseteq\calH(\alpha_1)$ for all $0<\alpha_1<\alpha_2$ (monotonicity),
    \item For every $\theta \in \Theta$, the rejection set $\{\alpha>0 : \theta \not\in\calH_n(\alpha)\}$ is a left-closed interval of the form $[\alpha_0,\infty)$ for some $\alpha_0>0$ (right-continuity).
\end{itemize}
Then the sequence $(\mathcal{H}_n)_{n \ge 1}$ is a (uniform) \aphci for $\theta^\star$ if and only if there exists a sequence of random variables $(E_n(\theta))$ for each $\theta\in\Theta$ 
such that for all $\alpha>0$, $H_n(\alpha)$ takes the form
\[
 \mathcal{H}_n(\alpha) = \left\{ \theta : E_n(\theta) < 1/\alpha \right\},
\]
and so that $(E_n(\theta^\star))$ is a (uniform) asymptotic e-variable.  
Furthermore, a sequence of random variables $(Q_n)_{n \ge 1}$ is a (uniform) \aphpval if and only if there exists some (uniform) asymptotic e-variable $(E_n)_{n \geq 1}$ so that $Q_n = 1/E_n$ for each $n \in \NN$.
\end{proposition}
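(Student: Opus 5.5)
The whole argument rests on a pointwise identity that turns the post-hoc risk into the expectation of a single random variable. Both directions of both claims then follow from the definitions of (uniform) asymptotic e-variables and of $\risk$ and $\prisk$.

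For the confidence-interval part, suppose first that $\calH_n(\alpha)=\{\theta: E_n(\theta)<1/\alpha\}$. Then $\theta^\star\notin\calH_n(\alpha)$ if and only if $E_n(\theta^\star)\ge 1/\alpha$, that is, $\alpha\ge 1/E_n(\theta^\star)$. This gives the deterministic identity
\[
\sup_{\alpha>0}\frac{\ind{\theta^\star\notin\calH_n(\alpha)}}{\alpha}=\sup_{\alpha\ge 1/E_n(\theta^\star)}\frac1\alpha = E_n(\theta^\star).
\]
The supremum is attained at $\alpha=1/E_n(\theta^\star)$ because the inequality is non-strict. When $E_n=0$ the rejection set is empty and both sides are $0$; the case $E_n=\infty$ is handled by convention. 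Taking $\E_P$ gives $\risk(\calH_n;\theta^\star,P)=\E_P[E_n(\theta^\star)]$. Hence the pointwise condition~\eqref{eq:asymp-post-hoc-cs} is literally~\eqref{eq:asymp-evar}, and the uniform condition~\eqref{eq:du-asymp-post-hoc-cs} is literally~\eqref{eq:du-asymp-evar}.

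Conversely, given a monotone, right-continuous family $\calH_n$, I will construct $E_n(\theta)$ directly. The right-continuity hypothesis says the rejection set is $[\alpha_0(\theta),\infty)$ with $\alpha_0(\theta)>0$. I set $E_n(\theta):=1/\alpha_0(\theta)$, with the convention $E_n(\theta)=0$ if $\theta$ is never rejected; monotonicity makes the rejection set an up-set, which also covers that case. Then $\theta\in\calH_n(\alpha)$ if and only if $\alpha<\alpha_0(\theta)$, if and only if $E_n(\theta)<1/\alpha$, which is exactly the required form. The identity above then shows that $(E_n(\theta^\star))$ is a (uniform) asymptotic e-variable whenever $\calH$ is a (uniform) \aphci.

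The p-value part works the same way with $\ind{Q_n\le\alpha}$ in place of the miscoverage indicator. I will show that $\sup_{\alpha>0}\ind{Q_n\le\alpha}/\alpha=1/Q_n$, since the supremum is attained at $\alpha=Q_n$, with the convention $1/0=\infty$. This gives $\prisk(Q_n;P)=\E_P[1/Q_n]$. So $(Q_n)$ is a (uniform) \aphpval if and only if $E_n:=1/Q_n$ is a (uniform) asymptotic e-variable. The reverse direction simply sets $Q_n=1/E_n$.

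The main obstacle is bookkeeping rather than analysis. I have to make sure the closed-interval form of the rejection set makes the supremum attained, so that the identity is exact and not just an inequality, and I have to handle the edge cases $E_n\in\{0,\infty\}$ and $Q_n=0$. I also need measurability of $E_n(\theta)=1/\alpha_0(\theta)$. I will get this by noting that $\{E_n(\theta)\ge c\}=\{\theta\notin\calH_n(1/c)\}$ for $c>0$, and these events are measurable because the risk $\risk$ is assumed to be well defined.
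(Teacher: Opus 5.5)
Your proposal is correct and follows the paper's proof. The forward direction rests on the same pointwise identity $\sup_{\alpha>0}\ind{\theta^\star\notin\calH_n(\alpha)}/\alpha=E_n(\theta^\star)$, and your $E_n(\theta)=1/\alpha_0(\theta)$ is exactly the paper's $\sup_{\alpha>0}\ind{\theta\notin\calH_n(\alpha)}/\alpha$, with your one-line equivalence $\theta\in\calH_n(\alpha)\iff\alpha<\alpha_0(\theta)$ replacing the paper's three-case analysis; you also spell out the p-value identity $\sup_{\alpha>0}\ind{Q_n\le\alpha}/\alpha=1/Q_n$, which the paper's written proof leaves implicit.
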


Monotonicity and right-continuity are natural (and often assumed) desiderata for confidence intervals. The former implies that smaller values of $\alpha$ yield smaller intervals, while the latter implies nesting. That is, if $\theta$ is rejected for a sequence of levels approaching $\alpha$ from above, it is also rejected at $\alpha$. 

The ``if and only if'' in Proposition~\ref{prop:cs-via-evalue} holds only if $\alpha$ is allowed to take values larger than one in Definition~\ref{def:nonasymp-posthoc-cs}. Indeed, if $\alpha$ is restricted to $(0,1)$ then $\{\theta: E_n(\theta)<1/\alpha\}$ is an \aphci for $\theta^\star$ if there exists some family of processes $\{(E_n(\theta)):\theta\in\Theta\}$ such that $(E_n(\theta^\star))$ is an asymptotic e-variable for $\theta^\star$, but the converse does not necessarily hold.

\begin{remark}
\label{rem:alpha-dependence}
Proposition~\ref{prop:cs-via-evalue} implicitly assumes that the asymptotic e-value is not a function of $\alpha$. E-values with such a dependence are not rare. Indeed, the ``method of predictable plug-ins'' is a common way of constructing sequential e-values (i.e., e-processes), and typically relies on choosing parameters as functions of $\alpha$~\citep{waudby2024estimating}.  
E-values that integrate out parameters in $(0,\alpha)$  have also been proposed~\citep{fischer2025sequential}. Even though the e-value itself may not depend on $\alpha$, the confidence interval $\calH_n(\alpha)$ obtained from the e-value will. 
\end{remark}

In practice, constructing an \aphci using only a single e-variable often results in one-sided intervals (unless we use the method of mixtures; see Section~\ref{sec:aphcis-via-mom}).  To obtain two-sided intervals we will take the intersection of the sets defined by multiple e-values. 
We can, however, relax the requirement that these sets be constructed by individually valid (asymptotic) e-variables and allow them instead to be (asymptotic) \emph{compound e-variables}~\citep{ignatiadis2024asymptotic}. This result is not central to our story, however, and is thereby deferred to Appendix~\ref{app:aphcis-compound-evals}. 

Proving that sequences of random variables are pointwise asymptotic e-variables typically involves arguments concerning convergence in distribution combined with uniform integrability. By a result of \citet{billingsley1995proba}, this implies convergence of means (not \emph{in} mean). In the distribution-uniform setting, we will develop and rely on a proposition found in Appendix~\ref{app:du-convergence-in-exp} which can be viewed as an extension of Billingsley's result. We mention it here because it may of be independent interest.

\section{Constructing Asymptotic Post-Hoc Confidence Intervals}
\label{sec:constructing-aphcis}

By Proposition~\ref{prop:cs-via-evalue}, 
constructing \aphcis relies on finding asymptotic e-variables. These are less well studied than their nonasymptotic counterparts. Indeed, it is not obvious whether non-trivial asymptotic e-values ought to exist. \citet[Theorem 6.5]{ignatiadis2024asymptotic} lay this fear to rest and show that, for iid $X_1,\dots,X_n$ with mean $\theta$ and finite variance, for any $\lambda \in \Re$
\begin{equation}
\label{eq:e-iwr} 
    E_n^\iwr(\theta;\lambda) := \exp\left(\lambda \frac{S_n(\theta)}{V_n(\theta)} - \frac{\lambda^2}{2}\right)
\end{equation}
is an asymptotic $e$-value
where
\begin{equation}
\label{eq:Sn-Vn}
  S_n(\theta) := \sum_{i\leq n}(X_i - \theta)\text{~ and ~} V_n(\theta) := \bigg(\sum_{i\leq n} (X_i - \theta)^2\bigg)^{1/2}.  
\end{equation}
The superscript \iwr refers to the names Ignatiadis, Wang, and Ramdas~\citep{ignatiadis2024asymptotic}. 
In Section~\ref{sec:iwr-evariable} we will extend this asymptotic e-variable in several ways. First, we observe that it holds not only for all distributions with finite variance, but for those distributions in the domain of attraction of a Gaussian (a weaker condition). Second, we will prove that $(E_n^\iwr)$ is a uniform asymptotic e-variable under a uniformly bounded third-moment assumption. 
Theorem~\ref{thm:iwr-aphci} then presents the 
\aphci defined by $(E_n^\iwr(\theta;\lambda))$. 

Sections~\ref{sec:ex-ante-tuning} and \ref{sec:aphcis-via-mom} explore different ways to choose $\lambda$ to optimize the width of the \aphci. This is more involved than the nonasymptotic case, both because $\lambda$ cannot depend on $\alpha$ (Remark~\ref{rem:alpha-dependence}) and because of the requirement of uniform integrability. While these two sections focus on $E_n^\iwr$ in particular, the principles discussed therein apply more broadly---for instance, to the two additional \aphcis we present in Appendix~\ref{app:alternative-evars}.  

Section~\ref{sec:rws-evariable} provides another asymptotic e-variable using entirely different arguments than are used for the \iwr e-variable. It is based on a truncation technique coupled with a recent nonasymptotic SLLN given by \citet{ruf2025concentration}. The resulting \aphci has pros and cons compared to the \iwr \aphcis; we will discuss these tradeoffs in more depth in the discussions that follow.

\subsection{The IWR asymptotic e-variable and APH-CI}
\label{sec:iwr-evariable}

Throughout this section, let $\sigmahat_n^2 := (n-1)^{-1} \sum_{i\leq n}(X_i - \Xbar_n)^2$ denote the unbiased sample variance and let $\shat_n^2 = n^{-1}\sum_{i\leq n} (X_i - \Xbar_n)^2$ denote the biased sample variance where $\Xbar_n = n^{-1}\sum_{i\leq n}X_i$ is the sample mean.

To state our results for $E_n^\iwr$ we need to introduce several distributional assumptions. 
Recall that for $q \geq 1$, the set $\calM_q(\theta)$ denotes those distributions on $\Re$ with a finite $q$-th moment and mean $\theta$, and $\calM_q$ is the set of all distributions with finite $q$-th moment. 

We say that $(X_n)$ are in the domain of attraction of a Gaussian if there exist sequences $(a_n)$ and $(b_n)$ such that 
\begin{equation}
\label{eq:dan}
    \frac{\sum_{i\leq n}X_i - b_n}{a_n} \xrightarrow{d} N(0,1).  
\end{equation}
If the distribution has finite variance $\sigma^2$ then \eqref{eq:dan} holds with $b_n = n\E[X_1]$ and $a_n = \sigma\sqrt{n}$. But~\eqref{eq:dan} can also be satisfied for distributions without a finite second moment, such a Pareto distributions with shape 2.   See \citet{mikosch1999regular} for more details on domains of attraction. We let $\dan(\theta)$ be all distributions with mean $\theta$ that are in the domain of a attraction of a Gaussian.

Our distribution-uniform result for the \iwr e-variable will require uniformly bounded moments. Specifically, we will make use of a  $\calP$-bounded ``skew,'' meaning that
\begin{equation}
\label{eq:uniform-moment}
    \sup_{P\in \calP} \frac{\E_P|X_1 - \E_P X_1|^3}{\sigma_P^3}<\infty,
\end{equation}
where $\sigma_P^2$ is the variance of $X_1$ under $P$. 
Requiring uniformly bounded moments (typically more than two) is a standard assumption in distribution-uniform inference~\citep{shah2020hardness,waudby2023distribution} and is known to be necessary for uniform strong convergence results \citep{waudby2024distribution}. In our case we will require a uniformly bounded third moment/skew due to our use of the Berry-Esseen bound~\citep{bentkus1996berry}. To elaborate, the proof relies on upper bounding $S_n(\theta)/V_n(\theta)$ deterministically by $\sqrt{n}$ (using Cauchy-Schwarz), which is canceled out by the $1/\sqrt{n}$ rate of decay of the Berry-Esseen bound. This allows us to conclude that $\sup_P\E_P|S_n(\theta)/V_n(\theta)|<\infty$, which is an integral part of the proof. 

With these preliminaries in hand, we can state our result for the \iwr e-variable. In the pointwise case, the proof follows fairly easily by appealing to results of \citet{gine1997student}, which show that $S_n(\theta)/V_n(\theta)\xrightarrow{d} N(0,1)$, and that $\exp(\lambda_n S_n(\theta)/V_n(\theta))$ is uniformly integrable. 
The distribution-uniform case is significantly more involved. The details of the proof can be found in Appendix~\ref{proof:iwr-asymp-evar}.

\begin{theorem}
\label{thm:iwr-asymp-evar}    
Let $(X_n)$ be a sequence of iid random variables.
Let $(\lambda_n)$ be a sequence of nonnegative random variables which converge to some $\lambda\in\Re$ almost surely.  If $\sup_n |\lambda_n|<\infty$ almost surely and $(X_n)$ are drawn from some distribution in $\dan(\theta)$, then
    the process $(E_n^\iwr(\theta,\lambda_n))$ is a sharp asymptotic e-variable. 
    Additionally, if $(\lambda_n)$ is a deterministic sequence which converges to $\lambda$ and 
    $X_1,\dots,X_n$ have $\calP$-uniformly bounded skew for some $\calP\subseteq\dan(\theta)$, then $(E_n^\iwr(\theta;\lambda_n))$ is a $\calP$-uniform asymptotic e-variable. 
\end{theorem}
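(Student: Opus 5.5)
Write $T_n := S_n(\theta)/V_n(\theta)$ for the self-normalized sum, so that $E_n^\iwr(\theta;\lambda_n) = \exp(\lambda_n T_n - \lambda_n^2/2)$. Both parts follow the same template. First we show that $\exp(\lambda_n T_n)$ converges in distribution to $\exp(\lambda Z)$ with $Z\sim N(0,1)$. Then we supply enough (uniform) integrability to pass from convergence in distribution to convergence of means. Since $\E\exp(\lambda Z - \lambda^2/2)=1$, this gives $\lim_n \E_P[E_n^\iwr]=1$, which is sharpness.

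\textbf{Pointwise part.} For $P\in\dan(\theta)$, the result of \citet{gine1997student} gives $T_n\xrightarrow{d}N(0,1)$. Because $\lambda_n\to\lambda$ almost surely, Slutsky's lemma and the continuous mapping theorem give $\exp(\lambda_n T_n - \lambda_n^2/2)\xrightarrow{d}\exp(\lambda Z-\lambda^2/2)$.

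For uniform integrability, \citet{gine1997student} also show that $\sup_n \E\exp(t|T_n|)<\infty$ for every $t$, i.e.\ the self-normalized sum is uniformly sub-Gaussian. Let $L:=\sup_n|\lambda_n|$, which is almost surely finite.
\begin{itemize}
\item If $L$ is bounded by a constant $c$, then $E_n\le \exp(c|T_n|)$, and taking $t=2c$ in the exponential-moment bound shows that $(E_n)$ is bounded in $L^2$, hence uniformly integrable.
\item If $L$ is only almost surely finite, I would localize on the events $\{L\le c\}$. This controls $\E[E_n\ind{L\le c}]$, but the complement $\{L>c\}$ must also be handled, because the bound $E_n\le e^{L\sqrt n}$ is not uniform in $n$.
\end{itemize}
I expect this localization to be a delicate point. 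If it cannot be closed, I would strengthen the reading of the hypothesis to a deterministic bound $\sup_n|\lambda_n|\le c$, which seems to be what is intended. Given uniform integrability, Billingsley's theorem yields convergence of means, completing the pointwise part.

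\textbf{Uniform part.} Here $\lambda_n$ is deterministic and $\calP$ has bounded skew $\kappa:=\sup_P \E_P|X_1-\theta|^3/\sigma_P^3$. I would invoke the uniform analogue of Billingsley's result stated in Appendix~\ref{app:du-convergence-in-exp}. It requires two ingredients.
\begin{itemize}
\item \emph{Uniform convergence in distribution.} We need $\sup_P \sup_x |P(T_n\le x)-\Phi(x)|\to0$. The Berry--Esseen bound for self-normalized sums \citep{bentkus1996berry} gives this at rate $C\kappa/\sqrt n$, uniformly over $\calP$. Since $\lambda_n\to\lambda$ deterministically, the uniform convergence transfers to $\lambda_n T_n$ and then to $E_n$.
\item \emph{$\calP$-uniform integrability.} It suffices to bound $\sup_P\E_P\exp(2c|T_n|)$ uniformly in $n$ for large $n$, where $c=\sup_n|\lambda_n|$. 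By Cauchy--Schwarz, $|T_n|\le\sqrt n$ deterministically. Integrating by parts,
\[
\E_P e^{2c|T_n|} = 1+\int_0^{\sqrt n} 2c\,e^{2cx}\,P(|T_n|>x)\,dx .
\]
Bounding $P(|T_n|>x)$ by Berry--Esseen alone leaves the error term $e^{2c\sqrt n}\cdot C\kappa/\sqrt n$, which blows up. The remedy is a uniform sub-Gaussian tail for self-normalized sums. The Bentkus--Götze bound gives a tail of order $\kappa\, x^{-3}n^{-1/2}$ for large $x$, but that is still insufficient against $e^{2cx}$. I would instead use the universal tail inequality $P(|T_n|>x)\le 4\exp(-x^2/8)$ for symmetric data, or the Giné--Götze--Mason exponential tail, which holds uniformly under bounded skew. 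With this tail the integral is bounded by a constant depending only on $c$ and $\kappa$. (A cheaper alternative is to show only that $\sup_P \E_P|T_n|$ is bounded via Berry--Esseen combined with $|T_n|\le\sqrt n$, which is what the paper's hint suggests, and then feed that into the appendix proposition.)
\end{itemize}

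\textbf{Main obstacle.} The hardest step is the uniform exponential integrability of $T_n$. The trivial bound $|T_n|\le\sqrt n$ must be balanced against tail decay coming from the uniformly bounded skew, and the Berry--Esseen rate $1/\sqrt n$ alone does not beat $e^{c\sqrt n}$. I would first try to establish that the Giné--Götze--Mason sub-Gaussian tail constant depends on $P$ only through $\kappa$. Once uniform convergence in distribution and uniform integrability are both in hand, the appendix proposition gives $\limsup_n\sup_P\E_P E_n\le 1$.
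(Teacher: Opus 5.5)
Your architecture matches the paper's. For the pointwise part you use Gin\'e--G\"otze--Mason (GGM) asymptotic normality, an exponential-moment bound and Billingsley's theorem. For the uniform part you use the appendix's uniform Billingsley proposition, fed by the self-normalized Berry--Esseen bound and a uniform exponential-moment bound.

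\textbf{The gap in the uniform part.} It is the step you flag and leave open: you never obtain $\sup_n\sup_{P\in\calP}\E_P e^{t|T_n|}<\infty$. Two of your candidate routes fail as stated. First, the tail $4e^{-x^2/8}$ comes from Hoeffding conditional on the $|X_i|$, so it needs symmetric data, which you do not have. Second, a uniform bound on $\E_P|T_n|$ cannot be fed directly into the appendix proposition, because on its own it gives no uniform integrability of $e^{\lambda T_n}$.

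The missing link is to combine the two ingredients. GGM's Theorem 2.5, in the form the paper uses, gives $\E_P e^{t|T_n|}\le 2e^{C_Pt^2}$ with $C_P=(1+4e/3)^2\max\{1,\sup_\ell\E_P|T_\ell|\}^2$. So the constant depends on $P$ only through the first absolute moments of the self-normalized sums. Those moments are bounded uniformly by exactly the computation you mention. Since $|T_\ell|\le\sqrt\ell$,
\[
\E_P|T_\ell|=\int_0^{\sqrt\ell}P(|T_\ell|>r)\,dr\le \E|Z|+2\sqrt\ell\cdot\frac{25\kappa}{\sqrt\ell}\le 1+50\kappa ,
\]
where $\kappa$ is your skew bound. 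The $\sqrt\ell$ from Cauchy--Schwarz cancels the Berry--Esseen rate. This gives a $P$-free constant $C$ and hence $\sup_P\E_P e^{2c|T_n|}\le 2e^{4Cc^2}$, which is how the paper closes the step. With this bound in hand, the rest of your uniform argument goes through.

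\textbf{The pointwise hypothesis.} Your suspicion about random $\lambda_n$ that are only almost surely bounded is justified. The localization cannot be closed, because the pointwise claim is false in that generality. Take $N(\theta,1)$ data and set $\lambda_n=T_n\ind{T_n>2\sqrt{\log\log n}}$ for $n\ge 3$.
\begin{itemize}
\item By the LIL, $\lambda_n=0$ eventually almost surely. Hence $\lambda_n\ge0$, $\lambda_n\to0$ and $\sup_n\lambda_n<\infty$ almost surely.
\item On $\{T_n>2\sqrt{\log\log n}\}$ we have $E_n=e^{T_n^2/2}$.
\item $T_n$ has density proportional to $(1-x^2/n)^{(n-3)/2}$ on $(-\sqrt n,\sqrt n)$.
\item For $x\le n^{1/4}$, $(1-x^2/n)^{(n-3)/2}e^{x^2/2}\ge e^{-1/2}$, so $\E E_n\gtrsim n^{1/4}\to\infty$.
\end{itemize}
So a deterministic bound $\sup_n|\lambda_n|\le c$ is needed. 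That is what the paper's proof actually uses: it starts from ``a bounded sequence'' and applies its uniform-integrability lemma with a fixed $\lambda_{\max}$. Under that bound, your pointwise argument coincides with the paper's.
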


We have stated Theorem~\ref{thm:iwr-asymp-evar} as generally as possible---asymptotic e-variables are relatively new objects in the literature and new results might thus be interesting in their own right. Going forward, however, we will not require all these degrees of freedom. In particular, the \aphci based on $E_n^\iwr$ will not choose $\lambda$ as a function of $n$.

Let us now give the \aphci that results from inverting the \iwr asymptotic e-variable as implied by Proposition~\ref{prop:cs-via-evalue}. The proof can be found in Appendix~\ref{proof:iwr-aphci}.

\begin{theorem}
\label{thm:iwr-aphci}
Let $(X_n)$ be a sequence of iid random variables drawn from some distribution in $\dan(\mu)$.  
For any $\lambda>0$ and $\alpha\in(0,1)$, set 
\begin{equation*}
    A_{n,\alpha}(\lambda) = \frac{\log(2/\alpha) + \lambda^2/2}{\lambda\sqrt{n}}.  
\end{equation*}
Then for any $\lambda>0$ independent of $\alpha$,  $(\calH_n^\iwr(\alpha;\lambda))$ is an \aphci for $\mu$, where  
\begin{equation}
\label{eq:iwr-aphci}
  \calH_n^\iwr(\alpha;\lambda) :=  (\Xbar_n \pm W_n^\iwr(\alpha;\lambda)), \quad W_n^\iwr(\alpha;\lambda):=  \frac{  A_{n,\alpha}(\lambda)\shat_n}{(1 - A_{n,\alpha}(\lambda)^2)^{1/2}},
\end{equation}
if $n > (\log(2/\alpha)/\lambda + \lambda/2)^2$, otherwise $\calH_n^\iwr(\lambda) = \Re$.  
Additionally, if $(X_n)$ have $\calP$-uniformly bounded skew for some $\calP$, then $(\calH_n^\iwr(\alpha;\lambda))$ is a $\calP$-uniform \aphci. 
\end{theorem}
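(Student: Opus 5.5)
The plan is to build a two-sided e-variable from two one-sided \iwr e-variables, show that the interval in \eqref{eq:iwr-aphci} is exactly the set where this e-variable is below $1/\alpha$, and then apply the sufficiency direction of Proposition~\ref{prop:cs-via-evalue}. Fix $\lambda>0$ and set
\[
E_n(\theta) := \tfrac12\bigl(E_n^\iwr(\theta;\lambda)+E_n^\iwr(\theta;-\lambda)\bigr).
\]
By Theorem~\ref{thm:iwr-asymp-evar}, applied with the constant sequences $\lambda_n\equiv\lambda$ and $\lambda_n\equiv-\lambda$, both summands are asymptotic e-variables for $\mu$. (The statement asks for nonnegative $\lambda_n$, but the proof is symmetric in the sign of $\lambda$, or one can replace $X_i$ by $-X_i$.) An average of asymptotic e-variables is again one, since $\limsup$ is subadditive, and the same holds uniformly in $\calP$ under bounded skew. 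Rather than track exact equality, it is enough to find a set $\calH_n(\alpha)$ with
\[
\ind{\mu\notin\calH_n(\alpha)}/\alpha \le E_n(\mu) \quad\text{for all }\alpha .
\]
Taking the supremum over $\alpha$ inside the expectation then bounds $\risk(\calH_n)$ by $\E_P[E_n(\mu)]$, whose $\limsup$, or $\limsup\sup_P$, is at most $1$.

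The core of the argument is deterministic algebra. Since $E_n(\theta)\ge \tfrac12\exp(\lambda|T_n(\theta)|-\lambda^2/2)$ with $T_n(\theta)=S_n(\theta)/V_n(\theta)$, the event $E_n(\mu)<1/\alpha$ is implied by $|T_n(\mu)|< (\log(2/\alpha)+\lambda^2/2)/\lambda = \sqrt n\,A_{n,\alpha}(\lambda)$. It therefore suffices to show that $|T_n(\theta)|\ge \sqrt n A$ forces $\theta\notin(\Xbar_n\pm W_n)$ whenever $A<1$, which is exactly the condition $n>(\log(2/\alpha)/\lambda+\lambda/2)^2$.

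To do this, write $d=\Xbar_n-\theta$. Then $S_n=nd$ and $V_n^2=n(\shat_n^2+d^2)$, so
\[
T_n^2/n=\frac{d^2}{\shat_n^2+d^2},
\]
which is increasing in $d^2$. Solving $d^2/(\shat_n^2+d^2)<A^2$ gives $|d|<A\shat_n/(1-A^2)^{1/2}=W_n$. Hence $|T_n(\theta)|<\sqrt n A$ if and only if $\theta\in(\Xbar_n\pm W_n)$, and we conclude that $\mu\notin\calH_n(\alpha)$ implies $E_n(\mu)\ge 1/\alpha$. When $A\ge1$ the set is $\Re$ and the inequality is trivial. For $\alpha\ge1$, either define $\calH_n(\alpha)=\Re$ or note that the bound continues to hold as long as $A<1$. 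Either way, $\sup_\alpha \ind{\mu\notin\calH_n(\alpha)}/\alpha\le E_n(\mu)$ pointwise.

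The step I expect to be the main obstacle is the case $\shat_n=0$ or $V_n(\mu)=0$, together with the formal sign restriction on $\lambda_n$ in Theorem~\ref{thm:iwr-asymp-evar}. For the degenerate case, use the convention $T_n=0$ when $V_n=0$; if $\shat_n=0$ with $\theta\ne\Xbar_n$, then $|T_n|=\sqrt n\ge\sqrt n A$ is consistent with rejection. The uniform statement follows identically, using the uniform part of Theorem~\ref{thm:iwr-asymp-evar}.
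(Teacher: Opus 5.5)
Your proposal is correct and is essentially the paper's argument: averaging $E_n^\iwr(\theta;\lambda)$ and $E_n^\iwr(\theta;-\lambda)$ is exactly Proposition~\ref{prop:cs-via-compound-eval} with $K=2$, which bounds $\ind{\mu\notin\calH_n(\alpha)}/\alpha$ by $\max_j E_n^j/2\le\frac12\sum_j E_n^j$ just as you do. Your inversion via $T_n^2/n=d^2/(\shat_n^2+d^2)$ is a tidier two-sided version of the paper's one-sided squaring.

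There are two small slips. First, the bound $E_n(\mu)\ge\tfrac12 e^{\lambda|T_n(\mu)|-\lambda^2/2}$ gives $\{E_n(\mu)<1/\alpha\}\subseteq\{|T_n(\mu)|<\sqrt n A\}$, not the reverse inclusion you state, which can fail. Together with your if-and-only-if, it is this inclusion that yields the needed $\mu\notin\calH_n(\alpha)\Rightarrow E_n(\mu)\ge 1/\alpha$. Second, under your convention $T_n=0$ when $V_n=0$, the equivalence breaks on $\{X_1=\dots=X_n=\mu\}$, where $(\Xbar_n\pm 0)$ is empty. That event is harmless: it adds at most $e^{\lambda\sqrt n}P(X_1=\mu)^n\to 0$ to the risk. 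This also holds uniformly in $\calP$, since bounded skew keeps $P(X_1=\mu)$ bounded away from $1$.
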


How do we choose the parameter $\lambda$ in $\calH_n^\iwr$? The width will approach zero for any fixed $\lambda$, since $A_{\alpha,n}(\lambda)\xrightarrow{n\to\infty}0$. For large $n$, $W_n^\iwr(\lambda) \approx A_{n,\alpha}(\lambda)\shat_n$, so the ideal choice of $\lambda$ is $\lambda_\alpha = \sqrt{2\log(2/\alpha)}$ which minimizes $\lambda\mapsto A_{n,\alpha}(\lambda)$. 
Unfortunately, as emphasized both in the theorem statement and in Remark~\ref{rem:alpha-dependence}, such an $\alpha$-dependent choice of $\lambda$ is illegal.

Sections~\ref{sec:ex-ante-tuning} and~\ref{sec:aphcis-via-mom} pursue two distinct strategies for choosing $\lambda$. 
First, however, let us better understand the role of a fixed $\lambda$ by studying how the limiting width of $\calH_n^\iwr$ scales with $\sqrt{n}$. Suppose that $(X_n)$ are iid with finite variance $\sigma^2$. Then a single application of the SLLN combined with the continuous mapping theorem shows that 
\begin{equation}
\label{eq:iwr-width}
    \sqrt{n}W_n^\iwr(\alpha;\lambda) \xrightarrow[n\to\infty]{a.s.} \sigma g(\lambda,\alpha) \text{~ where ~} g(\lambda,\alpha) := \frac{\log(2/\alpha)}{\lambda} + \frac{\lambda}{2}.
\end{equation}

It is fruitful to compare this width to the usual Wald CI for finite (unknown) variance based on the CLT (which we emphasize is not post-hoc valid): 
\begin{equation}
\label{eq:wald-CI}
    \calH_n^\wald(\alpha):= (\Xbar \pm W_n^\wald(\alpha)), \text{~ where ~} W_n^\wald(\alpha) := z_{1-\alpha/2}\frac{\sigmahat_n}{\sqrt{n}}.
\end{equation}
By the SLLN, $\sqrt{n} W_n^\wald(\alpha) \to z_{1-\alpha/2}\sigma$, where $z_{1-\alpha/2} = \Phi^{-1}(1 - \alpha/2)$ is the standard normal quantile. For small $\alpha$,  $\Phi^{-1}(1-\alpha/2)$ behaves as $\sqrt{2\log(2/\alpha})$ (indeed, $\lim_{\alpha\to 0^+} \Phi^{-1}(1 - \alpha/2)/\sqrt{2\log(2/\alpha)}=1$).  Thus, $\sqrt{n} W_n^\wald(\alpha) \approx \sqrt{2\sigma^2\log(2/\alpha})$ for large $n$ and small $\alpha$. 

As discussed above, the ideal choice of $\lambda$ is $\lambda_\alpha := \sqrt{2\log(2/\alpha})$ so that $g(\lambda_\alpha, \alpha) = \log(2/\alpha)/\lambda_\alpha + \lambda_\alpha/2 = \sqrt{2\log(2/\alpha)}$. In this case $\sqrt{n}W_n^\iwr(\alpha;\lambda_\alpha) \to \sqrt{2\sigma^2\log(2/\alpha)}$ almost surely, the same as the Wald CI, approximately the same as the Wald CI for small $\alpha$. 

If one uses $(\calH_n^\iwr)$ simply as regular asymptotic CIs and does not change $\alpha$, 
then one is of course free to choose $\lambda = \lambda_\alpha$. However, in a post-hoc setting (i.e., one in which $\alpha$ may change), this is not allowed. The next two sections investigate two distinct data-independent methods to choose $\lambda$ in Theorem~\ref{thm:iwr-aphci}.  The first simply takes $\lambda=\lambda_{\alpha_0}$ where $\alpha_0$ is a data-independent ``guess'' for $\alpha$. The second creates a new asymptotic e-variable by integrating out the parameter $\lambda$ in $E_n^{\iwr}$.

\subsection{Choosing \texorpdfstring{$\lambda$}{parameters}, Option I: Ex ante anchoring}
\label{sec:ex-ante-tuning}

\begin{wrapfigure}{r}{0.5\textwidth} 
    \centering
    \includegraphics[width=\linewidth]{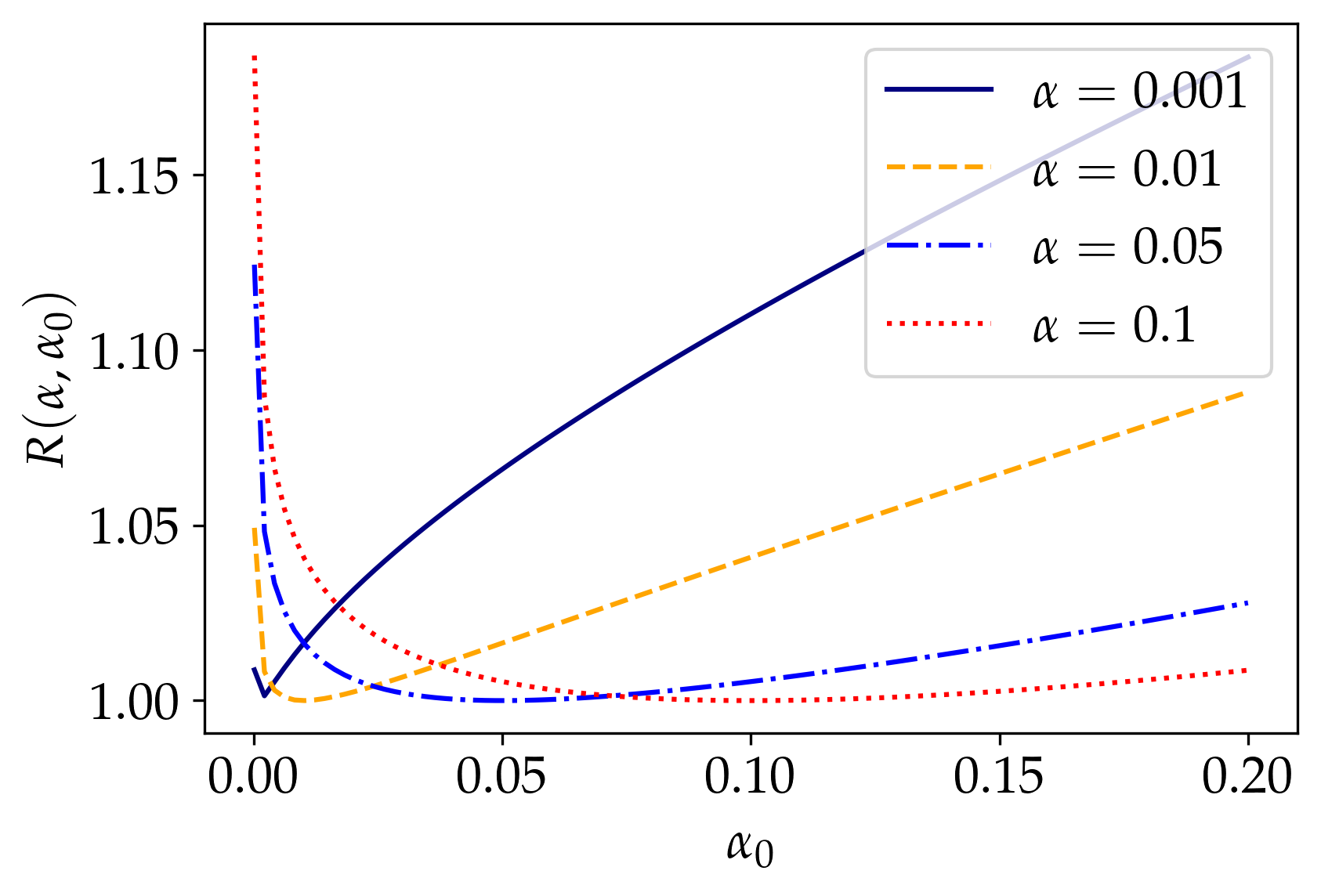}
    \caption{Empirical evaluation of the ratio in~\eqref{eq:g_ratio}. Here $\alpha_0$ ranges from 0.001 to 0.2 and we plot the curves $\alpha_0\mapsto R(\alpha,\alpha_0)$ for select values of $\alpha$. The maximum value $R(\alpha,\alpha_0)$ across all values of $\alpha$ and $\alpha_0$ is 1.184.  }
    \label{fig:anchor_ratio}
\end{wrapfigure}

Here we consider simply fixing some nominal level $\alpha_0$ in advance of seeing the data and choosing $\lambda = \lambda_{\alpha_0}$. This can be viewed as our best guess of what $\alpha$ will be (or perhaps our \emph{hope} of what it will be) and take $\lambda_0 := \sqrt{2\log(2/\alpha_0)}$. 
In this case $g(\lambda_0,\alpha) = \log(2/\alpha)/\sqrt{2\log(2/\alpha_0)} + \sqrt{\log(2/\alpha_0)/2}$, which will be close to $g(\lambda_\alpha,\alpha)$ for $\alpha_0\approx \alpha$.

This strategy, henceforth referred to as \emph{ex ante anchoring}, has a similar flavor to the idea of ``luckiness'' in Bayesian theory (PAC-Bayesian theory in particular)~\citep{shawe2002structural}, in which bounds are tighter the closer the posterior is to the prior, but valid for any prior; see also \citep[Figure 3]{waudby2020confidence} for an illustration of this phenomenon with Bayesian-inspired e-values. We refer to  $\alpha_0$ as the \emph{anchor}.

As we will see in the simulations of Section~\ref{sec:simulations}, ex ante anchoring performs surprisingly well in practice, even for $\alpha\gg \alpha_0$. To get a sense of why, note that the choice of $\alpha$ only affects the ratio of the asymptotic widths of the \aphcis by the square root of a logarithmic factor. Indeed, the ratio of $g(\lambda_0,\alpha)$ to $g(\lambda_\alpha,\alpha)$ (defined in~\eqref{eq:iwr-width}) can be written as 
\begin{equation}
\label{eq:g_ratio}
    R(\alpha,\alpha_0) := \frac{g(\lambda_0,\alpha)}{g(\lambda_\alpha,\alpha)} = \frac{1}{2}\left(\sqrt{\frac{\log(2/\alpha)}{\log(2/\alpha_0)}} + \sqrt{\frac{\log(2/\alpha_0)}{\log(2/\alpha)}}\right).
\end{equation}

For reasonable values of $\alpha$ and $\alpha_0$, this ratio is quite small. Figure~\ref{fig:anchor_ratio} plots the curves $x\mapsto R(x,\alpha_0)$ for select values of $\alpha_0$. For $x$ between 0.0001 and 0.2, the ratio never exceeds 1.19. Thus, by \eqref{eq:iwr-width}, for such values of $\alpha_0$ and $\alpha$, the asymptotic width of $\calH_n^\iwr$ does not change substantially even when $|\alpha - \alpha_0|\gg 0$.

Nevertheless, one may be unsatisfied by asymptotic widths of $\calH_n^\iwr$ scaling linearly with $\log(1/\alpha)$. The \aphci  presented in the next section will remedy this issue, resulting in an asymptotic width which scales as $\Theta(\sqrt{\sigma^2\log(1/\alpha)})$ similarly to the Wald CI. However, despite this theoretical appeal, we find that ex-ante anchoring works best in practice (Section~\ref{sec:simulations}).

\subsection{Choosing \texorpdfstring{$\lambda$}{parameters}, Option II: The method of mixtures}
\label{sec:aphcis-via-mom}

We can provide asymptotic e-values which are free of $\lambda$ using the ``method of mixtures,'' a technique pioneered in its modern form by Herbert Robbins in the context of tail inequalities for supermartingales~\citep{robbins1970statistical}. It has become a cornerstone of anytime-valid inference, deployed in both univariate (e.g.,~\citep{kaufmann2021mixture,howard2021time,waudby2024time}) and multivariate (e.g,~\citep{de2009self,chugg2025time}) settings.  The following lemma gives sufficient conditions under which we may apply the method of mixtures to a family of asymptotic e-variables to obtain another asymptotic e-variable. The proof is in Appendix~\ref{proof:mixture-is-valid}. 

\begin{proposition}
\label{prop:mixture-is-valid}    
Let $\Lambda\subset\Re$ and let $\pi$ be any (data-free) probability distribution on $\Lambda$. Suppose that for each $\lambda\in\Lambda$, the sequence $(E_n(\lambda))$ is an asymptotic e-variable for some set of distributions $\calP$. If $(\sup_{\lambda\in\Lambda} E_n(\lambda))$ is uniformly integrable for all $P\in\calP$, then the process $(E_n(\pi))$ defined as 
\begin{equation}
\label{eq:mixture-of-lambda}
    E_n(\pi) := \int_\Lambda E_n(\lambda)\d\pi(\lambda),
\end{equation}
is an asymptotic e-variable. Further, if $(\sup_{\lambda\in\Lambda} E_n(\lambda))$ is $\calP$-uniformly integrable, then $(E_n(\pi))$ is a $\calP$-uniform asymptotic e-variable. 
\end{proposition}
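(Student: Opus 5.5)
The plan is to swap the limit and the integral over $\lambda$ using Fubini together with a Fatou-type argument whose domination comes from the uniform integrability of the supremum. Write $M_n := \sup_{\lambda\in\Lambda} E_n(\lambda)$, and assume joint measurability of $(\omega,\lambda)\mapsto E_n(\lambda)$ so that $E_n(\pi)$ is a random variable. By Tonelli, since everything is nonnegative,
\[
\E_P[E_n(\pi)] = \int_\Lambda \E_P[E_n(\lambda)]\,\d\pi(\lambda).
\]
The goal is to bound $\limsup_n$ of the right-hand side by $1$ for each $P$, and in the uniform case to bound $\limsup_n \sup_P$ of it.

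\textbf{Pointwise case.} Fix $P\in\calP$ and set $f_n(\lambda) := \E_P[E_n(\lambda)]$. For every $\lambda$ we have $f_n(\lambda) \le \E_P[M_n]$. Uniform integrability of $(M_n)$ under $P$ gives $\sup_n \E_P[M_n] \le K + \sup_n \E_P[M_n \ind{M_n\ge K}] < \infty$ for a suitable $K$. Hence the functions $f_n$ are bounded by a finite constant $C_P$, which is $\pi$-integrable because $\pi$ is a probability measure. Reverse Fatou then yields
\[
\limsup_n \int f_n\,\d\pi \le \int \limsup_n f_n(\lambda)\,\d\pi(\lambda) \le \int 1\,\d\pi = 1,
\]
where the last inequality uses that each $(E_n(\lambda))$ is an asymptotic e-variable, so $\limsup_n f_n(\lambda) \le 1$ for every $\lambda$. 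Measurability of $\lambda\mapsto f_n(\lambda)$ follows from Tonelli.

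\textbf{Uniform case.} This is the main obstacle. It is tempting to set $g_n(\lambda) := \sup_P \E_P[E_n(\lambda)]$ and repeat the argument, but the hypothesis only gives $\limsup_n g_n(\lambda) \le 1$ if each $(E_n(\lambda))$ is itself a $\calP$-uniform asymptotic e-variable. I will read the statement as assuming this in the uniform part, since pointwise validity in $\lambda$ plus uniform integrability cannot by itself produce uniformity over $P$. Under that reading, $\calP$-uniform integrability gives
\[
g_n(\lambda) \le \sup_{n}\sup_P \E_P[M_n] \le K + \sup_n\sup_P \E_P[M_n\ind{M_n\ge K}] < \infty,
\]
so the $g_n$ are uniformly bounded. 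We then have
\[
\sup_P \E_P[E_n(\pi)] \le \int g_n\,\d\pi.
\]
If $g_n$ fails to be measurable, I replace the integral by an outer integral, or restrict to countable $\Lambda$ or a separable setting. Reverse Fatou applied to the bounded $g_n$ then gives $\limsup_n \int g_n\,\d\pi \le 1$.

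The only remaining point is to notice that uniform integrability of the supremum is used purely to obtain a bound on $\E M_n$ that is constant in $n$, and that this bound is what makes reverse Fatou applicable.
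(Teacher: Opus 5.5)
Your proof is correct and essentially matches the paper's. The paper notes that this proposition follows from its generalized mixture result, whose proof is exactly Tonelli plus reverse Fatou under a domination $\sup_n\E_P[E_n(\lambda)]\le h_P(\lambda)$. You take the constant $h_P\equiv\sup_n\E_P[\sup_{\lambda}E_n(\lambda)]$, whereas the paper's direct proof truncates at a level $T_\eps$ given by uniform integrability. Your stronger reading of the uniform part is necessary: a singleton $\Lambda$ with $E_n=1+\ind{X_1\ge n}$, $\calP=\{P_k\}$ and $X_1=k$ a.s.\ under $P_k$ breaks the literal statement. It is also exactly what the paper's proof tacitly assumes.

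One thing to change is the outer-integral fallback, because reverse Fatou fails for outer integrals (take indicators of pairwise disjoint sets of full outer measure). Instead, choose $P_n\in\calP$ with $\E_{P_n}[E_n(\pi)]\ge\sup_{P\in\calP}\E_P[E_n(\pi)]-1/n$. Then apply reverse Fatou to the measurable functions $\lambda\mapsto\E_{P_n}[E_n(\lambda)]$. These are bounded by $\sup_n\sup_{P}\E_P[\sup_\lambda E_n(\lambda)]<\infty$, and their pointwise $\limsup$ is at most $\limsup_n\sup_P\E_P[E_n(\lambda)]\le 1$. This way you never need measurability of $\lambda\mapsto\sup_P\E_P[E_n(\lambda)]$.
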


Appendix~\ref{app:generalized-mixture} provides more general sufficient conditions under which the mixture in \eqref{eq:mixture-of-lambda} can result in an asymptotic e-variable. See Proposition~\ref{prop:mixture-generalized}. However, the result in \eqref{eq:mixture-of-lambda} is satisfactory for our purposes in this section.

\begin{remark}
In the nonasymptotic setting, any mixture of e-variables is again an e-variable. This follows immediately by Tonelli's theorem; no other conditions are needed. We might thus wonder if Proposition~\ref{prop:mixture-is-valid} truly requires any additional assumptions beyond $(E_n)$ being an asymptotic e-variable. The answer is yes. Consider $E_n(\lambda) = 1 + 2^n \ind{\lambda\in (2^{-(n+1)}, 2^{-n}]}$ for $\lambda \in [0,1]$. Let $\pi$ be the uniform distribution over $[0,1]$. For large enough $n$ and any fixed $\lambda\in[0,1]$, we have $E_n(\lambda) =1$, so $\limsup_n \E_P[E_n(\lambda)] = 1$. On the other hand, $\int E_n(\lambda) \d\pi(\lambda) = 1 + 1$, so $\limsup_n \E_P[E_n^\pi]=2$. Therefore, \emph{some} extra condition beyond being an asymptotic e-variable is required for the mixture to also be an asymptotic e-variable. 
\end{remark}

Using that $(E_n^\iwr)$ is uniformly integrable on compact sets (Lemma~\ref{lem:Ereg_ui}), we immediately obtain the following result.  

\begin{corollary}
\label{cor:compact_mix}
    Let $\Lambda\subseteq\mathbb{R}$ be a compact set. Let $\pi$ be any (data-free) probability distribution on $\Lambda$. Then, for any fixed $\eta>0$, the mixture $E_n^\iwr(\theta;\pi) = \int_\Lambda E_n^\iwr(\theta;\lambda)\d\pi$
    is an asymptotic e-variable for $\calM_2(\theta)$.
\end{corollary}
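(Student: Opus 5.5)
The plan is to apply Proposition~\ref{prop:mixture-is-valid} with $\calP=\calM_2(\theta)$ and the family $\{(E_n^\iwr(\theta;\lambda)):\lambda\in\Lambda\}$. Two hypotheses need checking.

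\textbf{Pointwise validity.} For each fixed $\lambda\in\Lambda$, the sequence $(E_n^\iwr(\theta;\lambda))$ must be an asymptotic e-variable for $\calM_2(\theta)$. Every distribution with finite variance lies in $\dan(\theta)$, with $a_n=\sigma\sqrt n$ and $b_n=n\theta$. Theorem~\ref{thm:iwr-asymp-evar}, applied with the constant sequence $\lambda_n\equiv\lambda$, therefore gives a sharp asymptotic e-variable. If $\Lambda$ contains negative values, note that the nonnegativity restriction in that theorem is only a convention. Replacing $X_i-\theta$ by $\theta-X_i$ flips the sign of $S_n/V_n$ and leaves the Gaussian limit unchanged. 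The same argument then covers $\lambda<0$.

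\textbf{Uniform integrability of the supremum.} Fix $P\in\calM_2(\theta)$. Let $L:=\max_{\lambda\in\Lambda}|\lambda|<\infty$, which is finite by compactness. Write $T_n:=S_n(\theta)/V_n(\theta)$. Since $e^{-\lambda^2/2}\le 1$,
\[
\sup_{\lambda\in\Lambda}E_n^\iwr(\theta;\lambda)\le \exp(L|T_n|)\le \exp(LT_n)+\exp(-LT_n).
\]
So it suffices to show that $(\exp(\pm L T_n))_n$ is uniformly integrable under $P$. This is exactly the content cited as Lemma~\ref{lem:Ereg_ui} ($E_n^\iwr$ is uniformly integrable on compact sets). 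It is also the Gin\'e--G\"otze--Mason fact used in the proof of Theorem~\ref{thm:iwr-asymp-evar}. Their result says that $\sup_n\E_P e^{t|T_n|}<\infty$ for every $t$ when $P$ is in the domain of attraction of the normal.

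With this in hand, uniform integrability follows from a de la Vall\'ee-Poussin argument. Apply the moment bound with $t=2L$ to get $\sup_n \E_P[\exp(L|T_n|)^2]<\infty$. This yields
\[
\sup_n\E_P\bigl[\exp(L|T_n|)\,\ind{\exp(L|T_n|)\ge K}\bigr]\le \frac{C}{K}\to 0 .
\]
Any sequence dominated by a uniformly integrable sequence is itself uniformly integrable, so $(\sup_{\lambda\in\Lambda}E_n^\iwr(\theta;\lambda))$ is uniformly integrable for each $P$. The measurability of the supremum is not an issue, because $\lambda\mapsto E_n^\iwr(\theta;\lambda)$ is continuous, so the supremum can be taken over a countable dense subset.

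\textbf{Conclusion and main obstacle.} Proposition~\ref{prop:mixture-is-valid} then gives that $E_n^\iwr(\theta;\pi)$ is an asymptotic e-variable. The unused parameter $\eta$ in the statement plays no role. The main obstacle is the exponential-moment bound $\sup_n\E_P e^{t|T_n|}<\infty$ under only a finite second moment. If it is not simply assumed via Lemma~\ref{lem:Ereg_ui}, my first attempt would be the sub-Gaussian self-normalized tail bound for symmetric variables, $P(|T_n|>x)\le 2e^{-x^2/2}$. I would then extend it to general $P$ by symmetrization, or else invoke Gin\'e--G\"otze--Mason directly.
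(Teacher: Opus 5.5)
Your proposal is correct and follows the paper's own route. The paper obtains the corollary immediately from Proposition~\ref{prop:mixture-is-valid}. It uses Theorem~\ref{thm:iwr-asymp-evar} for the validity of each fixed-$\lambda$ e-variable $E_n^\iwr(\theta;\lambda)$, and Lemma~\ref{lem:Ereg_ui}(i) (the Gin\'e--G\"otze--Mason uniform sub-Gaussian bound combined with de la Vall\'ee-Poussin, exactly as you sketch) for uniform integrability of $\sup_{\lambda\in\Lambda}E_n^\iwr(\theta;\lambda)$. Your extra remarks are sound details the paper leaves implicit: $\calM_2(\theta)\subseteq\dan(\theta)$, the sign-flip argument for negative $\lambda$, measurability of the supremum, and the fact that $\eta$ plays no role.
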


In the nonasymptotic setting, when one has a family of e-variables $E_n(\lambda)$ valid for all $\lambda\in\Re$, a powerful approach is to use a Gaussian mixture (or some other distribution supported on all of $\Re$). Indeed, this was Robbins' original approach~\citep{robbins1970boundary}. Unfortunately, Corollary~\ref{cor:compact_mix} precludes such a choice. It appears that one would need to restrict the class of distributions to those with strict control of the moment-generating function to be able to apply the Gaussian mixture. This is untenable for our purposes, as we want to assume the existence of a few moments at most.

Instead, we will use a \emph{truncated} Gaussian to mix over a finite region $[-R,R]$. Such an approach has previously been used to develop nonasymptotic confidence sequences \citep{chugg2025closed,chugg2025time}.  To state the result, define the object
\begin{equation}
    I_R(y,u) := \sqrt{\frac{\pi}{u}}\exp\left\{\frac{y^2}{4u}\right\} (\Phi(g_1) - \Phi(g_2)), 
\end{equation}
where $\Phi$ is the cdf of the standard normal and 
 $g_1 = R\sqrt{2u} - y/\sqrt{2u}$ and $g_2 = -(R\sqrt{2u} + y/\sqrt{2u})$. It is easy to see that $I_R(y,u)$ is symmetric about $y=0$. Indeed, $I_R(y,u)$ should be thought of as an upward-facing parabola with minimum at $y=0$ and shape and (positive) intercept determined by $u$ and $R$. 

 Mixing $E_n^\iwr$ over a truncated Gaussian gives an asymptotic e-variable stated in terms of $I_R(y,u)$. The result is fairly unenlightening so we relegate it  to Appendix~\ref{app:truncated-gaussian} and proceed immediately to the resulting \aphci. The proof of the following theorem is provided in Appendix~\ref{proof:iwr-mixture-aphci}. 

 \begin{theorem}
     \label{thm:iwr-mixture-aphci}
     Let $(X_n)$ be a sequence of iid random variables drawn from some distribution in $\dan(\mu)$. 
     Fix any $R,\kappa>0$ and let $Z_{R,\kappa} = \kappa\sqrt{2\pi} (\Phi(R/\kappa) - \Phi(-R/\kappa))$.  Then the process  $(\calH_n^{\mix,\iwr}(\alpha;R,\kappa))$ is an  \aphci for $\mu$ where $\calH_n^{\mix,\iwr}(\alpha;R,\kappa) = (\Xbar_n \pm W_n^{\mix,\iwr}(\alpha;R,\kappa))$ with 
    \begin{equation}
    \label{eq:mix-iwr-aphci}
        W_n^{\mix,\iwr}(\alpha;R,\kappa) = \frac{\shat_n y^\star_\alpha}{\sqrt{n - (y^\star_\alpha)^2}},
    \end{equation}
    where  
    $y_\alpha^\star$ solves $I_R(y_\alpha^\star,(\kappa^2 + 1)/(2\kappa^2)) = Z_{R,\kappa}/\alpha$. If the denominator in~\eqref{eq:mix-iwr-aphci} is non-positive, we take the CI to be all of $\Re$.
 \end{theorem}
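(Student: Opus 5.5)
The plan is to identify $\calH_n^{\mix,\iwr}$ as the level set of a truncated-Gaussian mixture of the \iwr e-variables. Then I would check that this mixture is an asymptotic e-variable using Proposition~\ref{prop:mixture-is-valid}, and invert it via the ``if'' direction of Proposition~\ref{prop:cs-via-evalue}.

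\textbf{Step 1: closed form of the mixture.} Write $T_n(\theta) := S_n(\theta)/V_n(\theta)$. Let $\pi$ be the $N(0,\kappa^2)$ distribution truncated to $\Lambda = [-R,R]$, with density $e^{-\lambda^2/(2\kappa^2)}/Z_{R,\kappa}$ on $[-R,R]$. Then
\[
E_n(\theta;\pi) = \frac{1}{Z_{R,\kappa}}\int_{-R}^{R} \exp\Big(\lambda T_n(\theta) - u\lambda^2\Big)\d\lambda, \qquad u = \frac{\kappa^2+1}{2\kappa^2}.
\]
Completing the square, $\lambda y - u\lambda^2 = -u(\lambda - y/(2u))^2 + y^2/(4u)$. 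Substituting $t = \sqrt{2u}(\lambda - y/(2u))$ gives exactly $I_R(y,u)$, with the endpoints mapping to $g_1$ and $g_2$. Hence $E_n(\theta;\pi) = I_R(T_n(\theta),u)/Z_{R,\kappa}$.

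\textbf{Step 2: validity.} For each fixed $\lambda$, Theorem~\ref{thm:iwr-asymp-evar} (with constant $\lambda_n \equiv \lambda$) makes $(E_n^\iwr(\mu;\lambda))$ an asymptotic e-variable under $\dan(\mu)$. Uniform integrability of the supremum follows from
\[
\sup_{|\lambda|\le R} E_n^\iwr(\mu;\lambda) \le \exp\big(R|T_n(\mu)|\big) \le e^{RT_n(\mu)} + e^{-RT_n(\mu)} .
\]
Each term on the right is uniformly integrable by the result of \citet{gine1997student} used in the pointwise part of Theorem~\ref{thm:iwr-asymp-evar}; this is the same fact behind Lemma~\ref{lem:Ereg_ui}, but here it must hold on $\dan(\mu)$ rather than only on $\calM_2$. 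Proposition~\ref{prop:mixture-is-valid} then yields that $(E_n(\mu;\pi))$ is an asymptotic e-variable. This step is the main obstacle. I must make sure the Giné--Götze--Mason uniform integrability of $\exp(c|T_n|)$ is available for every $c$ under the domain-of-attraction assumption; if only one-sided exponentials are provided, the two-term bound above handles it.

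\textbf{Step 3: post-hoc validity of the level sets.} Set $\calH_n(\alpha) = \{\theta : E_n(\theta;\pi) < 1/\alpha\}$. Only the ``if'' direction of Proposition~\ref{prop:cs-via-evalue} is needed, and it can be verified directly. Pointwise,
\[
\sup_{\alpha>0}\frac{\ind{E_n(\mu;\pi)\ge 1/\alpha}}{\alpha} = E_n(\mu;\pi),
\]
since the supremum is attained at $\alpha = 1/E_n$. Therefore $\risk(\calH_n;\mu,P) = \E_P[E_n(\mu;\pi)]$, and its $\limsup$ is at most $1$.

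\textbf{Step 4: inversion to the explicit interval.} The function $y\mapsto I_R(y,u)$ is the integral of $\cosh(\lambda y)e^{-u\lambda^2}$ over $[-R,R]$ (by symmetry of the weight). It is therefore even and strictly increasing in $|y|$, with $I_R(0,u) = Z_{R,\kappa}$. Consequently, for $\alpha<1$ the equation $I_R(y,u) = Z_{R,\kappa}/\alpha$ has a unique positive root $y^\star_\alpha$, and $\calH_n(\alpha) = \{\theta : |T_n(\theta)| < y^\star_\alpha\}$.

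Now write $d = \Xbar_n - \theta$. Then $S_n = nd$ and $V_n^2 = n\shat_n^2 + nd^2$, so the condition $T_n^2 < (y^\star_\alpha)^2$ becomes $d^2\,(n - (y^\star_\alpha)^2) < (y^\star_\alpha)^2\shat_n^2$.
\begin{itemize}
\item If $n > (y^\star_\alpha)^2$, this is $|d| < W_n^{\mix,\iwr}(\alpha;R,\kappa)$, which gives the displayed interval.
\item Otherwise the inequality holds for every $\theta$, so the set is $\Re$.
\end{itemize}
The edge case $V_n = 0$ and $\alpha \ge 1$ is trivial, since then the set is $\Re$ and it contributes at most $1$ to the risk.
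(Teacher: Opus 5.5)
Your proposal is correct and follows essentially the same route as the paper. The closed form is the truncated-Gaussian computation of Proposition~\ref{prop:mixture-evals}; validity comes from Proposition~\ref{prop:mixture-is-valid} with the Gin\'e--G\"otze--Mason uniform integrability of Lemma~\ref{lem:Ereg_ui}, which indeed holds on $\dan(\mu)$; and the interval comes from thresholding and solving $|S_n(\theta)/V_n(\theta)|<y^\star_\alpha$, exactly as in the paper's proof. One small slip does not affect the argument: $I_R(0,u)=\int_{-R}^{R}e^{-u\lambda^2}\,\mathrm{d}\lambda$ is strictly smaller than $Z_{R,\kappa}$ (because $u>1/(2\kappa^2)$), not equal to it, and this still guarantees a unique positive root $y^\star_\alpha$ for every $\alpha\in(0,1)$.
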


We note that due to the nice shape of $I_R$, the implicit equation $I_R(y_\alpha^\star,(\kappa^2 + 1)/(2\kappa^2)) = Z_{R,\kappa}/\alpha$ is easy to approximate to arbitrary precision via root finding. Thus, the \aphci of Theorem~\ref{thm:iwr-mixture-aphci} is computationally tractable and easy to implement. See Section~\ref{sec:simulations} for experiments.

Theorem~\ref{thm:iwr-mixture-aphci} is not a distribution-uniform result. This is because it is unclear whether $(\sup_{\lambda\in[-R,R]}E_n^\iwr(\lambda))$ is $\calP$-uniformly integrable, whereas it is pointwise uniformly integrable by Lemma~\ref{lem:Ereg_ui}. Proving $\calP$-uniform integrability of $(E_n^\iwr(\lambda))$ (even without the supremum) seems to necessitate extending the work of \citet{gine1997student} on the asymptotics of the Student t-distribution to the distribution-uniform setting, an interesting problem in and of itself which we leave for future inquiry.

The asymptotic width of $W_n^{\mix,\iwr}$ is particularly easy to analyze under a finite second-moment assumption. Indeed, if $\shat_n^2 \xrightarrow{a.s.}\sigma^2$, then 
\begin{equation}
\sqrt{n}W_n^{\mix,\iwr}(\alpha;R,\kappa) \xrightarrow{a.s.} \sigma y_\alpha^\star. 
\end{equation}
Recalling the discussion in Section~\ref{sec:iwr-evariable}, we had $W_n^\wald \to \sigma z_{1-\alpha/2}$. Experimentally, we find that $y_\alpha^\star<2z_{1-\alpha/2}$ for most reasonable values of $R$ and $\kappa$, implying that the asymptotic widths of these mixture e-variables have asymptotic width at most twice that of the Wald CI. Moreover, the bounds are very stable in practice regardless of the choice of $R$ and $\kappa$.  Analytically, Lemma~\ref{lem:ystar-bound} in Appendix~\ref{app:truncated-gaussian} shows that for $\kappa=1$, $y_\alpha^\star \to 2\sqrt{\log(\sqrt{2}/\alpha)}$ as $R\to\infty$. We thus obtain the desired square root behavior on $\log(1/\alpha)$ which was missing in Section~\ref{sec:ex-ante-tuning}.

\subsection{Event partitioning and the R-WS asymptotic e-variable}
\label{sec:rws-evariable}

Instead of directly defining a sequence of random variables whose means are asymptotically bounded by one, an alternative method of constructing an asymptotic e-variable is to perform event partitioning as follows. We take a ``nice'' sequence of random variables $(G_n)$ that has good behavior on events $(A_n)$, and truncate the sequence by the probability that it misbehaves, i.e., the probabilities of $(A_n^c)$. Formally, consider the following observation.

\begin{observation}
\label{obs:truncation}
    Let $(G_n)$ be a sequence of nonnegative random variables, and let $(A_n)$ be a sequence of events such that $\limsup_n \E_P[G_n \ind{A_n}]\leq 1$ for all $P\in\calP$. Then, for any sequence of scalars $(T_n)$, such that $T_n P(A_n^c) \to 0$ for all $P\in\calP$,  
    $E_n = G_n\wedge T_n$ defines an asymptotic e-variable for $\calP$. Meanwhile, if $\limsup_n\sup_{P\in\calP} \E_P[G_n \ind{A_n}]\leq 1$ and $T_n \sup_{P\in\calP} P(A_n^c) \to 0$ then $(E_n)$ is a $\calP$-uniform asymptotic e-variable. 
\end{observation}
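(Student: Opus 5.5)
The plan is to split the expectation of $E_n = G_n\wedge T_n$ along the partition $\{A_n, A_n^c\}$ and bound each piece separately. Fix $P\in\calP$. Since $E_n\geq 0$, nonnegativity is immediate. We write
\begin{equation*}
\E_P[E_n] = \E_P[(G_n\wedge T_n)\ind{A_n}] + \E_P[(G_n\wedge T_n)\ind{A_n^c}].
\end{equation*}
On $A_n$ we use $G_n\wedge T_n\leq G_n$, so the first term is at most $\E_P[G_n\ind{A_n}]$. On $A_n^c$ we use $G_n\wedge T_n\leq T_n$, so the second term is at most $T_n P(A_n^c)$. Here we implicitly take $T_n\geq 0$, which is harmless: if some $T_n<0$ then $E_n$ would fail to be nonnegative, so the observation should be read with nonnegative truncation levels, or with $T_n$ replaced by $T_n\vee 0$, which leaves the bound intact.

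For the pointwise statement, we take $\limsup_n$ of the resulting bound $\E_P[E_n]\leq \E_P[G_n\ind{A_n}] + T_nP(A_n^c)$. By subadditivity of $\limsup$, this is at most
\begin{equation*}
\limsup_n \E_P[G_n\ind{A_n}] + \lim_n T_nP(A_n^c) \leq 1 + 0.
\end{equation*}
Taking the supremum over $P\in\calP$ then gives~\eqref{eq:asymp-evar}.

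For the uniform statement, we first take $\sup_{P\in\calP}$ of the same bound, using $\sup(a+b)\leq \sup a+\sup b$:
\begin{equation*}
\sup_P\E_P[E_n]\leq \sup_P\E_P[G_n\ind{A_n}] + T_n\sup_P P(A_n^c).
\end{equation*}
Taking $\limsup_n$ and applying the two uniform hypotheses then yields~\eqref{eq:du-asymp-evar}.

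There is no real obstacle in this argument. The only points needing care are the sign convention on $T_n$ noted above and keeping the supremum over $\calP$ on the correct side of the limit in each case.
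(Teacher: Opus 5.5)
Your proposal is correct and follows the paper's proof: split $\E_P[E_n]$ over $A_n$ and $A_n^c$, bound the first piece by $\E_P[G_n\ind{A_n}]$ and the second by $T_nP(A_n^c)$, then take limits, keeping the supremum over $\calP$ inside the limit for the uniform case. Your version is slightly more careful than the paper's. You write inequalities where the paper writes equalities, and you note that $T_n\geq 0$ is needed for $E_n$ to be nonnegative.
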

\begin{proof}
    Decompose $\E_P[E_n] = \E_P[G_n \ind{A_n}] + \E_P[T_n\ind{A_n^c}] = \E_P[G_n \ind{A_n}] + T_n P(A_n^c)$. 
    By assumption, the first term goes to one as $n\to\infty$ and the second goes to zero. 
\end{proof}

We will apply Observation~\ref{obs:truncation} to a sequence of random variables designed to approximate the process $(E_n^\star(\lambda))$ in the limit, where for any $\lambda\in\Re$, 
\begin{equation}
\label{eq:howard-evar}
    E_n^\star(\lambda) := \exp \left \{\lambda  \sum_{i=1}^n (X_i - \theta) -  \frac{\lambda^2}{2}U_n^\otimes \right \}, \quad U_n^\otimes := \frac{1}{3}\sum_{i=1}^n ((X_i - \theta)^2 + 2\sigma^2). 
\end{equation}
\citet[Table 3]{howard2020time} demonstrate that $(E_n^\star(\lambda))$ is a supermartingale with initial value one for distributions $(X_n)$ with constant conditional mean $\theta$ and finite variance $\sigma^2$. Therefore, they satisfy $\E_P[E_\tau^\star(\lambda)]\leq 1$ for all stopping times $\tau$ under such distributions $P$. The following theorem is based on building a process that approximates $E_n^\star$ in the limit and bounding its probability of failure. It relies on a nonasymptotic SLLN from \citet{ruf2025concentration} and is thus eponymously named the \rws asymptotic e-variable. The proof can be found in Appendix~\ref{proof:rws-variable}.

\begin{theorem}
\label{thm:rws-evariable}
Let $(X_n)$ be a sequence of iid random variables. 
Fix $\gamma\in(0,1/2)$ and $\delta>0$. Let $(T_n) \nearrow \infty$ and $(\eps_n) \searrow 0$ be nonnegative sequences satisfying $T_n = o(n^{\delta\gamma/2} \eps_n^{2+\delta/2})$. 
Then $(E_n^\rws(\theta; \lambda,\eps_n,T_n))$ is an asymptotic e-variable for all distributions in $\calM_{2+\delta}(\theta)$, where 
\begin{equation}
\label{eq:rws-evar-general}
    E_n^\rws(\theta; \eps,T) := \int_{\lambda\in\Re}\exp\left(\lambda S_n(\theta) - n(\shat_n^2 + \eps_n)\frac{\lambda^2}{2}\right)\d F(\lambda) \wedge T_n, 
\end{equation}
and $F$ is any distribution over $\Re$ (including a point mass). 
Moreover, if the sequence $(X_n)$ has a $\calP$-uniformly bounded $2+\delta$ moment for $\calP\subseteq \calM_{2+\delta}(\theta)$ then $(E_n^\rws(\theta; \lambda,\eps_n,T_n))$ is a $\calP$-uniform asymptotic e-variable. 
\end{theorem}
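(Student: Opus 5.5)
The plan is to apply Observation~\ref{obs:truncation} with $G_n$ equal to the untruncated mixture
\[
G_n := \int_{\Re}\exp\Big(\lambda S_n(\theta) - n(\shat_n^2+\eps_n)\tfrac{\lambda^2}{2}\Big)\,\d F(\lambda).
\]
The good event will be
\[
A_n := \Big\{ n(\shat_n^2+\eps_n) \ge U_n^\otimes\Big\} \supseteq \Big\{ \big|\tfrac1n\textstyle\sum_{i\le n}(X_i-\theta)^2 - \sigma^2\big| \le \eps_n/2,\ |\Xbar_n-\theta|^2\le \eps_n/4 \Big\}.
\]
The inclusion is a short computation. Write $\shat_n^2 = \frac1n\sum(X_i-\theta)^2 - (\Xbar_n-\theta)^2$. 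Then $n^{-1}U_n^\otimes = \frac13\cdot\frac1n\sum(X_i-\theta)^2 + \frac23\sigma^2$. On the displayed event this is at most $\frac13\cdot\frac1n\sum(X_i-\theta)^2 + \frac23\big(\frac1n\sum(X_i-\theta)^2+\eps_n/2\big) \le \shat_n^2 + \eps_n$. Constants may need adjusting.

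On $A_n$ the quadratic term is larger, so for every $\lambda$ the integrand is at most $E_n^\star(\lambda)$. By Tonelli,
\[
\E_P[G_n\ind{A_n}] \le \int \E_P[E_n^\star(\lambda)]\,\d F(\lambda) \le 1.
\]
This uses the supermartingale property of $E_n^\star$ from \citet{howard2020time}. The bound is nonasymptotic and automatically uniform over all $P\in\calM_2(\theta)$, which gives the first hypothesis of Observation~\ref{obs:truncation} in both the pointwise and the uniform version.

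It then remains to show $T_n\sup_P P(A_n^c)\to 0$, or the pointwise version. This needs a polynomial-rate bound on the deviation probabilities of two sample means. The first is $\frac1n\sum (X_i-\theta)^2$ around $\sigma^2$, for iid variables $Y_i=(X_i-\theta)^2$ with a finite $1+\delta/2$ moment. The second is $\Xbar_n$ around $\theta$, which has a $2+\delta$ moment.

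For this I would invoke the nonasymptotic SLLN of \citet{ruf2025concentration}. For iid mean-zero variables with a finite $p$-th moment, $p\in(1,2)$, it gives $P(|\frac1n\sum Y_i|>\eps)\lesssim \E|Y|^p/(n^{p-1}\eps^p)$, or a rate of that type, with constants depending only on the moment. With $p=1+\delta/2$ this yields $P(A_n^c)\lesssim C_P\, n^{-\delta/2}\eps_n^{-(1+\delta/2)}$ plus a smaller term for $\Xbar_n$. The exponent $\gamma$ and the power $\eps_n^{2+\delta/2}$ in the hypothesis suggest the actual bound is a weaker form, something like $n^{-\delta\gamma/2}\eps_n^{-(2+\delta/2)}$ after truncating $Y_i$ at level $n^{\gamma}$ or similar. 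I would match whichever form Ruf et al.\ provide, so that the hypothesis $T_n=o(n^{\delta\gamma/2}\eps_n^{2+\delta/2})$ is exactly what makes $T_nP(A_n^c)\to0$.

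In the pointwise case, $\E_P|X_1-\theta|^{2+\delta}<\infty$ makes $C_P$ finite, so the second hypothesis holds. In the uniform case, the $\calP$-uniform bound on the $2+\delta$ moment gives $\sup_P C_P<\infty$; it also bounds $\sigma_P$ uniformly, which controls $\E|Y-\sigma_P^2|^{1+\delta/2}$. So $T_n\sup_P P(A_n^c)\to0$, and the uniform half of Observation~\ref{obs:truncation} applies.

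The main obstacle is the concentration step. I expect the care to go into centering $Y_i$ at $\sigma_P^2$ with moments controlled uniformly, and into getting the rate in the precise form demanded by the hypothesis, including the role of $\gamma$.
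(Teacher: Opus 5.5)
Your proposal is correct and follows the paper's proof essentially step for step. The paper also works on a good event where $n(\shat_n^2+\eps_n)\ge U_n^\otimes$ (it uses the two-sided version $|\shat_n^2-U_n^\otimes/n|\le\eps_n$). On that event it dominates the integrand by $E_n^\star(\lambda)$ and applies Tonelli with the supermartingale property from \citet{howard2020time}. It then controls $T_nP(A_n^c)$ by a union bound over $\frac1n\sum_i(X_i-\theta)^2$ and $(\Xbar_n-\theta)^2$, using Lemma~\ref{lemma:nonasymp-slln}: the tail estimate $U(x)\le x^{-\delta/2}\E|Y-\sigma^2|^{1+\delta/2}$ gives exactly the $n^{-\delta\gamma/2}\eps_n^{-(2+\delta/2)}$ rate you guessed, and your one-sided event and explicit centering of $Y_i$ at $\sigma_P^2$ are, if anything, slightly more careful than the paper's write-up.
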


As made evident in the statement of Theorem~\ref{thm:rws-evariable}, a benefit of the truncation approach is that it can elegantly handle mixtures while sidestepping issues of uniform integrability. Indeed, if $(G_n(\lambda)\wedge T_n)$ is an asymptotic e-variable for all $\lambda\in\Lambda$, then so too is $(\int G_n(\lambda)\d F\wedge T_n)$ for any distribution $F$ over $\Lambda$. 

In particular, as we were tempted to do in Section~\ref{sec:aphcis-via-mom} but could not, we take can $F$ to be a centered Gaussian in the above theorem. Taking $\eps_n = 1/\log(n)$ and $\gamma = 0.49$ for concreteness, we obtain the following corollary. The calculation is standard, but is presented for completeness in Appendix~\ref{proof:rws-normal-mixture}. 

\begin{corollary}
\label{cor:rws-normal-mixture}
Let $(X_n)$ be a sequence of iid random variables. 
Fix $C,\rho>0$ and let $\hat{u}_n = n(\shat_n^2 + \log^{-1}n)$. Then the sequence $(E_n^\rws(\theta;v))$ is an asymptotic e-variable for all distributions in $\calM_{2+\delta}(\theta)$ where 
\begin{equation}
\label{eq:rws-evar-mix}
    E_n^\rws(\theta; \rho) = \frac{1}{\sqrt{\rho \hat{u}_n  + 1}} \exp\left\{\frac{\rho S_n^2(\theta)}{2\rho\hat{u}_n+ 2}\right\} \wedge C\cdot n^{\delta \cdot 0.24}. 
\end{equation}
Moreover, if the sequence $(X_n)$ has a $\calP$-uniformly bounded $2+\delta$ moment for some $\calP\subseteq\calM_{2+\delta}(\theta)$, then $(E_n^\rws(\theta;\rho))$ is a $\calP$-uniform asymptotic e-variable.  
\end{corollary}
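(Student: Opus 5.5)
This is a direct specialization of Theorem~\ref{thm:rws-evariable}, so the plan is to pick the free parameters and then compute the resulting Gaussian integral in closed form. I will take $\eps_n = 1/\log n$ and $\gamma = 0.49$, so that $n(\shat_n^2+\eps_n) = \hat u_n$. I will take the mixing distribution $F = N(0,\rho)$, a centered Gaussian with variance $\rho$. For the truncation level I set $T_n = C n^{0.24\delta}$.

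\textbf{Growth condition.} First I check the hypothesis $T_n = o(n^{\delta\gamma/2}\eps_n^{2+\delta/2})$. With $\gamma=0.49$ we have $n^{\delta\gamma/2} = n^{0.245\delta}$. Hence the required ratio is
\[
\frac{T_n}{n^{0.245\delta}(\log n)^{-(2+\delta/2)}} = C\, n^{-0.005\delta}(\log n)^{2+\delta/2}.
\]
This tends to $0$ because any polynomial decay beats polylogarithmic growth. Monotonicity also holds: $T_n\nearrow\infty$ and $\eps_n\searrow 0$, at least for $n\ge 3$. Only the tail of the sequence matters asymptotically, so small $n$ can be ignored or redefined.

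\textbf{Closed form of the mixture.} Next I evaluate the mixture:
\[
\int_{\Re} \exp\left(\lambda S - \frac{\lambda^2}{2}\hat u\right)\frac{1}{\sqrt{2\pi\rho}}e^{-\lambda^2/(2\rho)}\,\d\lambda .
\]
Completing the square, the quadratic coefficient in $\lambda$ is $(\hat u + 1/\rho)/2$. The standard Gaussian integral then gives
\[
\frac{1}{\sqrt{\rho\hat u+1}}\exp\left\{\frac{S^2}{2(\hat u+1/\rho)}\right\} = \frac{1}{\sqrt{\rho\hat u+1}}\exp\left\{\frac{\rho S^2}{2\rho\hat u+2}\right\}.
\]
This matches \eqref{eq:rws-evar-mix} with $S=S_n(\theta)$. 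The pointwise conclusion and the $\calP$-uniform conclusion then both follow verbatim from the two parts of Theorem~\ref{thm:rws-evariable}.

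\textbf{Main obstacle.} The only delicate point is the exponent bookkeeping. The exponent of $T_n$ must sit strictly below $\delta\gamma/2$, with enough slack to absorb the $(\log n)^{2+\delta/2}$ factor. The choice $0.24 < 0.245$ is what secures this. I also need $\hat u_n>0$ so the formula is well defined, and this holds automatically since $\eps_n>0$.
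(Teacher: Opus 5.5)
Your proposal is correct and follows the paper's proof essentially verbatim. Like the paper, you specialize Theorem~\ref{thm:rws-evariable} with $\eps_n = 1/\log n$, $\gamma = 0.49$, $T_n = C n^{0.24\delta}$ and a centered Gaussian mixing distribution of variance $\rho$, then evaluate the Gaussian integral in closed form. Your explicit check that $C n^{-0.005\delta}(\log n)^{2+\delta/2} \to 0$ supplies the growth-condition verification that the paper only asserts.
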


The truncation in~\eqref{eq:rws-evar-general} and~\eqref{eq:rws-evar-mix} is an explicit bound on the growth of the e-variable and translates directly to a limit on the size of the resulting \aphci. Indeed, for a truncation sequence $(T_n)$, the \aphci will be vacuous for all $n$ such that $T_n < 1/\alpha$. In~\eqref{eq:rws-evar-mix} for example, if we posit the existence of a third moment $(\delta=1)$, then to have a non-vacuous confidence interval at $\alpha=0.05$ we require $C n^{0.24}\geq 1/0.05$, i.e., $n\approx  2.64\cdot 10^5 / C^{1/0.24}$. One can of course take $C$ extremely large to compensate, but we emphasize that $C$ should not be data-dependent and must be fixed in advance. Further, as is evident from examining the proof, $C$ determines when the asymptotics kick in. Thus, larger values of $C$ will result in more false positives for modest values of $n$. We suggest setting $C=100$ as the standard. As shown by Table~\ref{tab:posthoc_risk}, this choice of $C$ results in an empirical risk well below one. 

While the \rws asymptotic e-variable has explicit truncation, we note that the \iwr e-variable, both in Theorem~\ref{thm:iwr-asymp-evar} and its mixture, have an implicit form of truncation. 
In particular, in the case of ex ante tuning (Section~\ref{sec:ex-ante-tuning}), since $\lambda$ cannot depend on $\alpha$, as $\alpha$ drifts further from the anchor $\alpha_0$, the e-variable becomes less powerful. And in the method of mixtures (Section~\ref{sec:aphcis-via-mom}), the fact that we can only mix over a finite range $[-R,R]$ inherently limits the power of the e-variable. Thus, there seems to be no way to escape some form of truncation, whether implicit or explicit, for asymptotic e-variables. We posit that this is true in general, not just for the e-variables presented in this paper. We leave it as an open question whether this observation can be formalized. 

Let us state the \aphci that results from the asymptotic e-variable in Corollary~\ref{cor:rws-normal-mixture}. The proof follows the usual formula: we find those $\theta$ such that $E_n^\rws(\theta;\rho) < 1/\alpha$, taking into account that the CI is vacuous if the truncation level is below $1/\alpha$. The result then follows from Proposition~\ref{prop:cs-via-evalue}.

\begin{theorem}
\label{thm:rws-aphci}
    Let $(X_n)$ be a sequence of iid random variables drawn from some distribution in $\calM_{2+\delta}(\mu)$ for some $\delta>0$. 
    Fix $C,\rho>0$ and let $\hat{u}_n = n(\shat_n^2 + \log^{-1}n)$. 
    Then $(\calH_n^\rws(\alpha;\rho))$ is an \aphci for $\mu$, where 
    \begin{equation*}
        \calH_n^\rws(\alpha;\rho) = (\Xbar_n \pm W_n^\rws(\alpha;\rho)) \text{~ where ~} W_n^\rws(\alpha;\rho) := \sqrt{\frac{2\rho\hat{u}_n + 2}{n^2\rho}\log\left(\frac{\sqrt{\rho \hat{u}_n + 1} }{\alpha}\right),}
    \end{equation*}
    as long as $C n^{\delta\cdot 0.24}\geq 1/\alpha$,  otherwise $\calH_n^\rws(\alpha;\rho)=\Re$. If $(X_n)$ have a $\calP$-uniformly bounded $2+\delta$ moment, then $(\calH_n^\rws(\rho))$ is a $\calP$-uniform \aphci. 
\end{theorem}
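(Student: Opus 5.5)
The plan is to combine Corollary~\ref{cor:rws-normal-mixture} with Proposition~\ref{prop:cs-via-evalue}. The only genuine computation is inverting the inequality $E_n^\rws(\theta;\rho)<1/\alpha$ in $\theta$. By Corollary~\ref{cor:rws-normal-mixture}, for each $\theta$ the sequence $(E_n^\rws(\theta;\rho))$ is an asymptotic e-variable for $\calM_{2+\delta}(\theta)$. It is moreover a $\calP$-uniform one whenever $\calP$ has uniformly bounded $2+\delta$ moment. So it suffices to show two things: that $\calH_n^\rws(\alpha;\rho)=\{\theta: E_n^\rws(\theta;\rho)<1/\alpha\}$ for every $\alpha>0$ (possibly up to boundary points, discussed below), and that this family meets the monotonicity and right-continuity hypotheses of Proposition~\ref{prop:cs-via-evalue}. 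The forward direction of that proposition, or directly the risk computation $\sup_{\alpha}\ind{\mu\notin\calH_n(\alpha)}/\alpha = E_n(\mu)$, then transfers (uniform) asymptotic e-validity into (uniform) \aphci validity.

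The inversion goes as follows. Write $\hat u_n$ as in the statement and note a structural fact. Since $\shat_n^2$ is centered at $\Xbar_n$, the quantity $\hat u_n$ does not depend on $\theta$. Only $S_n(\theta)=n(\Xbar_n-\theta)$ varies with $\theta$. There are two cases.
\begin{itemize}
\item If $Cn^{0.24\delta}<1/\alpha$, the truncated e-variable is always below $1/\alpha$, so every $\theta$ is accepted and the set is $\Re$.
\item Otherwise, the truncation cannot by itself push $E_n$ below $1/\alpha$. In that case $E_n^\rws(\theta;\rho)<1/\alpha$ is equivalent to the untruncated inequality $(\rho\hat u_n+1)^{-1/2}\exp\{\rho S_n^2/(2\rho\hat u_n+2)\}<1/\alpha$.
\end{itemize}
In the second case, take logs to obtain
\[
n^2(\Xbar_n-\theta)^2 < \frac{2\rho\hat u_n+2}{\rho}\log\frac{\sqrt{\rho\hat u_n+1}}{\alpha}.
\]
This is exactly $|\Xbar_n-\theta|<W_n^\rws(\alpha;\rho)$. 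When the log is nonpositive the set is empty. That is harmless, since $W_n^\rws$ should be read as $0$ there, or one can note it only helps validity.

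The one delicate step is the boundary behaviour. The equality case $E_n=1/\alpha$ corresponds to endpoints of the open interval. At the threshold $Cn^{0.24\delta}=1/\alpha$ the statement's convention gives an interval, whereas the set $\{E_n<1/\alpha\}$ would exclude $\theta$ only when $T_n<1/\alpha$ strictly. I would handle this by bounding the risk directly rather than insisting on exact equality of sets. Check that $\ind{\mu\notin\calH_n^\rws(\alpha;\rho)}\le \ind{E_n^\rws(\mu;\rho)\ge 1/\alpha}$ for all $\alpha$. This holds in every case above: if $\mu$ is excluded then the truncation level is at least $1/\alpha$ and $|\Xbar_n-\mu|\ge W_n$, so the untruncated quantity is at least $1/\alpha$. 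Consequently
\[
\sup_{\alpha>0}\ind{\mu\notin\calH_n(\alpha)}/\alpha\le E_n^\rws(\mu;\rho),
\]
and therefore $\risk(\calH_n;\mu,P)\le \E_P[E_n^\rws(\mu;\rho)]$. Taking $\limsup_n$, either outside or inside $\sup_P$, and invoking Corollary~\ref{cor:rws-normal-mixture} finishes both the pointwise and the uniform claims. This direct route sidesteps verifying the right-continuity hypotheses exactly, which is the only place I expect friction.
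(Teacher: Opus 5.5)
Your proposal is correct and follows the paper's route: invert $E_n^\rws(\theta;\rho)<1/\alpha$ from Corollary~\ref{cor:rws-normal-mixture}, note that the set is $\Re$ when the truncation level is below $1/\alpha$, and conclude via the forward direction of Proposition~\ref{prop:cs-via-evalue}. The boundary worry is unnecessary. The sets coincide exactly for every $\alpha$ (including at $Cn^{0.24\delta}=1/\alpha$, and with $W_n^\rws$ read as $0$ when the logarithm is nonpositive), and that forward direction never uses monotonicity or right-continuity. Still, your direct bound $\sup_{\alpha>0}\ind{\mu\notin\calH_n(\alpha)}/\alpha\le E_n^\rws(\mu;\rho)$ is a clean way to make the argument airtight.
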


A reasonable value of $\rho$ in Theorem~\ref{thm:rws-aphci} is $\rho=2$, but we show in Appendix~\ref{app:sims} that reasonable values of $\rho$ do not have much of an effect on the width. In fact, the asymptotic width is independent of $\rho$ and scales as $\sqrt{\log(n)/n}$. 
More precisely, a bit of arithmetic gives that  $\sqrt{n}W_n^\rws(\alpha;\rho) \xrightarrow[n\to\infty]{a.s.} \infty$, and 
\begin{equation}
  \sqrt{\frac{n}{\log n}}W_n^\rws(\alpha;\rho) \xrightarrow[n\to\infty]{a.s.} \sigma,  
\end{equation}
just as Robbins' original mixture (see \citep[Appendix C.3]{chugg2025closed}). Note that this rate of $\sqrt{\log(n)/n}$ is distinct from the rate of $1/\sqrt{n}$ achieved by the previous \aphcis. 
Such a rate is common in the literature on time-uniform confidence \emph{sequences}, as opposed to confidence \emph{intervals}. Confidence sequences cannot scale as $1/\sqrt{n}$ due to the law of the iterated logarithm~\citep{robbins1970statistical}, and Robbins' mixture is a classical way to construct such objects. This provides us with a hint that $(\calH_n^\rws(\alpha;\rho))$ is perhaps satisfying a stronger guarantee than the other \aphcis given thus far. This is indeed the case. In fact, (a mildly modified version of) $(\calH_n^\rws)$ is an example of a \emph{post-hoc asymptotic confidence sequence} and $(E_n^\rws)$ an example of an \emph{asymptotic e-process}. We will introduce these objects in Section~\ref{sec:asymp-eprocesses}.

\subsection{Simulations}
\label{sec:simulations}

\begin{figure}[t]
    \centering
    \begin{subfigure}[b]{0.45\textwidth}
        \centering
        \includegraphics[width=\linewidth]{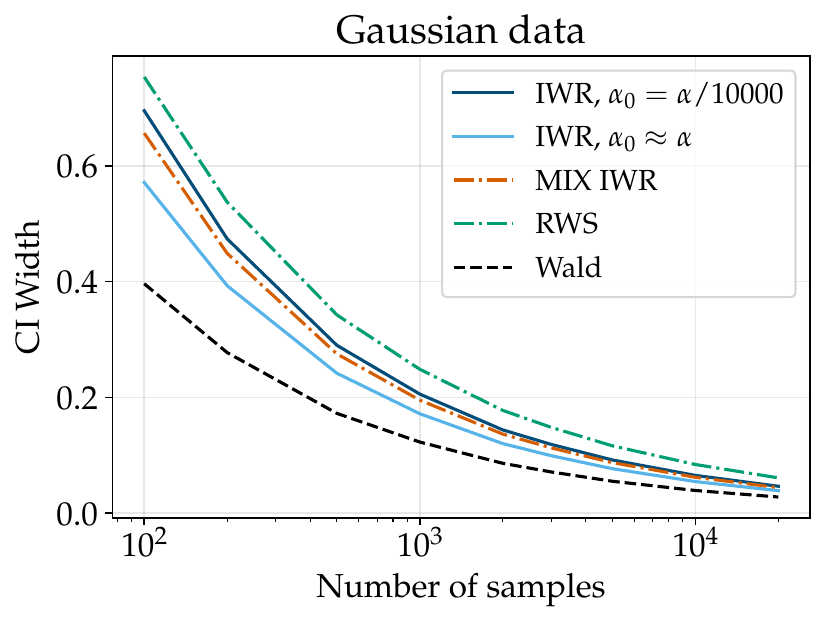}
    \end{subfigure}
    \hspace{0.2cm}
    \begin{subfigure}[b]{0.45\textwidth}
        \centering
        \includegraphics[width=\linewidth]{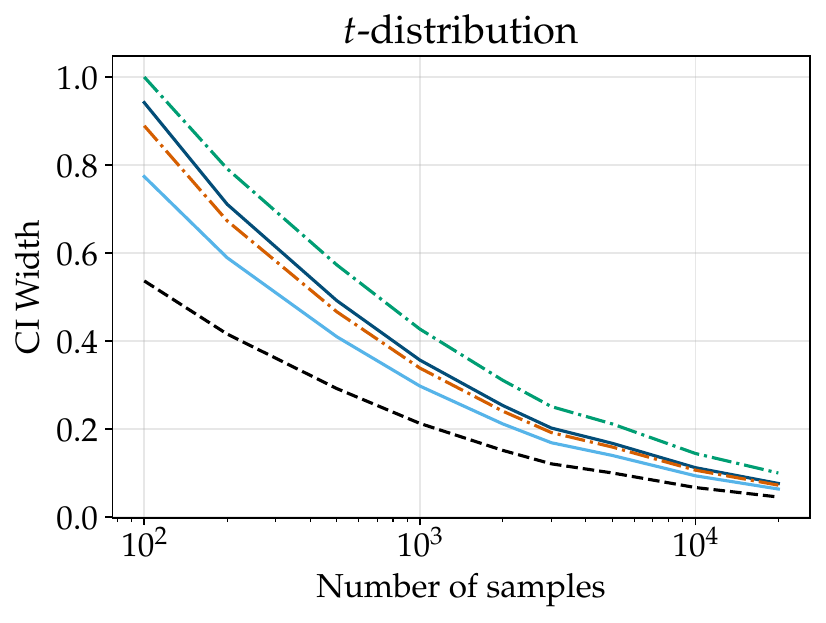}
    \end{subfigure}
    \caption{The width of four \aphcis compared to the Wald CI for Gaussian data and heavy-tailed data coming from a t-distribution with three degrees of freedom. ``IWR'' refers to the \aphci of Theorem~\ref{thm:iwr-aphci} with $\lambda$ chosen via ex ante anchoring. 
    ``MIX IWR'' refers to the \aphci of Theorem~\ref{thm:iwr-mixture-aphci}. 
    We use $\rho=2$ for $\calH_n^\rws$. See Appendix~\ref{app:sims} for further details.}
    \label{fig:widths}
\end{figure}

\begin{figure}[t]
    \centering
    \begin{subfigure}[t]{0.45\textwidth}
        \includegraphics[height=5cm]{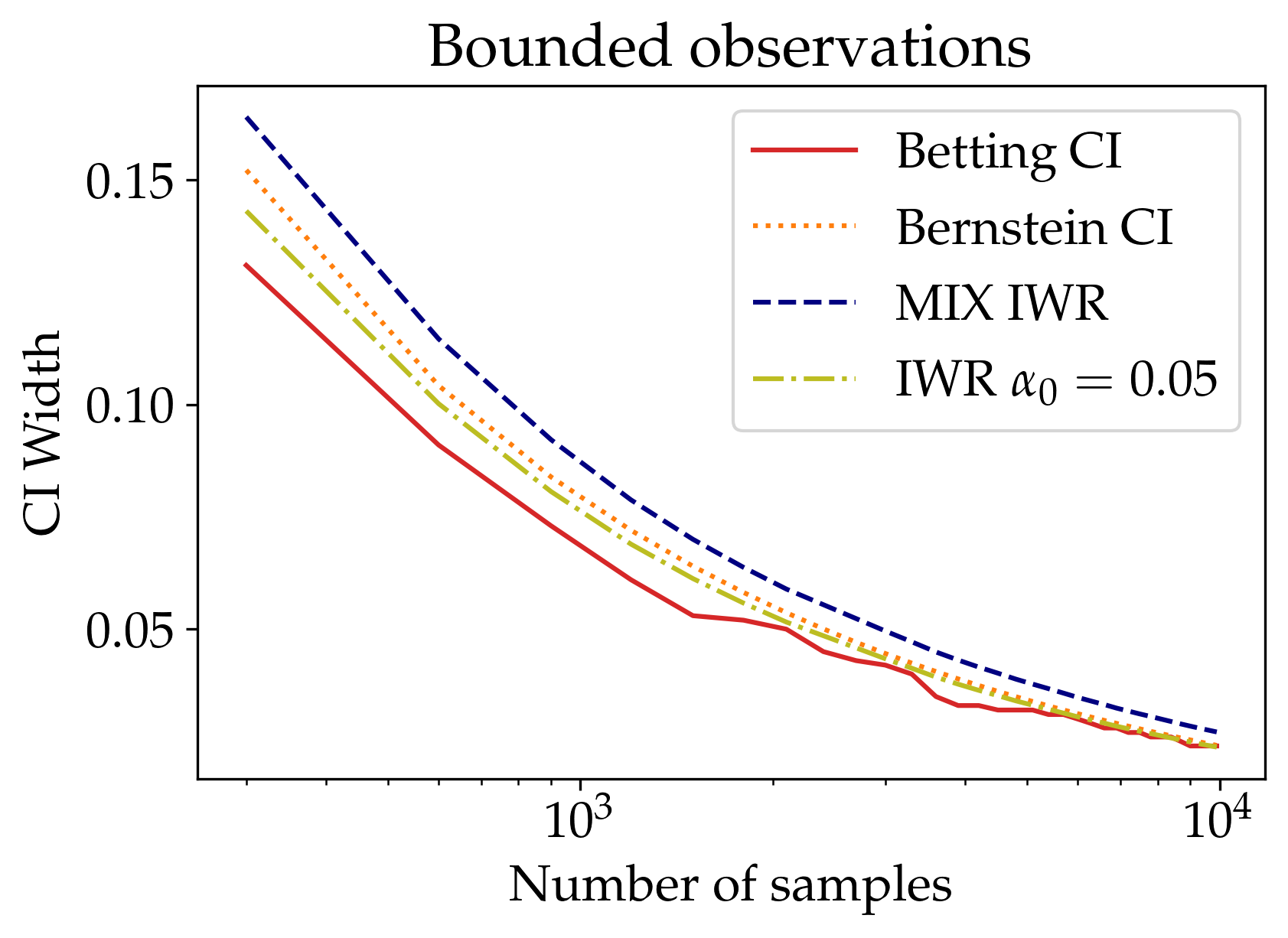}
    \end{subfigure}
    \begin{subfigure}[t]{0.45\textwidth}
        \includegraphics[height=5cm]{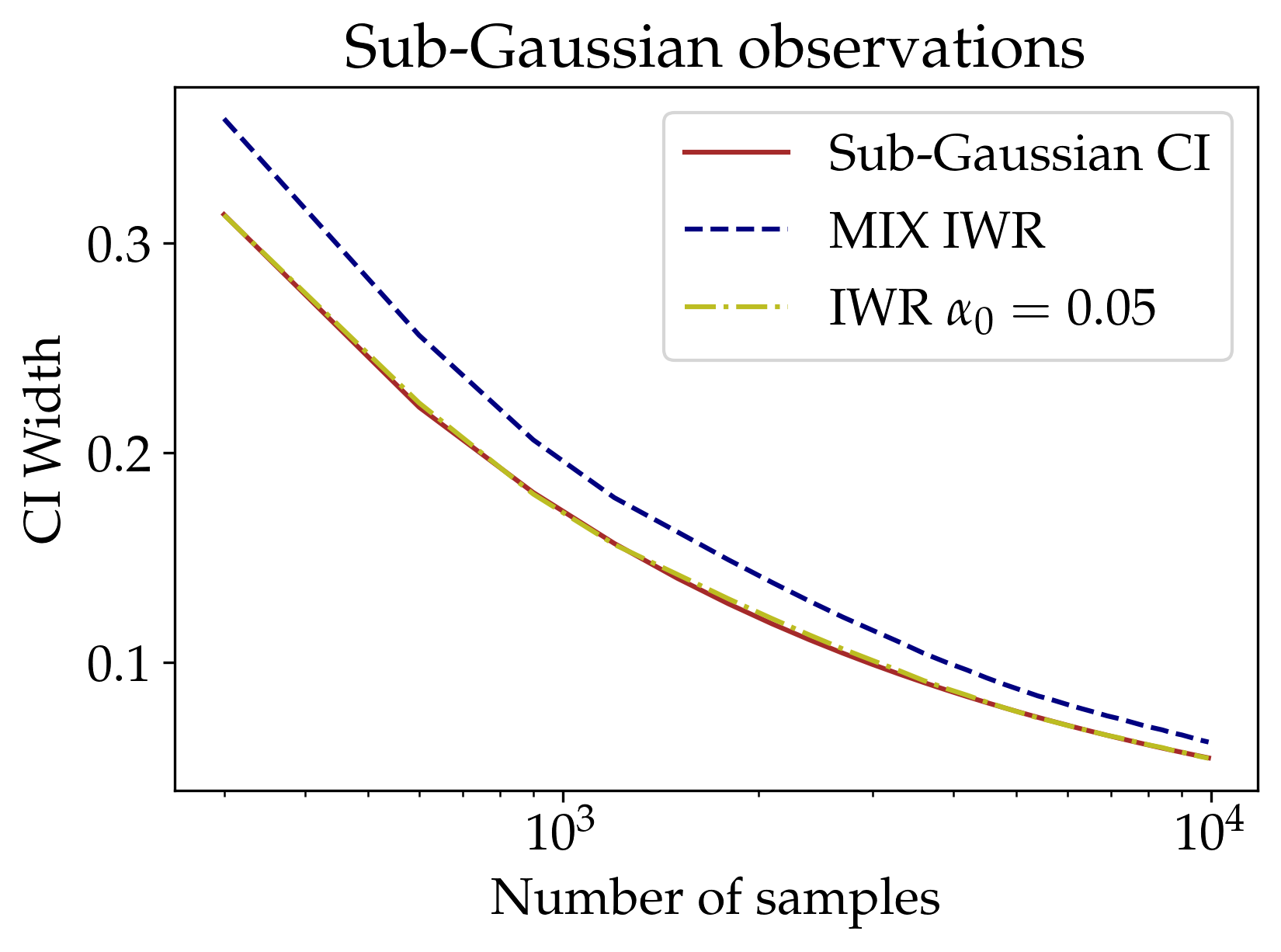}
    \end{subfigure}
    \caption{Comparison of the widths of some of our \aphcis compared to nonasymptotic CIs based on (nonasymptotic) e-variables. For bounded data, which we take to be iid Bernoulli(0.25), we compare to the classical Bernstein CI and also to the state-of-the-art betting-based empirical Bernstein CI of \citet{waudby2024estimating}. For sub-Gaussian observations (generated as $N(0,1)$ random variables) we compare to the standard CI based on the Chernoff method---see the text for more detail. We use $\alpha=0.05$ across all simulations, and we implement $\calH_n^\iwr$ with $\lambda = \sqrt{2\log(2/\alpha)}$. }
    \label{fig:asymp_vs_nonasymp}
\end{figure}

Figure~\ref{fig:widths} plots the widths of the three \aphcis discussed in Section~\ref{sec:constructing-aphcis}, given by  Theorems~\ref{thm:iwr-aphci}, \ref{thm:iwr-mixture-aphci} and \ref{thm:rws-aphci}, against the width of the Wald CI in~\eqref{eq:wald-CI}. As expected, the Wald CI is reliably tighter (but, of course, not post-hoc valid), with the gap closing as the sample size increases.  The difference in widths between the Wald CI and our \aphcis typically less than 0.05 for $n=10^4$. 

For $\calH_n^\iwr$ we choose $\lambda$ via ex ante anchoring (Section~\ref{sec:ex-ante-tuning}). That is, we fix $\alpha_0$ and take $\lambda = \lambda_0= \sqrt{2\log(2/\alpha_0)}$. We test two regimes for the anchor $\alpha_0$: When $\alpha_0 \ll \alpha$ (by a factor of $10^4$) and when $\alpha_0 \approx \alpha$. When $\alpha_0 \approx \alpha$, $\calH_n^\iwr$ is tighter than $\calH_n^{\mix,\iwr}$. In fact, as  foreshadowed in Section~\ref{sec:ex-ante-tuning}, it takes significant divergence between $\alpha$ and $\alpha_0$ for $\calH_n^{\mix,\iwr}$ to be tighter. A practitioner thus faces a tradeoff between $\calH_n^{\iwr}$ and $\calH_n^{\mix,\iwr}$: For many values of $\alpha$ and $\alpha_0$ the former will be tighter, but the latter has a better worst-case performance. 

The \aphci based on the \rws e-variable is looser than both $\calH_n^\iwr$ and $\calH_n^{\mix,\iwr}$. As we hinted at in Section~\ref{sec:rws-evariable} and to be fleshed out in Section~\ref{sec:asymp-eprocesses}, this is principally because $\calH_n^\rws$ is providing a stronger guarantee than the other two \aphcis, and is in fact a post-hoc asymptotic confidence sequence.

A natural question is how the width of our large-sample CIs, which depend on asymptotic e-variables, compare with the widths of finite-sample CIs based on \emph{nonasymptotic} e-variables. Figure~\ref{fig:asymp_vs_nonasymp} compares the width of the two \aphcis based on the \iwr e-variable to several well-known finite-sample CIs for bounded and sub-Gaussian observations. For bounded observations drawn iid from Bernoulli(0.25), we compare to the classical Bernstein CI in addition to the recent bound of \citet{waudby2024estimating} (betting CI). 
The latter is the tightest empirical Bernstein bound (i.e., it does not require knowledge of the variance) in the literature in addition to being nearly theoretically optimal~\citep{shekhar2023near}. 

The width of $\calH_n^\iwr$ is on par or better than the oracle Bernstein CI but looser than the betting CI. It's worth emphasizing that both the betting CI and the oracle Bernstein bound are taking advantage of the fact that the observations are bounded, whereas the two \aphcis are not, being valid for all distributions with a finite second moment. 

We perform a similar experiment for sub-Gaussian data, comparing the two \aphcis to the sub-Gaussian CI based on the Chernoff method, i.e., $(\Xbar_n \pm \sqrt{2\log(2/\alpha)/(n\sigma^2)})$. Again,  $\calH_n^\iwr$ is roughly on par with the finite-sample CI, while $\calH_n^{\mix,\iwr}$ is looser than both. Our main interest in Figure~\ref{fig:asymp_vs_nonasymp} is to investigate the widths of asymptotic versus nonasymptotic CIs, as opposed to the post-hoc nature of the asymptotic CIs. Therefore, we instantiated $\calH_n^\iwr$ with ex ante anchoring and anchor $\alpha = \alpha_0$. 

Our third experiment studies the risk of our \aphcis compared to the Wald CI by mimicking an analyst who selects $\alpha$ in a data-dependent manner. We generate $n=1000$ samples from a Gaussian distribution with mean zero and variance four, and compare the Wald CI $\calH_n^\wald$ against the \aphci $\calH_n^{\iwr}$ with $\lambda=\sqrt{2\log(2/0.01)}$, and the two mixture \aphcis $\calH_n^{\mix,\iwr}$ with $R=20$ and $\kappa=1$, and $\calH_n^{\mix,\rws}$ with $\rho=2$. To verify the risk guarantee, we simulate a ``p-hacking'' procedure where the analyst searches for the smallest significance level $\alpha^\star$ that permits the rejection of the null hypothesis $H_0: \theta = 0$. Specifically, for each trial, we identify: $\alpha^\star = \inf \{ \alpha>0 : 0 \notin \calH_n(\alpha) \}$,
using a binary search over the interval $[\alpha_{\min}, \alpha_{\max}]$ with tolerance $10^{-5}$ (here $\alpha_{\min}=10^{-5}$ and $\alpha_{\max}=500$). Note that this procedure is operationally equivalent to estimating the supremum risk $\sup_{\alpha} \ind{0 \notin \calH_n(\alpha)}/\alpha$. We evaluate the risk under two conditions:
\begin{enumerate}
    \item \textbf{Practical regime ($0<\alpha\le1)$:} The search is restricted to the interpretable limit of statistical inference. If the CI contains zero for all $\alpha\le1$, the risk contribution is simply zero.
    \item \textbf{Theoretical regime ($0<\alpha$):} Experimentally, we extend the search interval to $\alpha_{\max}=500$. This upper bound was selected empirically because it was sufficiently large to ensure that the null hypothesis was rejected in all 1000 simulation trials. While $\alpha>1$ lacks a standard statistical interpretation, forcing the method to reject in every trial allows us to benchmark the tightness of the bound.
\end{enumerate}

\begin{table}[t]
    \centering
    \begin{tabular}{l|c|c}
        \textbf{Method} & \textbf{Practical Risk} ($0<\alpha \le 1$) & \textbf{Theoretical Risk} ($0<\alpha$) \\
        \hline 
         $\risk_n(\calH_n^\wald)$ & \multicolumn{2}{c}{\color{red} 7.089} \\
         \cline{2-3}
         $\risk_n(\calH_n^{\iwr})$ & 0.532 & 0.620 \\
         $\risk_n(\calH_n^{\mix,\iwr})$ & 0.367 & 0.976 \\
         $\risk_n(\calH_n^{\mix,\rws})$ & 0.001 & 0.028
    \end{tabular}
    \caption{Empirical risk under data-dependent selection of $\alpha$. The ``Practical Risk'' column reports the risk when searching for significance within the range $0 <\alpha \le 1$. The ``Theoretical Risk'' column extends the search to $\alpha_{\max}$ to force rejection. For the Wald CI, the risk is identical in both regimes because $\alpha^\star$ is almost surely $\le 1$.}
    \label{tab:posthoc_risk}
\end{table}

Table~\ref{tab:posthoc_risk} presents the results. As expected, the Wald CI yields risk levels well beyond one, while our \aphcis maintain valid risk control. The risk of $(\calH_n^{\mix,\rws})$ is well below one in both regimes, reflecting the fact that it is more conservative than the other \aphcis. As alluded to in Section~\ref{sec:rws-evariable}, this is due to the fact that it is providing a stronger guarantee than either $(\calH_n^\iwr)$ or $(\calH_n^{\mix,\iwr})$---see Section~\ref{sec:asymp-eprocesses}. 

Note that in the practical regime, the risk is strictly below one for all \aphcis. This conservatism is the necessary price of post-hoc validity: the theoretical bound must account for the possibility of rejection at any $\alpha > 0$, including values $\alpha > 1$ which are statistically meaningless but mathematically possible.

\section{Post-hoc Sequential Inference and Asymptotic e-processes}
\label{sec:asymp-eprocesses}

The discussion thus far has focused on post-hoc analogues of asymptotic confidence intervals, hypothesis tests, and p-values. There is a growing literature on \emph{sequential} (but not post-hoc) analogues of statistical inference procedures, taking the form of confidence sequences, sequential hypothesis tests, and anytime p-values, respectively. The roots of these sequential procedures lie in the work of \citet{wald1945sequential,wald1947sequential}, Robbins and colleagues \citep{robbins1970statistical,darling1967confidence,lai1976confidence} and recent years have seen increased activity; see Chapter 7 of \citet{ramdas2024hypothesis}, the review paper by \citet{ramdas2023game}, and other related papers \citep{howard2021time,waudby2024estimating,orabona2023tight}. Furthermore, distribution-uniform asymptotic versions of these procedures have been developed by \citet{waudby2023distribution} being enabled by recent advances in distribution-uniform strong asymptotics \citep{waudby2024distribution,waudby2025nonasymptotic} and distribution-pointwise anytime-valid inference \citep{robbins1970boundary,waudby2024time,bibaut2022near}. 

However, none of these were shown to enjoy post-hoc validity in the sense of the present paper, and indeed, it is not clear \emph{a priori} what should be meant by ``post-hoc sequential inference.'' In this section, we provide definitions for post-hoc asymptotic sequential inference and derive explicit procedures satisfying those definitions. 

\subsection{Definitions and desiderata}

In the definitions to follow, we will frequently rely on the notion of an ``upper triangular array'', an object that frequently appears in time-uniform asymptotics \citep{waudby2023distribution}. An upper triangular array of random variables is a doubly indexed sequence $\infseqm{\infseqkm{X_k^\brackm}}$, so called because it can be visually depicted as 
\[
\begin{tikzpicture}[baseline=(arr.center)]
  \node (arr) {$
  \begin{alignedat}{3}
    & X_{1}^{(1)},\ & X_{2}^{(1)},\ &X_{3}^{(1)}, &\dots \\
    & & X_{2}^{(2)},\ &X_{3}^{(2)}, &\dots \\
    && &X_{3}^{(3)},\ &\dots \\
    &&&& \ddots
  \end{alignedat}
  $};

  \draw[-{Latex[length=2mm]}]
    ($(arr.north east)+(1.2em,0.2em)$) --
    ($(arr.south east)+(1.2em,-0.2em)$)
    node[pos=0.12,right=10pt,rotate=-90] {asymptotics};

  \draw[-{Latex[length=2mm]}]
    ($(arr.south west)+(0.3em,-1.2em)$) --
    ($(arr.south east)+(0.8em,-1.2em)$)
    node[midway,below=3pt] {time-uniformity};
\end{tikzpicture}
\]
where ``asymptotics'' are with respect to $m$ tending to infinity and ``time-uniformity'' is with respect to those indices $k \geq m$. With this language in mind, consider the following definition.

\begin{definition}[Asymptotic post-hoc confidence sequences and anytime p-values]\label{definition:asymptotic post-hoc confseq and p-val}
  Let $\calP$ be a collection of distributions and $f : \NN \to (0, 1)$ a nonincreasing function. We say that the collection of upper triangular arrays $\infseqm{\infseqkm{\calH_k^\brackm(\alpha)}}$, with $\alpha \in (0, 1)$, is a distribution-uniform  \emph{asymptotic post-hoc confidence sequence} (\aphcs) for the parameters $(\theta(P))_\Pin$ and that it has a range growth function $f$ if for any stopping time $\tau$, it holds that
  \begin{equation}
  \label{eq:post-hoc-confseq}
    \limsup_{m \to \infty}\sup_{P \in \calP}\E_P \left [ \sup_{\alpha > f(m)}\frac{\ind{ \theta(P) \notin \calH_{\tau \lor m}^\brackm(\alpha)}}{\alpha} \right ] \leq 1.
  \end{equation}
  Similarly, for a subset of distributions $\calP_0 \subseteq \calP$, we say that the upper triangular array $\infseqm{\infseqkm{p_k^\brackm}}$ is a $\calP_0$-uniform \emph{asymptotic post-hoc anytime p-value} with range growth function $f$ if for any stopping time $\tau$,
  \begin{equation}
  \label{eq:post-hoc-anytime-pval}
      \limsup_{m \to \infty} \sup_{P \in \calP_0} \E_P \left [ \sup_{\alpha > f(m)} \frac{\1 \{ p_{\tau \lor m}^\brackm \leq \alpha \}}{\alpha} \right ] \leq 1.
  \end{equation}
\end{definition}

Let us briefly parse Definition~\ref{definition:asymptotic post-hoc confseq and p-val}, turning our attention to the display \eqref{eq:post-hoc-confseq}. First suppose that the guarantee in~\eqref{eq:post-hoc-confseq} holds for a fixed $m$, not only in the limit. In that case, the set $\calH^\brackm_\tau$ is a post-hoc confidence interval at all stopping times $\tau$ taking values at least $m$ but only for those $\alpha$ such that $\alpha > f(m)$. That is, we imagine an analyst committing to first gathering $m$ observations and performing an analysis. On the basis of those results, they may continue sampling and the results will remain valid (i.e., the risk will remain bounded by one as long as $\alpha > f(m)$). In other words, $\calH^\brackm_\tau$ satisfies optional continuation after an initial burn-in period $m$.
The asymptotic guarantee of an \aphcs is simply one that holds as the burn-in period $m$ tends to $\infty$.

Given the definitions of post-hoc asymptotic sequential inference procedures above, it is natural to ask: \emph{What is the sequential analogue of an asymptotic e-value?} Similar to how e-processes \citep[\S 7]{ramdas2024hypothesis} are the sequential analogue of e-values, the following definition of an asymptotic e-process serves as a sequential analogue of an asymptotic e-value. While asymptotic e-values and non-asymptotic e-processes have appeared and been studied in the literature, the following definition appears to be new.
\begin{definition}[Asymptotic e-processes]\label{definition:asymptotic e-proc}
  Let $\calP$ be a collection of distributions. We say that an upper triangular array $\infseqm{\infseqkm{E_k^\brackm}}$ is a distribution-uniform \emph{asymptotic $\calP$-e-process} if for any stopping time $\tau$, it holds that
  \begin{equation}
    \limsup_{m \to \infty}\sup_{P \in \calP}\E_P \left [ E_{\tau \lor m}^\brackm \right ] \leq 1.
  \end{equation}
\end{definition}

Similar to how asymptotic e-values are necessary and sufficient for the construction of asymptotic post-hoc confidence intervals and p-values, asymptotic e-processes are necessary and sufficient for the construction of the procedures described in Definition~\ref{definition:asymptotic post-hoc confseq and p-val}. The following proposition makes the previous statement formal and serves as a sequential analogue of Proposition~\ref{prop:cs-via-evalue}.
\begin{proposition}[On the necessity and sufficiency of asymptotic e-processes]
\label{prop:aphcs-via-eproc}
Suppose that $\calH_{k}^\brackm(\cdot)$ forms an $\aphcs$ for $\theta^\star$. Then there exists a family of functions $(E_{k}^\brackm(\theta))_{\theta\in \Re}$ so that for any $\alpha \in (0, 1)$, $\calH_k^\brackm(\alpha)$ can be written as
\begin{equation}\label{eq:cs-from-eproc}
  \calH_{k}^\brackm(\alpha) = \left\{ \theta \in \Re : E_{k}^\brackm(\theta) < 1/\alpha \right\},  
\end{equation}
and $E_{k}^\brackm(\theta^\star)$ forms a post-hoc asymptotic $\calP$-e-process. Conversely, if $E_{k}^\brackm(\theta^\star)$ forms an asymptotic $\calP$-e-process, then the set defined by \eqref{eq:cs-from-eproc} for any $\alpha \in (0, 1)$ forms an \aphcs.
\end{proposition}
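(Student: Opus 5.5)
The plan is to mirror the proof of Proposition~\ref{prop:cs-via-evalue}, with two changes: the single index $n$ becomes the pair $(m,\tau\lor m)$, and the range of $\alpha$ is restricted to $\alpha>f(m)$. Throughout I assume the same monotonicity and right-continuity of $\alpha\mapsto\calH_k^\brackm(\alpha)$ as in Proposition~\ref{prop:cs-via-evalue}. That is, for every $\theta$ the rejection set $\{\alpha: \theta\notin\calH_k^\brackm(\alpha)\}$ is an interval of the form $[\alpha_0,\cdot)$, closed at its left endpoint. Without this, no representation of the form \eqref{eq:cs-from-eproc} can hold.

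\emph{Sufficiency (converse direction).} Suppose $E_k^\brackm(\theta^\star)$ is an asymptotic $\calP$-e-process and define $\calH_k^\brackm(\alpha)$ by \eqref{eq:cs-from-eproc}. The first step is a pathwise identity. Fix $\omega$, write $E:=E_{\tau\lor m}^\brackm(\theta^\star)(\omega)$, and note that $\theta^\star\notin\calH(\alpha)$ if and only if $\alpha\ge 1/E$. Then
\[
\sup_{\alpha>f(m)}\frac{\ind{\theta^\star\notin\calH_{\tau\lor m}^\brackm(\alpha)}}{\alpha}=\sup_{\alpha>f(m),\,\alpha\ge 1/E}\frac1\alpha = E\wedge \frac{1}{f(m)}\le E .
\]
The case $1/E\le f(m)$ yields $1/f(m)$ as a supremum that is not attained. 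The case $E=0$ gives $0$. If $\alpha$ is restricted to $(0,1)$, the supremum is only smaller. Taking $\E_P$, then $\sup_{P\in\calP}$, then $\limsup_m$, the e-process property gives \eqref{eq:post-hoc-confseq} for every stopping time $\tau$. This direction is routine.

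\emph{Necessity.} Given an \aphcs, for each $k\ge m$ and each $\theta$ set $\alpha_0(\theta):=\inf\{\alpha\in(f(m),1):\theta\notin\calH_k^\brackm(\alpha)\}$, with $\inf\emptyset:=\infty$. Then define $E_k^\brackm(\theta):=1/\alpha_0(\theta)$, with the convention $1/\infty=0$. By monotonicity and left-closedness of the rejection interval, for $\alpha\in(f(m),1)$ we have $\theta\notin\calH_k^\brackm(\alpha)$ if and only if $\alpha\ge\alpha_0(\theta)$, if and only if $E_k^\brackm(\theta)\ge1/\alpha$. This gives \eqref{eq:cs-from-eproc} on that range. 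Moreover, pathwise,
\[
E_{\tau\lor m}^\brackm(\theta^\star)=\sup_{\alpha>f(m)}\frac{\ind{\theta^\star\notin\calH_{\tau\lor m}^\brackm(\alpha)}}{\alpha}.
\]
Integrating, taking $\sup_P$ and then $\limsup_m$, and using \eqref{eq:post-hoc-confseq} shows that $(E_k^\brackm(\theta^\star))$ is an asymptotic $\calP$-e-process. Measurability of $E_{\tau\lor m}^\brackm$ will be handled by restricting the infimum to rational $\alpha$; right-continuity justifies this restriction.

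\emph{Main obstacle.} The hard part is the range $\alpha\in(0,f(m)]$. The risk in \eqref{eq:post-hoc-confseq} only controls $E\wedge 1/f(m)$. The $E$ constructed above is automatically at most $1/f(m)$, so its e-process bound is fine. However, the representation \eqref{eq:cs-from-eproc} for $\alpha\le f(m)$ then forces $\calH_k^\brackm(\alpha)$ to be nonrejecting, i.e.\ to contain $\theta$ whenever $\theta$ was not rejected at level $f(m)^+$. The sets the analyst supplied at such small $\alpha$ are unconstrained by the definition, so this need not hold. I would first try to resolve this by noting that Definition~\ref{definition:asymptotic post-hoc confseq and p-val} places no requirement on $\calH_k^\brackm(\alpha)$ for $\alpha\le f(m)$. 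One can therefore, without loss of generality, redefine those sets to be $\calH_k^\brackm(f(m)^+)$ and obtain an equivalent \aphcs for which \eqref{eq:cs-from-eproc} holds for all $\alpha\in(0,1)$. If exact equality for the original sets is demanded, the alternative is to drop the truncation and set $E:=1/\alpha_0$ with the infimum taken over all of $(0,1)$. In that case the e-process bound would require controlling $\E_P[E\,\ind{E>1/f(m)}]$, which the definition does not provide, so the statement must then be read with the convention above.
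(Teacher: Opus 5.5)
Your argument has the same skeleton as the paper's proof. You define $E_k^\brackm(\theta)$ as a supremum of $\ind{\theta\notin\calH_k^\brackm(\alpha)}/\alpha$, use the pathwise identity for sufficiency, and reuse the case analysis of Proposition~\ref{prop:cs-via-evalue} for the representation. Your sufficiency direction is exactly what the paper's appeal to that proposition amounts to.

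The one substantive difference is where the supremum is taken. The paper takes it over all $\alpha>0$ and simply asserts that $E_{\tau\lor m}^\brackm(\theta^\star)$ is an asymptotic e-process ``by the property of forming an \aphcs''. You take it only over $\alpha\in(f(m),1)$. Your diagnosis is correct: \eqref{eq:post-hoc-confseq} controls only $\sup_{\alpha>f(m)}$, i.e.\ essentially $E\wedge 1/f(m)$, and says nothing about the sets at levels $\alpha\le f(m)$. So the paper's step does not follow.

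The paper's own Theorem~\ref{theorem:post-hoc mixture} shows this is not a technicality. Its sets are uncapped, so the paper's construction returns the uncapped Gaussian mixture. Proposition~\ref{prop:asymptotic e-process}, however, only controls the mixture capped at $T_m$. The uncapped mixture's exponent is a squared $t$-type statistic regularized only by $\log^{-1}m$, so it has infinite mean already for Gaussian data once $m$ is large.

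The literal statement is in fact false in general. Modify the sets so that $\theta$ is also rejected at all levels in $[\epsilon_m,f(m)]$ whenever it is rejected throughout $(f(m),1)$, for a deterministic $\epsilon_m<f(m)$. This preserves monotonicity and left-closedness and leaves \eqref{eq:post-hoc-confseq} untouched. Yet any representing $E$ must then satisfy $\E_P[E^\brackm_{\tau\lor m}(\theta^\star)]\ge \epsilon_m^{-1}P(\theta^\star\text{ rejected on }(f(m),1))$, which diverges if $\epsilon_m\to 0$ fast enough. So your route (a valid e-process, with the representation on $(f(m),1)$) is the right one. Your explicit monotonicity and left-closedness hypothesis, which the paper uses only implicitly, is also needed.

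There is one slip in your proposed normalization. Your capped $E$ satisfies $E\le 1/f(m)$, so the sets $\{\theta: E_k^\brackm(\theta)<1/\alpha\}$ are all of $\Re$ for every $\alpha<f(m)$. They equal $\bigcup_{\alpha'>f(m)}\calH_k^\brackm(\alpha')$ only at $\alpha=f(m)$.

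Setting $\calH_k^\brackm(\alpha):=\calH_k^\brackm(f(m)^+)$ on all of $(0,f(m)]$, as you suggest, is therefore not what your $E$ represents, and it breaks the argument. Any $\theta$ rejected throughout $(f(m),1)$ would then be rejected on all of $(0,1)$. Its rejection set would no longer have the form $[\alpha_0,\cdot)$ with $\alpha_0>0$, and any representing $E$ would have to equal $+\infty$ there. For $\theta^\star$ this event typically has positive probability, which destroys the e-process bound.

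The correct normalization is $\calH_k^\brackm(\alpha):=\Re$ for $\alpha<f(m)$, together with the union above at $\alpha=f(m)$. This family is monotone, has left-closed rejection sets, and satisfies \eqref{eq:post-hoc-confseq} whenever the original does. With it, your argument yields both the representation for every $\alpha\in(0,1)$ and the asymptotic e-process property.
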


\begin{proof}
    Let $\calH_{k}^\brackm(\cdot)$ be an \aphcs for $\theta^\star$. For any $\theta \in \Re$ and any $k, m \in \NN$ such that $k \geq m$, define
    \begin{equation}
        E_{k}^\brackm(\theta) := \sup \left \{ \frac{1}{\alpha} : \alpha > 0 \text{ and } \theta \notin \calH_k^\brackm(\alpha) \right \} = \sup_{\alpha > 0} \frac{\ind{\theta \notin \calH_k^\brackm(\alpha) }}{\alpha}.
    \end{equation}
    By the property of $\calH_k^\brackm(\alpha)$ forming an \aphcs for $\theta^\star$, it holds that $E_k^\brackm(\theta^\star)$ is an asymptotic e-process. The remainder of the proof proceeds identically to the case-by-case analysis found in the proof of Proposition~\ref{prop:cs-via-evalue}.
\end{proof}

Note that all of the definitions stated in this section are in the distribution-uniform setting. One can of course give pointwise analogues if desired by taking the collection of distributions $\calP$ to be a singleton, but we will continue to work in the uniform setting as the \aphcs we present in the next section naturally satisfies this criterion. 

\subsection{An asymptotic post-hoc mixture confidence sequence}

Here, we observe that \rws asymptotic e-variable of Section~\ref{sec:rws-evariable} can be used to define an asymptotic e-process. The proof can be found in Appendix~\ref{proof:asymptotic e-process}.

\begin{proposition}
\label{prop:asymptotic e-process}
  Fix $\delta > 0$ and suppose that that $(X_n)$ are iid with a $\calP$-uniformly bounded $(2+\delta)$th moment for some $\calP\subseteq\calM_{2+\delta}(\theta)$. 
  For $\gamma\in(0,1/2)$, suppose $T_n = o(n^{\delta\gamma/2} / \log^{2+\delta/2}n)$. 
  Define the upper triangular array of processes $\infseqkm{E_k^\brackm(\theta)}$ by
  \begin{equation}
    E_k^\brackm(\theta) := \int_{\lambda \in \Re}\exp \left \{ \lambda S_k(\theta)  - k(\shat_k^2 + \log^{-1}(m)) \frac{\lambda^2}{2} \right \} \d F(\lambda) \land T_m,
  \end{equation}
  where $F$ is any probability measure supported on a subset of $\Re$.
  Then $\infseqm{\infseqkm{E_k^\brackm(\theta)}}$ forms a $\calP$-uniform asymptotic e-process.
\end{proposition}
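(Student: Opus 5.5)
We will follow the event-partitioning template of Observation~\ref{obs:truncation}, now applied uniformly over all times $k\ge m$. Write $G_k^\brackm$ for the untruncated mixture, so that $E_k^\brackm = G_k^\brackm\wedge T_m$. The idea is to compare $G_k^\brackm$ with the Howard et al.\ supermartingale mixture
\[
G_k^\star := \int \exp\Big\{\lambda S_k(\theta) - \tfrac{\lambda^2}{2}\,k\sigma_P^2\Big\}\,\d F(\lambda).
\]
This is a nonnegative martingale with mean one under each $P\in\calP$, since the increments are iid with mean $\theta$ and variance $\sigma_P^2$. Fubini lets us pass from a single $\lambda$ to the $F$-mixture. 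We could alternatively use the $U_k^\otimes$ version in \eqref{eq:howard-evar}; this also works, but the exact variance is cleanest.

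Define the good event
\[
A_m := \big\{ \sup_{k\ge m} |\shat_k^2 - \sigma_P^2| \le \log^{-1}(m)\big\}.
\]
On $A_m$, for every $k\ge m$ we have $k(\shat_k^2+\log^{-1}m)\ge k\sigma_P^2$. Hence $G_k^\brackm\le G_k^\star$ pointwise in $\lambda$, so after integrating, $G_k^\brackm\le G_k^\star$ on $A_m$ for all $k \ge m$ simultaneously. This simultaneity is why we use a supremum over $k\ge m$ in $A_m$ rather than a fixed-$n$ event.

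For any stopping time $\tau$, we then decompose
\[
\E_P[E_{\tau\lor m}^\brackm] \le \E_P[G^\star_{\tau\lor m}\ind{A_m}] + T_m\,P(A_m^c).
\]
The first term is handled by optional stopping. Since $\tau\lor m$ may be unbounded, we apply Fatou to $\tau\lor m\wedge N$. The nonnegative martingale $G^\star$ gives $\E_P[G^\star_{\tau\lor m\wedge N}]= 1$, so $\E_P[G^\star_{\tau\lor m}\ind{\tau<\infty}]\le 1$. On $\{\tau=\infty\}$ we interpret $E_\infty$ as a $\liminf$, or simply restrict to $\tau<\infty$ a.s. Either way the first term is at most $1$ uniformly in $P$ and $m$.

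The second term is the main obstacle. It requires a $\calP$-uniform, time-uniform, nonasymptotic SLLN for $\shat_k^2$ under a uniformly bounded $2+\delta$ moment: a bound on
\[
\sup_P P\big(\sup_{k\ge m}|\shat_k^2-\sigma_P^2|>\eps\big)
\]
that decays polynomially in $m$. We will invoke the result of \citet{ruf2025concentration} used in the proof of Theorem~\ref{thm:rws-evariable}. Applied to the variables $Y_i=(X_i-\theta)^2$, which have a uniformly bounded $1+\delta/2$ moment, it yields a bound of the form
\[
\sup_P P\Big(\sup_{k\ge m}\Big|\tfrac1k\textstyle\sum_{i\le k} Y_i-\sigma_P^2\Big|>\eps\Big)\lesssim m^{-\delta\gamma/2}\,\eps^{-(2+\delta/2)}.
\]
We also control the correction $(\Xbar_k-\theta)^2$ with the analogous first-moment bound, at threshold $\eps/2$, which is of smaller order. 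Setting $\eps=\log^{-1}m$ gives
\[
T_m\sup_P P(A_m^c)\lesssim T_m\,\log^{2+\delta/2}(m)\,m^{-\delta\gamma/2}\to0
\]
by the assumed growth condition on $T_m$. The exact exponents should be matched to those used in Theorem~\ref{thm:rws-evariable}: its condition $T_n=o(n^{\delta\gamma/2}\eps_n^{2+\delta/2})$ with $\eps_n=\log^{-1}n$ is exactly the present hypothesis, which strongly suggests that the same maximal inequality, already uniform over $k\ge m$, suffices. Combining the two terms and taking $\limsup_m\sup_P$ completes the proof.
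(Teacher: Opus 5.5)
There is a genuine gap in the step that bounds the first term of your decomposition. Your comparison process $G_k^\star=\int\exp\{\lambda S_k(\theta)-\tfrac{\lambda^2}{2}k\sigma_P^2\}\,\d F(\lambda)$ is \emph{not} a mean-one martingale, and not even a supermartingale, just because the increments are iid with mean $\theta$ and variance $\sigma_P^2$. For fixed $\lambda$, the process $\exp\{\lambda S_k(\theta)-k\lambda^2\sigma_P^2/2\}$ is a supermartingale iff $\E_P e^{\lambda(X_1-\theta)}\le e^{\lambda^2\sigma_P^2/2}$, which is a sub-Gaussian condition.

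The hypotheses give only a $(2+\delta)$th moment. Under them, $\E_P e^{\lambda(X_1-\theta)}$ can be $+\infty$ for every $\lambda\neq 0$, and then $\E_P[G_k^\star]=\infty$ for any $F$ not concentrated at $0$. The inequality can fail even with light tails. For centered $\mathrm{Exp}(1)$ data, $\E e^{\lambda(X_1-1)}=e^{-\lambda}/(1-\lambda)=\exp\{\sum_{j\ge 2}\lambda^j/j\}>e^{\lambda^2/2}$ for all $\lambda\in(0,1)$, so the process is a strict submartingale. Hence the bound $\E_P[G^\star_{\tau\lor m}\ind{A_m}]\le 1$ is unjustified.

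The $U_k^\otimes$ version you mention in passing is not an interchangeable variant; it is the fix, and it is what the paper uses. Take $U_k^\otimes=\tfrac13\sum_{i\le k}((X_i-\theta)^2+2\sigma_P^2)$. Its random quadratic term is exactly what makes $\exp\{\lambda S_k(\theta)-\tfrac{\lambda^2}{2}U_k^\otimes\}$ a nonnegative supermartingale with initial value one for any iid sequence with mean $\theta$ and finite variance. This rests on the pointwise inequality $e^{x-x^2/6}\le 1+x+x^2/3$, and no exponential moments are needed. Mixing over $F$ preserves the supermartingale property by Tonelli.

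Correspondingly, the good event must be $A_m=\{\sup_{k\ge m}|\shat_k^2-U_k^\otimes/k|\le\log^{-1}m\}$. This event guarantees $k(\shat_k^2+\log^{-1}m)\ge U_k^\otimes$ for all $k\ge m$, whereas closeness of $\shat_k^2$ to $\sigma_P^2$ does not. Moreover, $\shat_k^2-U_k^\otimes/k=\tfrac23\big(\tfrac1k\sum_{i\le k}(X_i-\theta)^2-\sigma_P^2\big)-(\Xbar_k-\theta)^2$. So $P(A_m^c)$ is controlled by exactly the two time-uniform applications of Lemma~\ref{lemma:nonasymp-slln} you already set up, and $T_m\sup_{P\in\calP}P(A_m^c)\to 0$ as you computed.

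Your optional-stopping argument via Fatou then goes through unchanged for this supermartingale. With these two substitutions, your time-uniform event-partitioning argument coincides with the paper's proof.
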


We may now combine Propositions~\ref{prop:aphcs-via-eproc} and \ref{prop:asymptotic e-process} to provide the following \aphcs. Just as in Theorem~\ref{thm:rws-aphci}, we take $F$ to be a centered Gaussian and $\gamma = 0.49$. The mixture is then computed in precisely the same way as in the proof of Corollary~\ref{cor:rws-normal-mixture}. 

\begin{theorem}[Post-hoc mixture confidence sequences and anytime p-values]\label{theorem:post-hoc mixture}
    Let $(X_n)$ be a sequence of iid random variables with  a $\calP$-uniformly bounded $(2+\delta)$-th  moment for some $\delta>0$ and $\calP\subseteq\calM_{2+\delta}(\mu)$. 
    Fix $C,\rho>0$ and let $\hat{u}_k^\brackm = k(\shat_k^2 + \log^{-1}m)$. 
    Then 
    \begin{equation}\label{eq:post-hoc-robbins mixture}
    \calH_k^\brackm(\alpha) := \left(\Xbar_k \pm \sqrt{\frac{2\rho\hat{u}_k^\brackm + 2}{k^2\rho}\log\left(\frac{\sqrt{\rho \hat{u}_k^\brackm + 1} }{\alpha}\right)}\right),
    \end{equation}
    forms a $\calP$-uniform \aphcs for $\mu$ with range growth function $f(m) = C\cdot m^{-0.24}$.
\end{theorem}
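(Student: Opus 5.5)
Theorem~\ref{theorem:post-hoc mixture} follows by combining Proposition~\ref{prop:asymptotic e-process} with the converse direction of Proposition~\ref{prop:aphcs-via-eproc}, with one adjustment: the range growth function must absorb the truncation level $T_m$. Concretely, I take $F=N(0,\rho)$ and $\gamma=0.49$, and choose the truncation $T_m=C^{-1}\cdot m^{0.24\delta'}$-type sequence. More precisely, I choose $T_m := 1/f(m) = m^{0.24}/C$ (after reducing $\delta$ if needed so that the rate condition holds; see below). Proposition~\ref{prop:asymptotic e-process} needs $T_m = o(m^{\delta\gamma/2}/\log^{2+\delta/2} m)$. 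With $\gamma=0.49$ we have $\delta\gamma/2 = 0.245\delta$, so the choice $T_m\asymp m^{0.24}$ works whenever $\delta\ge 1$; the polynomial gap $m^{0.005}$ beats the log factor. For $\delta<1$ I would either state the range growth as $C m^{-0.24\delta}$ or note that the exponent is matched as in Corollary~\ref{cor:rws-normal-mixture}. I would flag this exponent bookkeeping but not dwell on it. The result is that $E_k^{(m)}(\mu)$, the Gaussian mixture truncated at $T_m$, is a $\calP$-uniform asymptotic e-process.

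Next I compute the mixture exactly as in Corollary~\ref{cor:rws-normal-mixture}. With $\hat u=\hat u_k^{(m)}$, completing the square gives
\[\int \exp\{\lambda S_k - \hat u\lambda^2/2\}\,\d N(0,\rho)(\lambda) = (\rho\hat u+1)^{-1/2}\exp\{\rho S_k^2/(2\rho\hat u+2)\}.\]
Let $G_k^{(m)}(\theta)$ denote this untruncated mixture and $E_k^{(m)}=G_k^{(m)}\wedge T_m$. Solving $G_k^{(m)}(\theta)<1/\alpha$ for $\theta$, using $S_k(\theta)=k(\Xbar_k-\theta)$, yields exactly the interval in \eqref{eq:post-hoc-robbins mixture}.

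It remains to handle the truncation. For $\alpha > f(m)=1/T_m$ we have $1/\alpha<T_m$, so $E_k^{(m)}(\theta)<1/\alpha$ if and only if $G_k^{(m)}(\theta)<1/\alpha$. Hence $\calH_k^{(m)}(\alpha)=\{\theta: E_k^{(m)}(\theta)<1/\alpha\}$ for all $\alpha>f(m)$. Then, for any stopping time $\tau$,
\[\sup_{\alpha>f(m)}\frac{\ind{\mu\notin\calH^{(m)}_{\tau\vee m}(\alpha)}}{\alpha}\le E^{(m)}_{\tau\vee m}(\mu),\]
because if $\mu$ is excluded at level $\alpha$ then $1/\alpha\le E$. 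Taking $\limsup_m\sup_P\E_P$ and applying Proposition~\ref{prop:asymptotic e-process} gives \eqref{eq:post-hoc-confseq}. This direct pointwise inequality replaces the appeal to Proposition~\ref{prop:aphcs-via-eproc}, which avoids the range restriction subtlety there.

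The main obstacle is the exponent matching between $f(m)$ and the admissible growth of $T_m$. Everything else is a deterministic inversion. The burden of the stochastic argument lies entirely inside Proposition~\ref{prop:asymptotic e-process}, which is assumed.
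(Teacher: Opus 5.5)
Your proposal is correct and is essentially the paper's argument. The paper combines Proposition~\ref{prop:asymptotic e-process} (with $F$ a centered Gaussian and $\gamma=0.49$) with the converse of Proposition~\ref{prop:aphcs-via-eproc} and the mixture computation of Corollary~\ref{cor:rws-normal-mixture}. Your direct bound $\sup_{\alpha>f(m)}\ind{\mu\notin\calH^{(m)}_{\tau\vee m}(\alpha)}/\alpha\le E^{(m)}_{\tau\vee m}(\mu)$ relies on $\{G<1/\alpha\}=\{G\wedge T_m<1/\alpha\}$ whenever $1/\alpha<T_m$, and it usefully makes explicit how the range growth function absorbs the truncation, a point the paper leaves implicit.

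Your exponent flag is also justified. The paper's own proof of Proposition~\ref{prop:asymptotic e-process} takes $T_m=m^{0.24\delta}$, so the stated $f(m)=Cm^{-0.24}$ only follows for $\delta>0.24/0.245$; otherwise it should be of order $m^{-0.24\delta}$. Your parenthetical about ``reducing $\delta$'' is backwards, though: a smaller $\delta$ only tightens the rate condition.
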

Observe that \eqref{eq:post-hoc-robbins mixture} takes essentially the same form as some asymptotic confidence sequences of \citet{waudby2024time}. An alternative interpretation of Theorem~\ref{theorem:post-hoc mixture} is that after a minor modification to the variance estimate, the aforementioned asymptotic confidence sequences also enjoy a $\calP$-uniform post-hoc guarantee. From this perspective, while some of the methods introduced in this paper can be viewed as bespoke constructions for the sake of satisfying post-hoc risk control, Theorem~\ref{theorem:post-hoc mixture} serves to ``upgrade'' the guarantees of an existing result in the literature.

\section{Conclusions}
\label{sec:summary}

We have extended the paradigm of asymptotic inference to the post-hoc setting, thus allowing the significance level to be chosen as a function of the data. There are both benefits and drawbacks to our approach. While significantly more flexible than traditional techniques, we obtain guarantees on risk control instead of error probabilities, the common currency of most inferential methods.

Our goal here has not been to argue against traditional inference, but instead to lay the foundations of an alternative statistical methodology that nevertheless retains rigorous frequentist guarantees. Instead of replacing standard p-values and confidence intervals, one could report them alongside their post-hoc variants studied here. 

While our main focus has been on providing concrete constructions of post-hoc confidence intervals, p-values, and hypothesis tests, our results relied on innovations in the theory of asymptotic e-variables. In particular, we provided new asymptotic e-variables, introduced general methods for constructing them, and extended the scope of existing ones to the distribution-uniform setting.    

We also introduced the notion of ``post-hoc asymptotic sequential inference,'' extending the notions of asymptotic e-variables and asymptotic post-hoc CIs and p-values to the sequential setting. In particular, we introduced asymptotic e-processes and post-hoc asymptotic confidence sequences and gave an example of each. Given the richness of post-hoc inference on one hand and sequential inference on the other, we anticipate that there are many interesting questions to be asked at their intersection.

For the practicing scientist interested in employing asymptotic post-hoc CIs, all of the \aphcis that we have discussed have their benefits. The two \aphcis based on the \iwr asymptotic e-variable have the best performance, with $(\calH_n^\iwr)$ instantiated with ex ante anchoring being the tightest \aphci for reasonable ranges of $\alpha$ and anchor $\alpha_0$, whereas $(\calH_n^{\mix,\iwr})$ is tighter in the worst case. The \aphci based on the \rws e-variable is slighter looser, but as we saw in Section~\ref{sec:asymp-eprocesses} it provides a stronger guarantee, and providing flexibility not only in terms of post-hoc $\alpha$ selection, but also in terms of sample size. Given the prevalence of optional continuation, we recommend the latter as the safest \aphci in practice.  

Overall, we hope to have fruitfully expanded the burgeoning area of post-hoc inference to the asymptotic setting, providing practitioners with more flexible inferential tools that allow for data-driven decision-making.

\subsection*{Acknowledgments}
The authors thank Anastasios Angelopoulos and Arun Kumar Kuchibhotla for helpful discussions.
BC and AR acknowledge support from NSF grants IIS-2229881 and DMS-2310718. EG acknowledges support from the Google PhD Fellowship. IW-S acknowledges support from the Miller Institute for Basic Research in Science.

This project was funded in part by the European Union (ERC-2022-SYG-OCEAN-101071601).
Views and opinions expressed are however those of the author(s) only and do not
necessarily reflect those of the European Union or the European Research Council
Executive Agency. Neither the European Union nor the granting authority can be
held responsible for them. 

This publication is part of the Chair ``Markets and Learning,'' supported by Air Liquide, BNP PARIBAS ASSET MANAGEMENT Europe, EDF, Orange and SNCF, sponsors of the Inria Foundation.

This work has also received support from the French government, managed by the National Research Agency, under the France 2030 program with the reference ``PR[AI]RIE-PSAI" (ANR-23-IACL-0008).

{\small 
\bibliography{main}
}

\appendix 

\section{On Alternative Definitions of Post-Hoc CIs}
\label{sec:alternative-defns}

Proposition~\ref{prop:cs-via-evalue} shows that post-hoc CIs which are constructed via thresholding a random variable $E$ are valid if and only if $E$ is an e-variable. One might ask if relaxing the definition of an e-variable results in a relaxed version of post-hoc CIs.

Consider the following definitions of \emph{approximate} e-values and p-values, given originally by \citet[Definition 3.1]{ignatiadis2024asymptotic}. We say that $Q$ is a $(\eps,\delta)$-approximate p-value for $\calP$ if 
\begin{equation}
    P(Q\leq \alpha) \leq (1 + \eps)\alpha + \delta, \quad \text{ for all } \alpha\in(0,1) \text{ and all }P\in\calP. 
\end{equation}
We say that $E$ is a $(\eps,\delta)$-approximate e-value for $\calP$ if 
\begin{equation}
    \E_P[E\wedge m] \leq 1 + \eps + \delta m, \quad \text{ for all }m>0\text{ and }P\in\calP. 
\end{equation}
Following \citet{ignatiadis2024asymptotic}, 
let us say that $(Q_n)$ is a \emph{weakly asymptotic p-value} for $\calP$ if for all $P\in\calP$, $Q_n$ is an $(\eps_n(P),\delta_n(P))$-approximate p-value for some $\eps_n(P) \to 0$, $\delta_n(P)\to0$. 
Define \emph{weakly asymptotic e-variables} analogously. Note that \citet{ignatiadis2024asymptotic} do not use the terminology ``weakly.'' 
We emphasize that these definitions of asymptotic p-variables and e-variables differ from those presented in the main text. Indeed, \citet[Proposition 3.6]{ignatiadis2024asymptotic} shows that weakly asymptotic e-variables are weaker (hence the name) notion than the asymptotic e-variables we use in the main text.

Based on approximate p-variables and e-variables, we might introduce the idea of an $(\eps,\delta)$-approximate post-hoc CI for the functional $\theta$ as a set $\calH(\alpha)$ such that 
\begin{equation}
\risk(\calH;\eps,\delta,P) \leq 1, \text{~ where ~}
    \risk(\calH;\eps,\delta,P) = \E_P\left[\sup_{\alpha>0} \frac{\ind{\theta(P) \notin \calH(\alpha)}}{(1+\eps)\alpha + \delta}\right]. 
\end{equation}
The motivation is the same as in Section~\ref{sec:post-hoc-CIs}: we begin with the approximate guarantee $P(\theta(P)\notin \calH(\alpha)) \leq (1 + \eps)\alpha + \delta$ and convert it to a post-hoc guarantee. We might then define a \emph{weakly asymptotic post-hoc CI} as a sequence of sets $(\calH_n)$ such that $\calH_n$ is an $(\eps_n(P),\delta_n(P))$-approximate post-hoc CI and $\eps_n(P),\delta_n(P)\xrightarrow{n\to\infty} 0$ for all $P\in\calP$.

There are several questions one can ask about whether weakly asymptotic/approximate e-variables result in post-hoc CIs. Unfortunately, we show below that the answer to most of these question is no. Before we begin, it useful to note that the risk of an $(\eps,\delta)$-approximate post-hoc CI can be written as 
\begin{align*}
    \E_P\left[\sup_{\alpha >0} \frac{\ind{E^\theta \geq 1/\alpha}}{(1 + \eps)\alpha + \delta}\right] &= \E_P\left[\sup_{\alpha \geq 1/E^\theta} \frac{1}{(1 + \eps)\alpha + \delta}\right] \\ 
    &= \E_P\left[\frac{1}{(1 + \eps)1/E^\theta + \delta}\right] = \E_P\left[\frac{E^\theta}{1 + \eps + \delta E^\theta}\right],
\end{align*}
when the post-CI is of the form $\calH(\alpha) = \{\theta: E^\theta < 1/\alpha\}$, just as in Proposition~\ref{prop:cs-via-evalue}. 

\paragraph{1. Do approximate e-variables result in approximate post-hoc CIs?}

It appears not. Consider the $(0,\delta)$-approximate e-variable $E$ defined by 
\[P(E=1) = 1-\delta,\quad P(E = u) = \delta,\]
for some large $u>0$. It's easy to verify that $E$ is $(0,\delta)$-approximate (hence $(\eps,\delta)$-approximate for any $\eps>0$): 
\begin{align*}
    \E_P[E\wedge m] &= (1\wedge m)(1 - \delta) + (u\wedge m)\delta \leq 1 - \delta + m\delta \leq 1 + \delta m. 
\end{align*}
However, $E$ results in a risk strictly greater than 1. Indeed, 
\begin{align*}
    \E_P\left[\frac{E}{1 + \eps +\delta E}\right] &= \frac{1-\delta}{1 + \eps + \delta} + \frac{u \delta}{1 + \eps + u\delta }\xrightarrow{u\to\infty} \frac{1-\delta}{1 + \eps + \delta} + 1 > 1,
\end{align*}
which holds as long as $\delta>0$. 

\paragraph{2. Do weakly asymptotic e-variables result in APH-CIs?}

Here by \aphcis we mean Definition~\ref{def:asymp-post-hoc-cs}, i.e., the usual notion investigated in the rest of the paper. As above, the answer appears to be no. 
Consider the sequence of random variables $(E_n)$ with $P(E_n=1) = 1-\delta_n$ and $P(E_n = u_n) = \delta_n$ where $\delta_n\to 0$ and $u_n\gg 1/\delta_n$. Then as above, $\E_P[E_n \wedge m] \leq 1 + \delta_n m$ for each $n$ so $(E_n)$ is a weakly asymptotic e-variable. But 
\begin{align*}
    \E_P\left[\sup_{\alpha>0} \frac{\ind{E_n \geq 1/\alpha}}{\alpha}\right] = \E_{P}[E_n] = 1-\delta_n + u_n\delta_n \to \infty,
\end{align*}
where the first equality follows from the proof of Proposition~\ref{prop:cs-via-evalue}. 

\paragraph{3. Do weakly asymptotic e-variables result in weakly asymptotic post-hoc CIs?}

Again, the answer is no. Using the same weakly asymptotic e-variables as above, we have 
\begin{align*}
    \E_P\left[\frac{E_n}{1 + \eps_n + \delta_n E_n}\right] &= \frac{1 -\delta_n}{1 + \eps_n + \delta_n} + \frac{u_n \delta_n }{1 + \eps_n + u_n\delta_n} \xrightarrow{n\to\infty} 2>1. 
\end{align*}

\section{Additional Results}
\label{app:additional-results}

\subsection{Two alternative asymptotic e-variables}
\label{app:alternative-evars}

Here we present two additional asymptotic e-variables. For parameters $\lambda,\eta$, and a sequence $(\lambda_n)$ define the objects
\begin{align} 
    E_n^\reg(\theta; \lambda, \eta) &:= \exp\left(\frac{\lambda S_n(\theta)}{\sqrt{n}\sigmahat_n + \eta V_n(\theta)} - \frac{\lambda^2}{2}\right). \label{eq:e-reg} \\ 
    E_n^\sn(\theta; \lambda^n) &:= \exp\bigg(\sum_{i\leq n}\lambda_i(X_i-\theta) - \frac{1}{6}\sum_{i\leq n}\lambda_i^2[(X_i -\theta)^2 + 2\sigmahat_{i-1}^2]\bigg),\label{e:sn} 
\end{align}
The superscript \sn refers to ``self-normalized,'' named as such because it is based on, and proved using, the self-normalized process described in  \citet[Table 3]{howard2020time}. Note that it is defined using a sequence of parameters $(\lambda_n)$ instead of a single value and thus depends on weighted versions of $S_n(\theta)$ and $V_n(\theta)$.  We present $E_n^\sn$ out of independent interest in asymptotic e-variables. It will not result in a useful \aphci. 

Meanwhile,  \reg refers to ``regularized,'' because it is helpful to view the term $\eta V_n(\theta)$ as a regularization term. Indeed, one might expect (or hope) that one could replace $V_n(\theta)$ with $\sigmahat_n$ in the \iwr e-variable. In particular, we could consider $T_n := \exp(\frac{\lambda}{\sqrt{n}} S_n(\theta) / \sigmahat_n - \lambda^2/2)$. This is not the case, however: For Gaussian $X_1,\dots,X_n$, $\log(T_n)$ has a t-distribution and thus no MGF. Therefore $\E[T_n]$ does not exist, let alone converge. Adding the term $\eta V_n(\theta)$ to the denominator regularizes the growth of $(T_n)$, ensuring that the sequence is uniformly integrable and an asymptotic e-variable. 

Let us now state under what conditions these objects define asymptotic e-variables. The proof is in Appendix~\ref{proof:E-sn-and-reg}. 

\begin{theorem}
\label{thm:E-sn-and-reg}
Let $(\lambda_n)$ be a sequence of nonnegative scalars which converge to some finite $\lambda\in\Re$ almost surely.  Then: 
\begin{enumerate}
    \item If $\lambda_n$ is $\calF_n$-measurable for each $n$, $\eta_n\to\eta$ almost surely, $\eta_n> 0$ for all $n$, and $|\lambda_n| \lesssim \eta_n$, then $(E_n^\reg(\theta;\lambda_n,\eta_n))$ is an asymptotic e-variable for all distributions in $\calM_2(\theta)$. 
    \item 
    If $(\lambda_n)$ is predictable (i.e., $\lambda_n$ is $\calF_{n-1}$-measurable for each $n$) such that (i) $\sup_n \lambda_n <\infty$, (ii) $\sum_{i\leq n}\lambda_i^2 \xrightarrow{n\to\infty} L<\infty$ a.s.\, and (iii) $\lambda_i^2 / \sum_{j\leq n}\lambda_j^2 \xrightarrow{n\to\infty}0$ a.s. Then 
    $(E_n^\sn(\theta;\lambda_n))$ is an asymptotic e-variable for $\calM_2(\theta)$. 
\end{enumerate}
The above statements remain true if $\sigmahat_n^2$ is replaced by the (biased) sample variance $s_n^2 = n^{-1}\sum_{i\leq n}(X_i -\Xbar_n)^2$, or any other statistic which converges almost surely to the true variance. 
\end{theorem}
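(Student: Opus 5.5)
For both parts I will use the standard route for pointwise asymptotic e-variables: show $E_n \xrightarrow{d} E$, where $E$ satisfies $\E[E]\le 1$, and then show $(E_n)$ is uniformly integrable under each fixed $P\in\calM_2(\theta)$. By Billingsley's result, these two facts give $\E_P[E_n]\to\E[E]\le 1$.

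\textbf{Part 1 (the \reg e-variable).} Write $Z_n := S_n(\theta)/(\sqrt{n}\sigmahat_n + \eta_n V_n(\theta))$. By the SLLN, $\sigmahat_n\to\sigma$ and $V_n(\theta)/\sqrt n\to\sigma$ almost surely. By the CLT, $S_n(\theta)/\sqrt n\to N(0,\sigma^2)$. Slutsky then gives $Z_n\xrightarrow{d} N(0,(1+\eta)^{-2})$. Since $\lambda_n\to\lambda$ a.s., the continuous mapping theorem gives $E_n^\reg\xrightarrow{d}\exp(\lambda Z-\lambda^2/2)$ with $Z\sim N(0,(1+\eta)^{-2})$. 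The limit has mean $\exp(\lambda^2((1+\eta)^{-2}-1)/2)\le 1$.

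For uniform integrability, the key deterministic fact is $|S_n(\theta)|\le \sqrt n V_n(\theta)$ (Cauchy--Schwarz). This gives $|Z_n|\le \sqrt n/\eta_n$, which is useless on its own. It has to be combined with $|Z_n|\le |S_n(\theta)|/(\sqrt n\sigmahat_n)$.

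I will use the regularization as follows. I will bound $\E\exp(p|\lambda_n Z_n|)$ for some $p>1$. Because $|\lambda_n|\lesssim\eta_n$, we have $|\lambda_n Z_n|\lesssim \eta_n|S_n|/(\sqrt n\sigmahat_n+\eta_n V_n)$. This is at most a constant times the self-normalized ratio $|S_n|/V_n$, which is bounded by $\sqrt n$. That is not enough either: what is needed are exponential moments of $|S_n|/V_n$ that are uniform in $n$. The Giné--Götze--Mason result gives $\sup_n\E\exp(t|S_n/V_n|)<\infty$ for every $t$ when $X\in\dan$, and in particular when $X\in\calM_2$. These are the same tools used for Theorem~\ref{thm:iwr-asymp-evar}. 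An exponential moment uniform in $n$ for some $p>1$ gives uniform integrability.

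The main obstacle is that $\lambda_n$ is random and data-dependent. I would handle this with $\sup_n|\lambda_n|/\eta_n\le c$ together with the uniform-in-$t$ exponential bound applied at $t=pc$. Then $E_n^\reg\le \exp(c\,|S_n|/V_n)$ deterministically up to constants, which removes the dependence on $\lambda_n$.

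\textbf{Part 2 (the \sn e-variable).} I would follow the pattern behind Proposition~\ref{prop:mixture-is-valid} and Observation~\ref{obs:truncation}. Compare $E_n^\sn$ with the Howard et al.\ supermartingale, in which $\sigma^2$ appears in place of $\sigmahat_{i-1}^2$. That process has expectation at most $1$ for predictable $\lambda_i$.

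Since $\sum_i\lambda_i^2\to L<\infty$, the correction term $\frac13\sum_i\lambda_i^2(\sigma^2-\sigmahat_{i-1}^2)$ converges a.s. Condition (iii) makes the weights spread out, so that the weighted average of $\sigmahat_{i-1}^2-\sigma^2$ tends to $0$ by a Toeplitz/Kronecker argument. Hence the ratio of $E_n^\sn$ to the supermartingale tends to $1$ a.s.

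Next, the sum $\sum\lambda_i(X_i-\theta)$ is an $L^2$-bounded martingale with bracket converging to $\sigma^2L$. By condition (iii) and the martingale CLT, $E_n^\sn$ converges in distribution to $\exp(N(0,\sigma^2L)-\sigma^2L/2)$, whose mean is $1$.

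Uniform integrability again comes from a $p>1$ moment. Write $(E_n^\sn)^p$ as the exponent of the supermartingale at $p\lambda_i$ multiplied by a quadratic factor. That factor is controlled by $\sup_n\lambda_n<\infty$ and $L<\infty$. I expect this step to be the most delicate, because the factor involves $\sum\lambda_i^2 (X_i-\theta)^2$, which must be absorbed using the sub-Gaussian-type self-normalized inequality. The final remark of the theorem, that any a.s.-consistent variance estimator works, follows because only $\sigmahat_n\to\sigma$ a.s. was used.
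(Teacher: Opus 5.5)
\textbf{Part 1.} Your argument is essentially the paper's proof, and it is fine. Slutsky gives $E_n^\reg\xrightarrow{d}\exp(\lambda Z-\lambda^2/2)$ with $Z\sim N(0,(1+\eta)^{-2})$. The deterministic bound $E_n^\reg\le\exp(c\,|S_n(\theta)/V_n(\theta)|)$, combined with the Gin\'e--G\"otze--Mason exponential moments, gives uniform integrability; this is Lemma~\ref{lem:Ereg_ui}(iii). Billingsley's theorem then finishes.

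\textbf{Part 2.} There is a genuine gap at the martingale CLT step. Take a fixed (non-triangular) weight sequence with $\sum_{i\le n}\lambda_i^2\to L<\infty$. Then $\sum_{i\le n}\lambda_i(X_i-\theta)$ converges almost surely to $\sum_{i\ge1}\lambda_i(X_i-\theta)$, and the first nonzero summand never becomes negligible. So the Lindeberg condition fails as soon as some $\lambda_i\neq0$. The limit is in general not Gaussian, and $\frac16\sum_i\lambda_i^2(X_i-\theta)^2$ tends in general to a random variable rather than to $\sigma^2L/6$.

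Condition (iii) only appears to supply the needed negligibility. When $L>0$, the ratio in (iii) tends to $\lambda_i^2/L$ for each fixed $i$. Hence (ii) and (iii) together force $\lambda_i=0$ for every $i$, and then $L=0$. In that case $E_n^\sn\equiv1$ and part 2 is trivial. Your CLT, and the uniform-integrability step you left open, hold only in this vacuous sense. The honest proof of part 2 is exactly this one-line observation.

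This is not a technicality: without (iii) the claim is false. With finitely supported weights, $E_n^\sn$ stops changing in $n$, so nothing asymptotic can happen. For example, take Rademacher data ($\theta=0$, $\sigma^2=1$), $\lambda_3=2$, and $\lambda_i=0$ otherwise. Using $\sigmahat_2^2=(X_1-X_2)^2/2\in\{0,2\}$, one gets $\E_P[E_n^\sn]=e^{-2/3}\cosh(2)\cdot\frac12(1+e^{-8/3})\approx1.03$ for every $n\ge3$. So no CLT-plus-uniform-integrability argument can prove a nontrivial version.

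\textbf{Comparison with the paper.} The paper takes no distributional limit in part 2. It factors $E_n^\sn$ as the Howard et al.\ supermartingale times the predictable correction $\exp\{\frac13\sum_{i\le n}\lambda_i^2(\sigma^2-\sigmahat_{i-1}^2)\}$. It then bounds $\E_P[E_n^\sn]$ by the expectation of that correction and sends it to $1$ via a Toeplitz argument and dominated convergence. Its Toeplitz step uses (iii) at fixed $i$ exactly as yours does, so it too goes through only because of the same degeneracy.
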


It is worth emphasizing that for $E_n^\reg$, when we say that $\eta>0$ is fixed, we mean both that it cannot be data-dependent, but also that it cannot depend on the sample size $n$. This is contrast to nonasymptotic inference, where a fixed parameter can be optimized depending on the sample size (in the Chernoff method, for instance).

We can also extend $(E_n^\reg)$ to the distribution-uniform setting as follows. The proof is in Appendix~\ref{proof:unif-reg-eval}.

\begin{theorem}
\label{thm:unif-reg-eval}
Let $(X_n)$ be iid with mean $\theta$ and $\calP$-uniformly bounded skew. 
Let $(\lambda_n)$ be a bounded, deterministic sequence converging to $\lambda$.  Suppose $0<\inf_{P\in\calP} \sigma_P$ and let $(\eta_n)$ be a deterministic sequence such that $\lambda_n\lesssim\eta_n$ and $\eta_n>0$. Then $(E_n^\reg(\theta;\lambda_n,\eta))$ is a $\calP$-uniform asymptotic e-variable. 
\end{theorem}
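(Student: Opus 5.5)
We need $\limsup_n \sup_{P\in\calP} \E_P[E_n^\reg(\theta;\lambda_n,\eta)] \le 1$. I will invoke the distribution-uniform analogue of Billingsley's result from Appendix~\ref{app:du-convergence-in-exp}. It says that if a statistic converges in distribution uniformly over $\calP$ and the sequence is $\calP$-uniformly integrable, then the means converge uniformly. The plan therefore has two parts, applied to
\[
Z_n := \frac{S_n(\theta)}{\sqrt{n}\sigmahat_n + \eta V_n(\theta)},
\qquad
E_n^\reg = \exp(\lambda_n Z_n - \lambda_n^2/2).
\]
Part (a) is a uniform distributional limit: $Z_n \to N(0, 1/(1+\eta)^2)$ uniformly in $\calP$. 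Part (b) is $\calP$-uniform integrability of $\exp(\lambda_n Z_n)$. Together these give
\[
\E_P[E_n^\reg] \to \exp\bigl(\lambda^2/(2(1+\eta)^2) - \lambda^2/2\bigr) \le 1
\]
uniformly, so the e-variable is even conservative.

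\textbf{Part (a).} I will argue uniformly in $P$. By the uniform Berry--Esseen bound \citep{bentkus1996berry} under bounded skew, $S_n(\theta)/(\sqrt n\sigma_P)$ converges to $N(0,1)$ uniformly over $\calP$. Next, both $\sigmahat_n^2/\sigma_P^2$ and $V_n(\theta)^2/(n\sigma_P^2)$ converge to $1$ in probability uniformly in $P$. This follows from a uniform weak law: the normalized variables $(X_i-\theta)^2/\sigma_P^2$ have uniformly bounded $3/2$-moments by the skew assumption, hence are uniformly integrable. The condition $\inf_P \sigma_P>0$ enters here, since it guarantees the normalization is by a quantity bounded away from zero. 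Dividing numerator and denominator of $Z_n$ by $\sqrt n\sigma_P$ and applying a uniform Slutsky argument then gives the limit in (a).

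\textbf{Part (b).} By Cauchy--Schwarz, $|S_n(\theta)|\le \sqrt n V_n(\theta)$. Hence
\[
|Z_n| \le \frac{\sqrt n V_n}{\sqrt n\sigmahat_n + \eta V_n}.
\]
This bound is not uniformly bounded, because $\sigmahat_n$ can be small relative to $V_n/\sqrt n$. It is, however, always at most $\sqrt n/\eta$. To get uniform integrability I will show $\sup_n\sup_P \E_P[\exp(2\lambda_n Z_n)]<\infty$, splitting on the event
\[
B_n = \{\sqrt n\sigmahat_n \ge c V_n\} \quad \text{for a small constant } c.
\]
On $B_n$, $Z_n$ is dominated by $S_n/((c+\eta)V_n)$, a self-normalized sum. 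Its exponential moments can be controlled by the standard sub-Gaussian bound for symmetrized self-normalized sums, or by Berry--Esseen tail estimates combined with the deterministic bound $\sqrt n$, as in the paper's proof for the IWR e-variable in Theorem~\ref{thm:iwr-asymp-evar}. On $B_n^c$, I use $Z_n\le \sqrt n/\eta$, so the contribution is at most $e^{2\lambda_n\sqrt n/\eta}\sup_P P(B_n^c)$. Note that
\[
n\sigmahat_n^2 \approx V_n^2 - (S_n)^2/n,
\]
so $B_n^c$ forces $|S_n|/V_n$ to be close to $\sqrt n$. This is a large deviation of the self-normalized statistic, and its probability decays like $e^{-c'n}$ uniformly, for instance via Shao-type self-normalized bounds. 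The condition $\lambda_n\lesssim\eta$ is what makes the exponent $2\lambda_n\sqrt n/\eta$ beaten by $c'n$; here I may need to take $c$ small relative to $\lambda/\eta$.

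The main obstacle is Part (b): obtaining exponential-moment control that is uniform in $P$ using only third moments. Specifically, it requires a distribution-uniform version of the self-normalized large-deviation estimate on $B_n^c$. If the Shao route does not give uniformity, I would instead bound $P(B_n^c)$ through a truncation argument using the uniform integrability of $(X_i-\theta)^2/\sigma_P^2$.
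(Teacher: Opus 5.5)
Your argument is correct in substance. For the convergence hypothesis it takes a different and sounder route than the paper.

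\textbf{The convergence hypothesis.} Both proofs feed Proposition~\ref{prop:uniform-billingsley}. For its second hypothesis, the paper posits the degenerate limit $-\lambda^2/2$. It then bounds $\int_0^K P(E_n^\reg>u)\,\d u$ through tail estimates for $|S_n(\theta)|/(n\sigmahat_n^2)$, using Markov, von Bahr--Esseen, and $\inf_P\sigma_P>0$. In effect it works with $S_n(\theta)/(n\sigmahat_n^2+\eta V_n(\theta))$ rather than your $Z_n$, whose denominator is of order $\sqrt n$. For $E_n^\reg$ as defined, $Z_n$ has a non-degenerate limit. The paper's claimed conclusion $\sup_P\E_P[E_n^\reg\wedge K]\to0$ cannot hold, since $E_n^\reg>0$ converges in law to a strictly positive limit.

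Your Part (a) identifies the correct limit $N(0,(1+\eta)^{-2})$. It uses uniform Berry--Esseen for $S_n(\theta)/(\sqrt n\sigma_P)$, a uniform weak law for $\sigmahat_n^2/\sigma_P^2$ and $V_n^2(\theta)/(n\sigma_P^2)$ (from the uniformly bounded $3/2$-moments of $(X_1-\theta)^2/\sigma_P^2$), and uniform Slutsky. Since $x\mapsto e^x\wedge K$ is bounded and Lipschitz and $\lambda_n\to\lambda$ deterministically, this gives the second hypothesis with limiting mean $\exp(\lambda^2/(2(1+\eta)^2)-\lambda^2/2)\le 1$. That matches the pointwise computation in the proof of Theorem~\ref{thm:E-sn-and-reg}. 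One correction: $Z_n$ is scale invariant, so $\inf_P\sigma_P>0$ plays no role in your argument, contrary to what you say.

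\textbf{Uniform integrability.} Here you create an obstacle that is not there. Since $\sqrt n\sigmahat_n\ge 0$, you have $|Z_n|\le |S_n(\theta)|/(\eta V_n(\theta))=|T_n|/\eta$ on the whole sample space. This is your $B_n$ bound with $c=0$, and it is exactly the paper's reduction (Lemma~\ref{lem:Ereg_ui}(ii)). Hence $\sup_n\sup_P\E_P\exp(2|\lambda_n Z_n|)\le 2\exp(4C\Lambda^2/\eta^2)$ with $\Lambda=\sup_n|\lambda_n|$. This follows from the Gin\'e--G\"otze--Mason sub-Gaussian bound, whose constant is made free of $P$ through the Berry--Esseen bound on $\sup_P\E_P|T_\ell|$, as in Step~I of the \iwr proof.

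Berry--Esseen together with $|T_n|\le\sqrt n$ does not by itself give exponential moments: the $O(n^{-1/2})$ Kolmogorov error times $e^{U\sqrt n}$ diverges. It only supplies that first-moment input. So the split on $B_n$, and the uniform self-normalized large-deviation estimate you flag as the main obstacle, are unnecessary. This matters, because your truncation fallback would not obviously deliver the $e^{-C\sqrt n}$ decay of $\sup_P P(B_n^c)$ needed to offset $e^{2\lambda_n\sqrt n/\eta}$.
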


\subsection{An APH-CI based on \texorpdfstring{$E_n^\reg$}{\reg}}
\label{app:reg-aphci}

Next we state the \aphci corresponding to the asymptotic e-variable $(E_n^\reg)$. One can perform the arithmetic to write down an \aphci based on $(E_n^\sn)$, but it is the union of several disjoint intervals. Moreover, while one of these intervals contains the sample mean $\Xbar_n$ and behaves similarly to $\calH_n^\iwr$ and $\calH_n^\reg$  in terms of width, the outermost intervals behave oddly and go off to $\pm \infty$, making the resulting CI vacuous. 

The proof of the following result is in Appendix~\ref{proof:reg-aphci}.

\begin{theorem}
\label{thm:reg-aphci}
Let $X_1,\dots,X_n$ be iid with finite variance and  mean $\mu$. 
For any $\lambda>0$ and $\alpha\in(0,1)$, set 
\begin{equation*}
    A_{n,\alpha}(\lambda) = \frac{\log(2/\alpha) + \lambda^2/2}{\lambda\sqrt{n}}.  
\end{equation*}
For any fixed $\lambda,\eta>0$ both independent of $\alpha$, $(\calH_n^\reg(\alpha;\lambda,\eta))$ is an \aphci for $\E X_1$,  where 
    \begin{equation}
    \label{eq:reg-aphci}
        \calH_n^\reg(\alpha;\lambda,\eta) := (\Xbar_n \pm W_n^\reg(\alpha;\lambda,\eta)), \quad W_n^\reg(\alpha;\lambda,\eta) := \frac{A_{n,\alpha}(\lambda) \sigmahat_n( 1  + \sqrt{B_{\alpha,\eta}(\lambda)})}{ 1 - A_{n,\alpha}^2(\lambda)\eta^2},
    \end{equation}
    and $B_{\alpha,\eta}(\lambda) =  1  - ( 1 - A_{n,\alpha}^2(\lambda)\eta^2)(1 - \eta^2 \frac{n-1}{n})$. 
\end{theorem}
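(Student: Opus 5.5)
The plan is to invoke Proposition~\ref{prop:cs-via-evalue} with the e-variable $E_n^\reg(\theta;\lambda,\eta)$ from Theorem~\ref{thm:E-sn-and-reg}, part 1. Take the constant sequences $\lambda_n\equiv\lambda$ and $\eta_n\equiv\eta$, which trivially satisfy $|\lambda_n|\lesssim\eta_n$. That result then says $(E_n^\reg(\mu;\lambda,\eta))$ is an asymptotic e-variable for $\calM_2(\mu)$.

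As in the \iwr case (Theorem~\ref{thm:iwr-aphci}), a single one-sided exponent does not give a two-sided interval. I would therefore use the average
\[
\tfrac12\bigl(E_n^\reg(\theta;\lambda,\eta)+E_n^\reg(\theta;-\lambda,\eta)\bigr).
\]
This is again an asymptotic e-variable because $\limsup$ is subadditive, and this is where the factor $\log(2/\alpha)$ comes from. Monotonicity and right-continuity of the resulting family $\{\theta: E_n(\theta)<1/\alpha\}$ are immediate, since it is a sublevel set of a fixed statistic. Rather than inverting the average exactly, I would use the bound $\tfrac12(e^a+e^{-a})\ge \tfrac12 e^{|a|}$. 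Then $\theta$ is excluded only if
\[
\frac{\lambda |S_n(\theta)|}{\sqrt n\sigmahat_n+\eta V_n(\theta)}\ge \log(2/\alpha)+\lambda^2/2 .
\]
So any set containing the sublevel set still has risk at most the risk of the sublevel set. The risk is the expectation of the e-variable, by the identity used in Appendix~\ref{sec:alternative-defns}, and its limsup is at most one.

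The main work is algebra: show that the set of $\theta$ satisfying the reverse inequality is exactly the interval $\Xbar_n\pm W_n^\reg$. Write $d=\Xbar_n-\theta$. Then $S_n(\theta)=nd$ and $V_n(\theta)^2=n\shat_n^2+nd^2=(n-1)\sigmahat_n^2+nd^2$. The inclusion condition becomes
\[
\sqrt n|d|< A_{n,\alpha}(\lambda)\bigl(\sigmahat_n+\eta\sqrt{(n-1)\sigmahat_n^2/n+d^2}\bigr),
\]
with $A=A_{n,\alpha}(\lambda)$ as defined. I would isolate the square root and square, which is valid when $|d|/A-\sigmahat_n\ge0$; otherwise inclusion holds trivially. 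This gives a quadratic inequality in $|d|$ with leading coefficient $1-A^2\eta^2$ (after normalizing by $n$ appropriately). Its positive root is
\[
\frac{A\sigmahat_n\bigl(1+\sqrt{1-(1-A^2\eta^2)(1-\eta^2\tfrac{n-1}{n})}\bigr)}{1-A^2\eta^2},
\]
which matches $W_n^\reg$ with $B_{\alpha,\eta}(\lambda)$.

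The main obstacle I expect is bookkeeping in this inversion: tracking the $\sqrt n$ normalizations, handling the case $1-A^2\eta^2\le0$ (where we set the interval to $\Re$, which is harmless for large $n$ since $A\to0$), and checking that the solution set is exactly an interval, i.e.\ that the other root is negative. A mismatch of constants, such as whether the $\sqrt n$ sits in $A$ or in $d$, would be resolved by rescaling. The validity of the overall argument does not depend on this, only the exact form of the width does.
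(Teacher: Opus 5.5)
Your argument is essentially the paper's. The relaxed set $\{\theta:\lambda|S_n(\theta)|/(\sqrt{n}\,\sigmahat_n+\eta V_n(\theta))<\log(2/\alpha)+\lambda^2/2\}$ coincides exactly with the paper's set $\{E_n^\reg(\theta;\lambda,\eta)<2/\alpha\}\cap\{E_n^\reg(\theta;-\lambda,\eta)<2/\alpha\}$. Your ``average, then enlarge'' step is the same bound $\ind{\max_j E_n^j\ge 2/\alpha}/\alpha\le\tfrac12\sum_j E_n^j$ that proves Proposition~\ref{prop:cs-via-compound-eval} with $K=2$. The paper then inverts by the same squaring and keeps the larger root. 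Three details of your inversion need correcting.

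First, the inclusion condition is $|d|<A\bigl(\sigmahat_n+\eta\sqrt{(n-1)\sigmahat_n^2/n+d^2}\bigr)$, with no $\sqrt{n}$ on the left; it comes from dividing $n|d|<\sqrt{n}A(\sqrt{n}\,\sigmahat_n+\eta V_n(\theta))$ by $n$. With your extra factor the leading coefficient would be $n-A^2\eta^2$ and the root would not match $W_n^\reg$. Your later steps already use the correct form, so this is only a display error.

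Second, and more substantively, the check you propose is false: the other root is generally positive, not negative. Write $a=A^2\eta^2\in(0,1)$, $b=\eta^2(n-1)/n$ and $B=1-(1-a)(1-b)$. Whenever $b<1$ one has $B\in(0,1)$, so $y_-=A\sigmahat_n(1-\sqrt{B})/(1-a)>0$.

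What is actually needed, and what the paper verifies, is $y_-<A\sigmahat_n\le y_+$. Squaring is legitimate only on $\{|d|\ge A\sigmahat_n\}$, where the quadratic inequality then yields exactly $[A\sigmahat_n,y_+)$. Its union with the trivially included region $\{|d|<A\sigmahat_n\}$ is $\{|d|<y_+\}$. The bound on $y_-$ follows from $\sqrt{B}>a$, which holds because $B=a+b(1-a)\ge a>a^2$. The bound on $y_+$ follows from $1+\sqrt{B}\ge1>1-a$.

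Finally, the case $A^2\eta^2\ge1$ is not merely a small-$n$ artifact. For every $n$ it occurs for all sufficiently small $\alpha$, and the risk takes a supremum over all $\alpha>0$. It is nevertheless harmless at every $n$ for a simpler reason: replacing $\calH_n(\alpha)$ by $\Re$ only enlarges the set, which can only lower the risk.
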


While both $\lambda$ and $\eta$ are allowed to vary with $n$ in Theorem~\ref{thm:E-sn-and-reg} for $E_n^\reg$, they have been fixed in Theorem~\ref{thm:reg-aphci}. This is because we would like both terms in $A_{n,\alpha}(\lambda)$ to decay at the same rate, which forces $\lambda$ to be constant. This in turn forces $\eta$ to be constant since Theorem~\ref{thm:E-sn-and-reg} requires $\lambda_n\lesssim \eta_n$.

Notice that by a single application of the SLLN and the continuous mapping theorem, 
\[\sqrt{n}W_n^\reg(\alpha;\lambda,\eta) \xrightarrow[n\to\infty]{a.s.} \sigma g(\lambda,\alpha)( 1 + \eta),\]
which is strictly larger than the limiting width of $(\calH_n^\iwr)$ given in Section~\ref{sec:iwr-evariable} since $\eta$ must be positive. (Recall that $g(\lambda,\alpha) = \log(2/\alpha)/\lambda + \lambda/2$.) The performance of $(\calH_n^\reg)$ is explored experimentally in Appendix~\ref{app:sims}, as is the effect of $\eta$ on the width of the bounds. 

In practice, we recommend fixing $\eta = 10^{-3}$. While $\eta$ can, theoretically, be taken as small as desired, values smaller than $10^{-3}$ have no noticeable effect on the width.  Further, Proposition \ref{prop:exact-type1-error-reg} in the following subsection demonstrates that small values of $\eta$ increase the Type-I error of $\calH_n^\reg$. Thus, values of $\eta$ that are too small risk inflating error without any corresponding improvement in the width of the CI. 

\subsection{Exact asymptotic type-I error of the APH-CIs}
\label{app:type-I-error}

The \aphcis developed in the main text are designed to bound the post-hoc risk below 1. A natural secondary question is to evaluate their classical asymptotic Type-I error, i.e., the probability of miscoverage at a fixed (data-independent) significance level $\alpha$.

Let $Z \sim \calN(0,1)$ with cdf denoted by $\Phi$. By the CLT and the LLN, the self-normalized ratio converges in distribution to a standard normal under any distribution $P\in\calM_2(\theta^\star)$:
\[
\frac{S_n(\theta^\star)}{V_n(\theta^\star)} = \frac{S_n(\theta^\star)/\sqrt{n}}{V_n(\theta^\star)/\sqrt{n}} \xrightarrow{d} Z.
\]
We begin by studying the \aphci defined by the \iwr e-variable. 

\begin{proposition}
\label{prop:exact-type1-error-iwr}
    Let $\calH_n^\iwr(\alpha;\lambda) =  \{ \theta: E_n^\iwr(\theta; \lambda) <2/\alpha \text{ and }E_n^\iwr(\theta; -\lambda) < 2/\alpha\}$ be the APH-CI induced by the \iwr asymptotic e-value for any $\lambda>0$. Then:
\begin{equation}
\label{eq:asymp-type1error-iwr}
\lim_{n\to\infty} P\big(\theta^\star \notin \mathcal{H}_n^\iwr(\alpha;\lambda)\big) = 2\left(1 - \Phi\left( \frac{\log(2/\alpha)}{\lambda} + \frac{\lambda}{2} \right)\right).
\end{equation}
\end{proposition}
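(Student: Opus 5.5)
The plan is to rewrite the miscoverage event as a tail event for the self-normalized statistic $T_n := S_n(\theta^\star)/V_n(\theta^\star)$, and then pass to the limit using $T_n \xrightarrow{d} Z$.

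First I unpack the event. By definition, $\theta^\star \notin \calH_n^\iwr(\alpha;\lambda)$ if and only if $E_n^\iwr(\theta^\star;\lambda)\geq 2/\alpha$ or $E_n^\iwr(\theta^\star;-\lambda)\geq 2/\alpha$. Since $\lambda>0$ and the exponential is increasing, taking logarithms gives
\[
E_n^\iwr(\theta^\star;\lambda)\geq 2/\alpha \iff \lambda T_n - \lambda^2/2 \geq \log(2/\alpha) \iff T_n \geq c,
\]
where $c := \log(2/\alpha)/\lambda + \lambda/2 = g(\lambda,\alpha)$. In the same way, the condition $E_n^\iwr(\theta^\star;-\lambda)\geq 2/\alpha$ is equivalent to $T_n \leq -c$. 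Because $c>0$, these two events are disjoint, so
\[
P\big(\theta^\star\notin\calH_n^\iwr\big) = P(T_n\geq c) + P(T_n\leq -c) = P(|T_n|\geq c).
\]
On the event $V_n(\theta^\star)=0$ we have $S_n(\theta^\star)=0$ as well. Adopting the convention $0/0=0$, this event falls inside $\{|T_n|<c\}$. Its probability is also at most $P(X_1=\theta^\star)^n\to0$ unless $P$ is degenerate, so it does not affect the limit either way.

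Next I take the limit. By the convergence displayed just before the proposition (or the result of \citet{gine1997student} under $\dan$), $T_n\xrightarrow{d} Z\sim N(0,1)$. The set $\{x:|x|\geq c\}$ is closed, and its boundary $\{\pm c\}$ has probability zero under $Z$. The portmanteau theorem therefore gives
\[
P(|T_n|\geq c)\to P(|Z|\geq c) = 2(1-\Phi(c)),
\]
which is exactly \eqref{eq:asymp-type1error-iwr}. Equivalently, one can invoke Pólya's theorem: since $\Phi$ is continuous, the CDFs of $T_n$ converge to $\Phi$ uniformly.

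The only delicate point is the bookkeeping in the first step: using $\lambda>0$ so that the two one-sided events correspond to $T_n\geq c$ and $T_n\leq -c$ with $c>0$, and handling the degenerate case $V_n=0$. Everything else is a direct application of the CLT for the self-normalized sum. The statement implicitly assumes a nondegenerate distribution in $\calM_2(\theta^\star)$ or $\dan(\theta^\star)$, and I take that as given.
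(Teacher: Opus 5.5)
Your proof is correct and takes essentially the same route as the paper: you take logarithms to rewrite the miscoverage event as $\{|S_n(\theta^\star)/V_n(\theta^\star)| \geq \log(2/\alpha)/\lambda + \lambda/2\}$, then pass to the limit using $S_n(\theta^\star)/V_n(\theta^\star) \xrightarrow{d} Z$. Your use of the portmanteau theorem (the boundary $\{\pm c\}$ is $Z$-null) is the precise justification for the limiting step the paper loosely attributes to the continuous mapping theorem, and your handling of the event $V_n(\theta^\star)=0$ is a harmless extra detail the paper omits.
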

\begin{proof}
    Taking logarithms, the miscoverage event is equivalent to:
    \[
    \lambda \frac{S_n(\theta^\star)}{V_n(\theta^\star)} -\frac{\lambda^2}{2} \ge \log(2/\alpha) \quad\text{or}\quad -\lambda \frac{S_n(\theta^\star)}{V_n(\theta^\star)} -\frac{\lambda^2}{2} \ge \log(2/\alpha),
    \]
    which is equivalent to 
    \[
    \left| \frac{S_n(\theta^\star)}{V_n(\theta^\star)} \right|\ge \frac{\log(2/\alpha)}{\lambda} +\frac{\lambda}{2}.
    \]
    By the continuous mapping theorem, substituting the limit $S_n(\theta^\star)/V_n(\theta^\star)\xrightarrow{d} Z$ yields:
    \[\lim_{n\to\infty} P\big(\theta^\star \notin \mathcal{H}_n^\iwr(\alpha;\lambda)\big) = 2\left(1 - \Phi\left( \frac{\log(2/\alpha)}{\lambda} + \frac{\lambda}{2} \right)\right),
    \]
    completing the argument. 
\end{proof}
Note that if we set $\lambda_\alpha=\sqrt{2\log(2/\alpha)}$, the asymptotic Type-I error evaluates to $2(1-\Phi(\sqrt{2\log(2/\alpha)}))$ which is smaller than $\alpha$ for any $\alpha\in(0,1)$, reflecting the price paid for post-hoc validity.

Next we study the \aphci defined by the mixture of the \iwr e-variable over a truncated Gaussian; see Section~\ref{sec:aphcis-via-mom}. 

\begin{proposition}
\label{prop:exact-type1-error-mix-iwr}
    The asymptotic Type-I error induced by $\calH_n^{\mix,\iwr}$ is given by:
\begin{equation}
\label{eq:asymp-type1error-mix-iwr}
\lim_{n\to\infty} P\big(\theta^\star \notin \mathcal{H}_n^{\mix,\iwr}(\alpha;R,\kappa)\big) = 2(1 - \Phi(y_\alpha^\star)),
\end{equation}
where $y_\alpha^\star$ is the unique positive root of the implicit equation $I_R(y_\alpha^\star,\frac{\kappa^2+1}{2\kappa^2}) = Z_{R,\kappa}/\alpha$.
\end{proposition}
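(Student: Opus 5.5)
We follow the same template as the proof of Proposition~\ref{prop:exact-type1-error-iwr}: rewrite the miscoverage event as an event about the self-normalized statistic $T_n := S_n(\theta^\star)/V_n(\theta^\star)$ alone, then pass to the limit using $T_n \xrightarrow{d} Z$.

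\textbf{Step 1: the mixture e-variable as a function of $T_n$.} Write $u = (\kappa^2+1)/(2\kappa^2)$. The mixture e-variable underlying $\calH_n^{\mix,\iwr}$ (Appendix~\ref{app:truncated-gaussian}) is
\[
\int_{-R}^{R} e^{\lambda T_n - \lambda^2/2}\, \frac{e^{-\lambda^2/(2\kappa^2)}}{Z_{R,\kappa}} \,\d\lambda .
\]
Completing the square, this integral equals
\[
\frac{I_R(T_n,u)}{Z_{R,\kappa}} .
\]
Hence $\theta^\star \notin \calH_n^{\mix,\iwr}(\alpha;R,\kappa)$ exactly when
\[
I_R(T_n,u) \geq \frac{Z_{R,\kappa}}{\alpha}.
\]

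\textbf{Step 2: reduce to a threshold on $|T_n|$.} The map $y \mapsto I_R(y,u)$ is continuous and even. It is strictly increasing in $|y|$, because it is the integral of $\cosh(\lambda y)$ against a positive symmetric weight. It also tends to $\infty$ as $|y|\to\infty$. Moreover,
\[
I_R(0,u) = \int e^{-u\lambda^2}\,\d\lambda < Z_{R,\kappa}
\]
whenever $\alpha < 1$, since $u > 1/(2\kappa^2)$. Together these give that the equation $I_R(y,u) = Z_{R,\kappa}/\alpha$ has a unique positive root $y_\alpha^\star$. The miscoverage event is then exactly $\{|T_n| \geq y_\alpha^\star\}$.

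As a consistency check with Theorem~\ref{thm:iwr-mixture-aphci}, we verify that inverting $|S_n(\theta)|/V_n(\theta) < y^\star_\alpha$ in $\theta$ gives the interval $\Xbar_n \pm W_n^{\mix,\iwr}$. Use $V_n(\theta)^2 = n\shat_n^2 + n(\Xbar_n-\theta)^2$ and solve the resulting quadratic. This confirms that the two sets coincide, including the vacuous case $n \le (y^\star_\alpha)^2$, which only occurs for finitely many $n$.

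\textbf{Step 3: pass to the limit.} By the CLT and LLN (or by \citet{gine1997student} for $\dan(\theta^\star)$), we have $T_n \xrightarrow{d} Z$. The boundary of $\{|y| \geq y^\star_\alpha\}$ has Gaussian measure zero, so the portmanteau theorem gives
\[
P(|T_n| \geq y_\alpha^\star) \to 2\bigl(1-\Phi(y_\alpha^\star)\bigr).
\]

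\textbf{Main obstacle.} The delicate point is Step 2: establishing strict monotonicity in $|y|$ and uniqueness of the positive root, including the check at $y=0$. The $\cosh$ representation handles both cleanly. The remaining work is routine algebra.
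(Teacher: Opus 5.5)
Your proposal is correct and uses the same ingredients as the paper's proof. You write the miscoverage event as $\{I_R(T_n,u)\ge Z_{R,\kappa}/\alpha\}$, use $T_n\xrightarrow{d}Z$, and invoke symmetry and strict monotonicity of $I_R$ in $|y|$. The only difference is the order: you reduce to $\{|T_n|\ge y_\alpha^\star\}$ at finite $n$ and then apply the portmanteau theorem, whereas the paper applies the continuous mapping theorem to $E_n^{\mix,\iwr}$ and does the reduction in the limit. Your ordering, together with the $\cosh$ representation, makes explicit the continuity-set condition and the monotonicity and uniqueness facts that the paper asserts without proof.
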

\begin{proof}
    By definition, 
    \[
    E_n^{\mix,\iwr}(\theta^\star; R,\kappa) = Z_{R,\kappa}^{-1} I_R\left(\frac{S_n(\theta^\star)}{V_n(\theta^\star)}, \frac{\kappa^2+1}{2\kappa^2}\right).
    \]
    Because the function $y\mapsto I_R(y,u)$ is continuous, we may apply the continuous mapping theorem to obtain:
    \[
    E_n^{\mix,\iwr}(\theta^\star; R,\kappa) \xrightarrow{d} Z_{R,\kappa}^{-1} I_R\left(Z, \frac{\kappa^2+1}{2\kappa^2}\right).
    \]
    The asymptotic Type-I error is therefore given by:
    \[
    \lim_{n\to\infty} P\big(\theta^\star \notin \mathcal{H}_n^{\mix,\iwr}(\alpha;R,\kappa)\big) = P\left(I_R\left(Z, \frac{\kappa^2+1}{2\kappa^2}\right) \ge Z_{R,\kappa}/\alpha \right).
    \]
    The function $I_R(y,u)$ is symmetric about $y=0$ and strictly monotonically increasing in $|y|$ so the threshold inequality is satisfied if and only if $|Z|\ge y_\alpha^\star$, where $y_\alpha^\star$ is the unique positive root of the implicit equation $I_R(y_\alpha^\star,\frac{\kappa^2+1}{2\kappa^2}) = Z_{R,\kappa}/\alpha$. Thus, the asymptotic Type-I error is:
    \[
    \lim_{n\to\infty} P\big(\theta^\star \notin \mathcal{H}_n^{\mix,\iwr}(\alpha;R,\kappa)\big) = 2(1 - \Phi(y_\alpha^\star)).\qedhere
    \]
\end{proof}

Finally, we study the \aphci given by the \rws asymptotic e-variable. 

\begin{proposition}
\label{prop:exact-type1-error-rws}
    Under the setting of Theorem \ref{thm:rws-aphci}, the asymptotic Type-I error induced by $\calH_n^{\rws}$ vanishes:
\begin{equation}
\label{eq:asymp-type1error-rws}
\lim_{n\to\infty} P\big(\theta^\star \notin \mathcal{H}_n^{\rws}(\alpha;\rho)\big) = 0.
\end{equation}
\end{proposition}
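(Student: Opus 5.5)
The plan is to write the miscoverage event in terms of the self-normalized statistic and then compare the threshold it must exceed, which diverges like $\sqrt{\log n}$, against $S_n/(\sqrt{n}\sigma)$, which is tight. Fix $\alpha\in(0,1)$. Since $C n^{\delta\cdot 0.24}\to\infty$, for all large $n$ the truncation condition $Cn^{\delta\cdot0.24}\geq 1/\alpha$ holds. From that point on, $\calH_n^\rws(\alpha;\rho)$ is the nontrivial interval $(\Xbar_n\pm W_n^\rws(\alpha;\rho))$. Miscoverage is then exactly the event $|\Xbar_n-\theta^\star|\geq W_n^\rws(\alpha;\rho)$. Multiplying both sides by $\sqrt n$, this reads
\[
\left|\frac{S_n(\theta^\star)}{\sqrt n}\right| \;\geq\; \sqrt{n}\,W_n^\rws(\alpha;\rho) = \sqrt{\frac{2\rho\hat u_n+2}{n\rho}\log\left(\frac{\sqrt{\rho\hat u_n+1}}{\alpha}\right)}.
\]

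Next I would show that the right-hand side diverges almost surely. Because $P\in\calM_{2+\delta}(\theta^\star)$ has finite variance $\sigma^2$, the SLLN gives $\shat_n^2\to\sigma^2$ a.s. Hence $\hat u_n/n=\shat_n^2+\log^{-1}n\to\sigma^2$ a.s. The first factor therefore satisfies $(2\rho\hat u_n+2)/(n\rho)\to 2\sigma^2$. The logarithmic factor satisfies $\log(\sqrt{\rho\hat u_n+1}/\alpha)\geq \tfrac12\log(\rho\hat u_n)\to\infty$. If $\sigma^2=0$ the claim is trivial, because then $S_n(\theta^\star)=0$ a.s. and the interval always covers. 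Otherwise, the first factor is bounded below by $\sigma^2$ eventually, and the second diverges. So $\sqrt n W_n^\rws\to\infty$ a.s., and in fact $\sqrt{n/\log n}\,W_n^\rws\to\sigma$, consistent with the paper's earlier remark.

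On the left-hand side, the CLT gives $S_n(\theta^\star)/\sqrt n\xrightarrow{d}N(0,\sigma^2)$, so this sequence is tight. It is enough to have $\E|S_n/\sqrt n|^2=\sigma^2$ together with Chebyshev. To finish, fix $K>0$ and split:
\[
P\big(\theta^\star\notin\calH_n^\rws\big)\leq P\big(|S_n(\theta^\star)/\sqrt n|\geq K\big)+P\big(\sqrt n W_n^\rws< K\big).
\]
The second term tends to $0$ because almost sure divergence implies convergence in probability. The first term is at most $\sigma^2/K^2$. Letting $n\to\infty$ and then $K\to\infty$ gives a limit of $0$. The limit exists and equals zero because the probabilities are nonnegative and their limsup is zero.

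The only mildly delicate point is confirming that the truncation case cannot make the set vacuous infinitely often. This is immediate for fixed $\alpha$, since the truncation level $Cn^{\delta\cdot0.24}$ grows without bound. There is no genuine obstacle beyond keeping track of this.
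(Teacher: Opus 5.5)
Your proposal is correct and follows essentially the same route as the paper: once the truncation condition $Cn^{\delta\cdot 0.24}\geq 1/\alpha$ holds, $\sqrt{n}\,W_n^\rws\to\infty$ almost surely while $\sqrt{n}\,|\Xbar_n-\theta^\star|$ is tight, and splitting at a fixed level sends the miscoverage probability to zero. Your version makes two small improvements. The union bound with $P(\sqrt{n}\,W_n^\rws<K)\to 0$ correctly handles the paper's step ``for sufficiently large $n$, $\frac{\sqrt n}{\sigma}W_n^\rws\geq B$ almost surely,'' which does not follow literally from almost-sure divergence. Your separate treatment of $\sigma^2=0$ covers the degenerate case that the paper's division by $\sigma$ skips.
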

\begin{proof}
    Note that for sufficiently large $n$, the threshold condition $C n^{\delta\cdot 0.24}\geq 1/\alpha$ is satisfied and $\calH_n^\rws(\alpha;\rho) = (\Xbar_n \pm W_n^\rws(\rho))$. The miscoverage event is therefore equivalent to:
    \[
    \left|\Xbar_n - \theta^\star\right| >W_n^\rws(\rho).
    \]
    After scaling by $\sqrt{n}/\sigma$, the miscoverage event can be written equivalently as:
    \[
    \frac{\sqrt{n}\left|\Xbar_n - \theta^\star\right| }{\sigma}> \frac{\sqrt{n}}{\sigma}W_n^\rws(\rho).
    \]
    By the CLT and the continuous mapping theorem, the left-hand side converges in distribution to $|Z|$. However, as noted in the main text, the entire scaled threshold diverges:
    \[
    \frac{\sqrt{n}}{\sigma}W_n^\rws(\rho) \xrightarrow[n\to\infty]{a.s.} \infty.
    \]
    Let $B>0$ be arbitary. For sufficiently large $n$, we have $\frac{\sqrt{n}}{\sigma}W_n^\rws(\rho)\ge B$ almost surely, which implies:
    \[
    P\left(\frac{\sqrt{n}\left|\Xbar_n - \theta^\star\right| }{\sigma}> \frac{\sqrt{n}}{\sigma}W_n^\rws(\rho)\right) \le P\left(\frac{\sqrt{n}\left|\Xbar_n - \theta^\star\right| }{\sigma}> B\right).
    \]
    The right-hand side converges to $2(1-\Phi(B))$, so that:
    \[
    \limsup_{n\to\infty} P\left(\frac{\sqrt{n}\left|\Xbar_n - \theta^\star\right| }{\sigma}> \frac{\sqrt{n}}{\sigma}W_n^\rws(\rho)\right) \le 2(1-\Phi(B)).
    \]
    Since $B>0$ is arbitrary, we conclude:
    \[
    \lim_{n\to\infty} P\big(\theta^\star \notin \mathcal{H}_n^{\rws}(\alpha;\rho)\big) = 0.\qedhere
    \]
\end{proof}
This exact zero error highlights the fundamental structural difference between an \aphci and an \aphcs. Because the confidence sequence must grow by a $\sqrt{\log(n)}$ factor to survive an infinite horizon of stopping times, its width shrinks only at a rate $\sqrt{\log(n)/n}$, which eventually far exceeds the actual $O_P(1/\sqrt{n})$ sampling error, driving the Type-I error to zero.

Additionally, we study the asymptotic Type-I error of the \reg \aphci introduced in Subsection \ref{app:alternative-evars}. The proof is essentially similar to that of Proposition \ref{prop:exact-type1-error-iwr}, noting that
\[
\frac{ S_n(\theta^\star)}{\sqrt{n}\sigmahat_n + \eta V_n(\theta^\star)} \xrightarrow{d} \frac{1}{1+\eta}Z
\]
which follows by Slutsky's theorem, so we omit it.

\begin{proposition}
\label{prop:exact-type1-error-reg}
    Let $\calH_n^\reg(\alpha;\lambda,\eta) =  \{ \theta: E_n^\reg(\theta; \lambda,\eta) <2/\alpha \text{ and }E_n^\reg(\theta; -\lambda,\eta) < 2/\alpha\}$ be the APH-CI induced by the \reg asymptotic e-value for any $\lambda,\eta>0$. Then:
\begin{equation}
\label{eq:asymp-type1error-reg}
\lim_{n\to\infty} P\big(\theta^\star \notin \mathcal{H}_n^\reg(\alpha;\lambda,\eta)\big) = 2\left(1 - \Phi\left(\left(1+\eta\right)\left( \frac{\log(2/\alpha)}{\lambda} + \frac{\lambda}{2} \right)\right)\right).
\end{equation}
\end{proposition}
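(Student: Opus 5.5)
The plan mirrors the proof of Proposition~\ref{prop:exact-type1-error-iwr}. First I rewrite the miscoverage event in terms of a single self-normalized statistic. Write
\[
T_n := \frac{S_n(\theta^\star)}{\sqrt{n}\sigmahat_n + \eta V_n(\theta^\star)}.
\]
Since the exponent of $E_n^\reg(\theta^\star;\pm\lambda,\eta)$ is $\pm\lambda T_n - \lambda^2/2$, taking logarithms shows that $\theta^\star\notin\calH_n^\reg(\alpha;\lambda,\eta)$ holds if and only if $\lambda T_n - \lambda^2/2\ge \log(2/\alpha)$ or $-\lambda T_n - \lambda^2/2\ge\log(2/\alpha)$. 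Because $\lambda>0$, this is equivalent to
\[
|T_n|\ge c_\alpha:=\frac{\log(2/\alpha)}{\lambda}+\frac{\lambda}{2}.
\]

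Second, I identify the limit law of $T_n$. I divide numerator and denominator by $\sqrt{n}$. The CLT gives $S_n(\theta^\star)/\sqrt{n}\xrightarrow{d} \sigma Z$. By the SLLN, $\sigmahat_n\to\sigma$ and $V_n(\theta^\star)/\sqrt{n}=(n^{-1}\sum_{i\le n}(X_i-\theta^\star)^2)^{1/2}\to\sigma$ almost surely. Hence the scaled denominator converges almost surely to $\sigma(1+\eta)>0$. Slutsky's theorem then yields $T_n\xrightarrow{d} Z/(1+\eta)$. Here $\sigma>0$ is needed; this is implicit, since otherwise $\calM_2$ degenerates.

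Third, I pass to the limit in probability. The map $t\mapsto|t|$ is continuous, and the limit $|Z|/(1+\eta)$ has a continuous distribution, so the boundary $\{|t|=c_\alpha\}$ is a null set for the limit law. The portmanteau theorem then gives
\[
P(|T_n|\ge c_\alpha)\to P(|Z|\ge (1+\eta)c_\alpha)=2\bigl(1-\Phi((1+\eta)c_\alpha)\bigr),
\]
which is exactly~\eqref{eq:asymp-type1error-reg}.

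The only mildly delicate step is the Slutsky argument: one must confirm that the denominator limit is strictly positive, which requires $\sigma>0$, and one must apply the continuity-set version of the portmanteau theorem rather than a bare continuous mapping theorem. Beyond that, the proof is routine.
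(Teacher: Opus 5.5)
Your proposal is correct and follows essentially the same route as the paper, which reduces the miscoverage event to $|T_n|\ge \log(2/\alpha)/\lambda+\lambda/2$ exactly as in the \iwr case and then uses Slutsky's theorem to get $T_n\xrightarrow{d} Z/(1+\eta)$. Your explicit appeal to the portmanteau theorem on a continuity set, and your remark that $\sigma>0$ is needed, are a more careful version of the paper's ``continuous mapping'' step rather than a different argument.
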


\begin{figure}[htbp]
    \centering
    \includegraphics[width=0.95\textwidth]{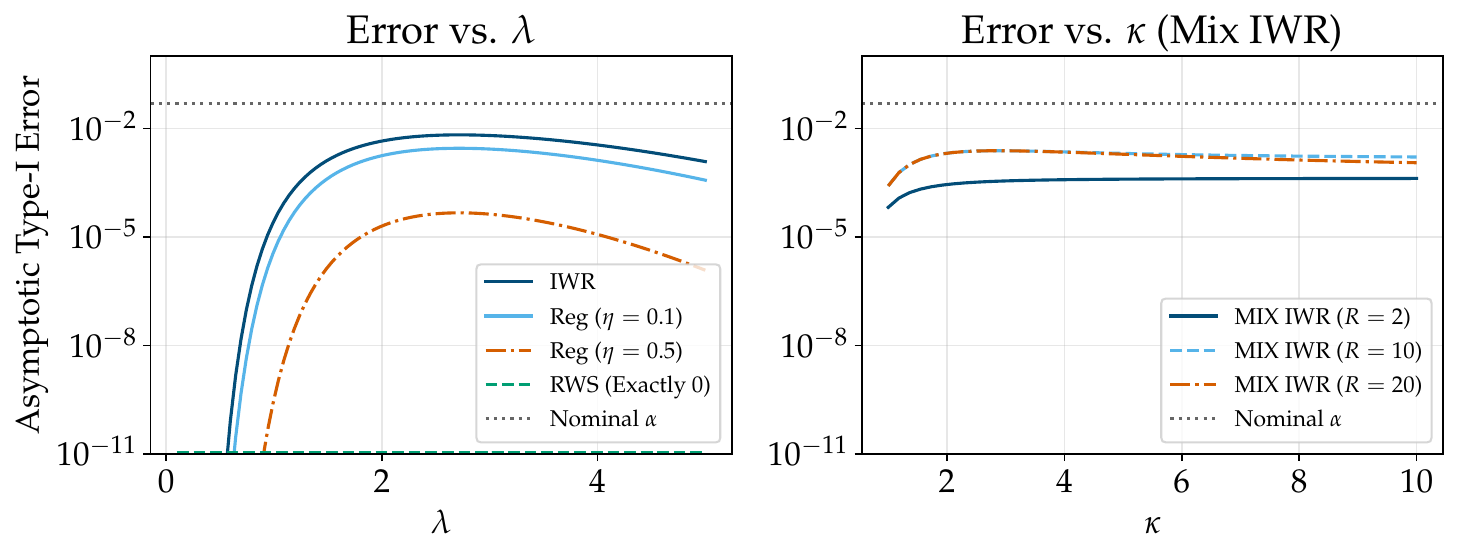}
    \caption{Asymptotic Type-I error of the proposed \aphcis as a function of their respective tuning parameters. The nominal significance level is fixed at $\alpha=0.05$ (dotted black line). Since the true asymptotic errors are much smaller than the nominal level, reflecting the conservative cost of ensuring post-hoc validity, the y-axis is shown on a logarithmic scale. Left: Error for \iwr and \reg ($\eta \in \{0.1,0.5\}$) across values of $\lambda$. The \rws error is plotted at the bottom of the log-scale as its theoretical asymptotic type-I error is exactly zero. Right: Error for \mix,\iwr across $\kappa$ for varying truncation radii ($R\in\{2,10,20\}$). Note that for large truncation bounds $(R\ge10)$, the distribution closely matches an untruncated Gaussian, making the $R=10$ and $R=20$ curves nearly identical.}
    \label{fig:asymptotic_type1_errors}
\end{figure}

\subsection{APH-CIs via compound asymptotic e-variables}
\label{app:aphcis-compound-evals}

Compound e-variables arise naturally in multiple testing; for instance in the e-BH procedure~\citep{wang2022false} and derandomizing knockoffs~\citep{ren2024derandomised}; in fact~\cite{ignatiadis2024asymptotic} show that compound e-values underlie \emph{all} FDR controlling procedures. They refer to a collection of $K$ nonnegative random variables $E^j$, $j=1,\dots,K$, as \emph{compound e-variables} for $\calP$ if $\sup_{P\in\calP}\sum_j \E_P[E^j]\leq K$. We thus refer to $K$ \emph{collections} of random variables $(E_n^j)$, $j=1,\dots,K$ as \emph{asymptotic} compound e-variables for $\calP$ if 
\begin{equation}
    \sup_{P\in\calP} \limsup_{n\to\infty} \sum_{j\leq K}\E_P[E_n^j] \leq K. 
\end{equation}
Likewise, $(E_n^j)$ are $\calP$-uniform asymptotic compound e-variables for $\calP$ if 
\begin{equation}
    \limsup_{n\to\infty} \sup_{P\in\calP}  \sum_{j\leq K}\E_P[E_n^j] \leq K. 
\end{equation}
Asymptotic compound e-variables yield \aphcis as follows. 

\begin{proposition}
\label{prop:cs-via-compound-eval}
    For $j=1,\dots,K$ and each $\theta\in\Theta$,  
    let $(E_n^j(\theta))$ be a collection of random variables. For all $\alpha\in(0,1)$, let $\calH_n(\alpha) = \{ \theta: E_n^j(\theta) < K/\alpha\text{ for all }j\in[K]\}$. Then $(\calH_n)$ is a (uniform) \aphci for $\theta^\star$ if $(E_n^j(\theta^\star))$ are (uniform) asymptotic compound e-variables. 
\end{proposition}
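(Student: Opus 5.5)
The plan is to reduce the risk of $\calH_n(\alpha)$ at the true parameter to the expectation of $\max_j E_n^j(\theta^\star)$, bound the maximum by the sum, and then invoke the compound e-variable condition. The argument is the same as the sufficiency direction of Proposition~\ref{prop:cs-via-evalue}, applied to a single aggregated variable.

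First I would fix $P\in\calP$ and write $\theta^\star=\theta(P)$. I then define
\[
M_n := \max_{j\leq K} E_n^j(\theta^\star).
\]
By construction, $\theta^\star\notin\calH_n(\alpha)$ holds if and only if $E_n^j(\theta^\star)\geq K/\alpha$ for some $j$. That is the same as $M_n\geq K/\alpha$, or equivalently $\alpha\geq K/M_n$.

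Next I would compute the supremum that appears inside $\risk$. The definition of $\calH_n(\alpha)$ is stated only for $\alpha\in(0,1)$, so I adopt the convention $\calH_n(\alpha)=\Theta$ for $\alpha\geq1$; as in Remark~\ref{rem:alpha-range}, this is harmless. With this convention, the set of $\alpha>0$ at which $\theta^\star$ is excluded is $[K/M_n,1)$ whenever $M_n>K$, and it is empty otherwise. Hence
\[
\sup_{\alpha>0}\frac{\ind{\theta^\star\notin\calH_n(\alpha)}}{\alpha}
= \frac{M_n}{K}\,\ind{M_n>K}
\leq \frac{M_n}{K}
\leq \frac{1}{K}\sum_{j\leq K}E_n^j(\theta^\star).
\]
The last step uses nonnegativity of each $E_n^j$, which is implicit in calling them compound e-variables. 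If I instead let $\alpha$ range over all of $(0,\infty)$ with the same formula for $\calH_n(\alpha)$, the supremum is exactly $M_n/K$, so the bound holds either way.

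Taking expectations gives
\[
\risk(\calH_n;\theta^\star,P)\leq K^{-1}\sum_{j\leq K}\E_P[E_n^j(\theta^\star)].
\]
In the pointwise case I take $\limsup_n$ and then $\sup_P$, and the asymptotic compound condition yields a bound of $K/K=1$. In the uniform case I take $\sup_P$ before $\limsup_n$; the inequality is pointwise in $P$, so it survives the supremum, and the uniform compound condition gives the same bound of $1$.

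I do not expect a real obstacle. The only delicate point is handling $\alpha$ outside $(0,1)$ and the strict versus non-strict inequality at the threshold, which is why I state the convention on $\alpha\geq1$ explicitly before computing the supremum.
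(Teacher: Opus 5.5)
Your proposal is correct and follows essentially the same route as the paper: bound $\sup_{\alpha}\ind{\theta^\star\notin\calH_n(\alpha)}/\alpha$ by $\max_j E_n^j(\theta^\star)/K\le K^{-1}\sum_j E_n^j(\theta^\star)$. Then take expectations and limits, with the supremum over $P$ taken before the limit in the uniform case. Your explicit handling of $\alpha\ge 1$ and your exact computation of the supremum are small refinements that the paper leaves implicit.
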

\begin{proof}
The argument is similar to Proposition~\ref{prop:cs-via-evalue}. For each $n$ and $\alpha$, if $\cup_j \{E_n^j(\theta^\star)\geq K/\alpha\}$ occurs, then $1/\alpha \leq E_n^j(\theta^\star)/K$ for some $j$, hence 
    \[\frac{\ind{\cup_j \{E_n^j(\theta^\star)\geq K/\alpha\}}}{\alpha} \leq \max_j \frac{E_n^j(\theta^\star)}{K} \leq \frac{1}{K}\sum_j E_n^j(\theta^\star).\]
    Taking expectations and limits,  
    \[\limsup_{n\to\infty}\E_P\left[\sup_\alpha \frac{\ind{\theta^\star\notin \calH_n(\alpha)}}{\alpha}\right]\leq \limsup_{n\to\infty}\frac{1}{K}\sum_j \E_P[E_n^j(\theta^\star)]\leq 1,\]
    completing the argument. In the distribution-uniform setting we simply take a supremum over distribution before taking limits. 
\end{proof}    

In this work we typically apply this result when $(E^1_n)$ and $(E^2_n)$ are simply two e-variables (e.g., in Theorem~\ref{thm:iwr-aphci}). 

\subsection{Distribution-uniform convergence in expectation}
\label{app:du-convergence-in-exp}

Here we present sufficient conditions under which a sequence of nonnegative random variables constitute a distribution-uniform asymptotic e-variable. In the pointwise setting, to construct asymptotic e-variables we rely on \citet[Theorem 25.12]{billingsley1995proba} which gives conditions under which a sequence of random variables converges in expectation. In particular, he shows that uniform integrability together with convergence in distribution implies convergence in expectation. Since convergence in distribution of a sequence of random variables $(X_n)$ to $X$ is equivalent to $\E[\min(X_n,K)] \to \E[\min(X,K)]$ for all $K>0$, the following result can be seen as an extension of Billingsley's result to the distribution-uniform setting.

\begin{proposition}
\label{prop:uniform-billingsley}
Let $g:\Re\to\Re_{\geq 0}$. Let $\calP$ be a set of distributions and for each $P\in\calP$ let $(T_{n,P})_{n\geq 1}$ be a sequence of statistics depending on $P$.  Let $Z$ be a random variable with law $R$ such that $\E_R[g(Z)]<\infty$.  Suppose further that 
\begin{enumerate}
    \item $\lim_{K\to\infty}\limsup_{n\to\infty} \sup_P \E_P[g(T_{n,P}) \ind{g(T_{n,P})\geq K}] =0$, and 
    \item for all $K>0$, 
    \[\limsup_{n\to\infty} \sup_{P\in\calP} \E_P[g(T_{n,P}) \wedge K] \leq \E_R[g(Z)\wedge K].\]
\end{enumerate}
Then 
\begin{equation}
    \limsup_{n\to\infty}\sup_{P\in\calP} \E_P[g(T_{n,P})]\leq \E_R[g(Z)]. 
\end{equation}
Moreover, if condition (2) is replaced by for all $K>0$, $\limsup_{n\to\infty} \sup_{P\in\calP} |\E_P[g(T_{n,P}) \wedge K] - \E_R[g(Z)\wedge K]|= 0$, then we can conclude that $\limsup_{n\to\infty}\sup_{P\in\calP} |\E_P[g(T_{n,P})]- \E_R[g(Z)]|=0$. 
The same statements hold if $\limsup_n$ is replaced by $\sup_n$ everywhere. 
\end{proposition}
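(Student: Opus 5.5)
The plan is to split $g(T_{n,P})$ into a truncated part and a tail part, using the identity $g = (g\wedge K) + (g-K)_+$. For any $K>0$, $n$ and $P$,
\[
\E_P[g(T_{n,P})] = \E_P[g(T_{n,P})\wedge K] + \E_P[(g(T_{n,P})-K)_+] \leq \E_P[g(T_{n,P})\wedge K] + \E_P[g(T_{n,P})\ind{g(T_{n,P})\geq K}],
\]
using $(g-K)_+ \leq g\,\ind{g\geq K}$ since $g\geq 0$.

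Take $\sup_P$, then $\limsup_n$, and use subadditivity of both operations. This gives
\[
\limsup_n\sup_P \E_P[g(T_{n,P})] \leq \limsup_n\sup_P\E_P[g\wedge K] + \limsup_n\sup_P \E_P[g\,\ind{g\geq K}].
\]
Condition (2) bounds the first term by $\E_R[g(Z)\wedge K]$, which is at most $\E_R[g(Z)]$. The left side does not depend on $K$, so let $K\to\infty$; condition (1) sends the second term to zero. This proves the first claim. Finiteness of $\E_R[g(Z)]$ is not needed here, but it is needed in the two-sided version below.

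For the two-sided version, I bound
\[
|\E_P g - \E_R g| \leq |\E_P[g\wedge K]-\E_R[g\wedge K]| + \E_P[(g-K)_+] + \E_R[(g(Z)-K)_+].
\]
Take $\sup_P$ and $\limsup_n$. The first term vanishes by the strengthened condition (2). The second is handled by condition (1) as $K\to\infty$. The third tends to zero as $K\to\infty$ by dominated convergence, since $\E_R[g(Z)]<\infty$; this is the only place integrability under $R$ is used.

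For the version with $\sup_n$ in place of $\limsup_n$, the same chain of inequalities holds verbatim, because $\sup_n$ is also subadditive and monotone. The one point to check is the order of limits: $K\to\infty$ is taken outside, after bounding the $K$-dependent quantities uniformly in $n$ and $P$. There is no real obstacle; the whole content is choosing the decomposition $g=(g\wedge K)+(g-K)_+$ so that both hypotheses plug in directly.
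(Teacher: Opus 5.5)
Your proposal is correct and follows essentially the same route as the paper: the same decomposition $g \le (g\wedge K) + g\,\ind{g\geq K}$, taking $\sup_P$ and $\limsup_n$ before sending $K\to\infty$, and the same triangle-inequality split for the two-sided claim. The only difference is cosmetic: you bound $\E_R[g(Z)\wedge K]\le \E_R[g(Z)]$ directly where the paper uses monotone convergence, which is also why your remark that integrability under $R$ is needed only for the two-sided claim is right.
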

\begin{proof}
    
    Throughout the proof we write $T_n$ instead of $T_{n,P}$; $P$ should be understood from context. For any $x$ and $K>0$, note that we can write $g(x) = g(x) \wedge K + 0\wedge (g(x) - K)\leq g(x) \wedge K + g(x)\ind{g(x)\geq K}$. Therefore, 
    \begin{align*}
        \E_P[g(T_n)] \leq \E_P[g(T_n) \wedge K] + \E_R[g(T_n) \ind{g(T_n) \geq K}].
    \end{align*}
    Taking the supremum over all $P$ and then the limit as $n\to\infty$, we have 
    \begin{align*}
          \limsup_n \sup_P \E_P[g(T_n)] &\leq \limsup_n \sup_P \left(\E_P[g(T_n) \wedge K] + \E_R[g(T_n) \ind{g(T_n) \geq K}]\right) \\ 
          &\leq \E_R[g(Z)\wedge K]  + \limsup_n \sup_P\E_R[g(T_n) \ind{g(T_n) \geq K}]
    \end{align*}
    using assumption (1). Letting $K\to\infty$ the second term on the right hand side above vanishes by assumption (2) and the first converges to $\E_R[g(Z)]$ by the monotone convergence theorem. If we replace assumption (2) with $\limsup_n\sup_P |\E_P[g(T_n) \wedge K] - \E_R[g(Z)]|= 0$ for each $K>0$, a similar proof applies. By the triangle inequality, write 
    \begin{align*}
        |\E_P[g(T_n) - \E_R[g(Z)]] &\leq |\E_P[g(T_n)\wedge K] - \E_R[g(Z) \wedge K]|\\ 
        &\qquad + \E_P[g(T_n)\ind{g(T_n)\geq K}] + \E_R[g(Z)\ind{g(Z) \geq K}].
    \end{align*}
    Hence, 
    \begin{align*}
         &\limsup_n \sup_P|\E_P[g(T_n) - \E_R[g(Z)]] \\
         &\leq 
        \limsup_n\sup_P\E_P[g(T_n)\ind{g(T_n)\geq K}] + \E_R[g(Z)\ind{g(Z) \geq K}].
    \end{align*}
    Letting $K\to\infty$, both terms on the right of the inequality vanish---the first by assumption and the second by integrability of $g(Z)$. This completes the argument. 
\end{proof}

\subsection{Generalized sufficient conditions for the mixture process}
\label{app:generalized-mixture}

Here we give a mild generalization of Proposition~\ref{prop:mixture-is-valid}, by weakening the sufficient conditions for a mixture process to be an asymptotic e-variable. We comment more on the scope of the generalization after stating the result. 

\begin{proposition}
\label{prop:mixture-generalized}
    Let $\Lambda\subset\Re$ and let $\pi$ be any (data-free) probability distribution on $\Lambda$. Suppose that for each $\lambda\in\Lambda$, the sequence $(E_n(\lambda))$ is an asymptotic e-variable for some set of distributions $\calP$. Assume that for every $P \in \calP$, there exists a measurable function $h_P : \Lambda \to [0,\infty)$ such that:
    \begin{itemize}
        \item $\sup_{n\ge1}\E_P[E_n(\lambda)] \le h_P(\lambda)$ for $\pi$-almost all $\lambda$ (domination),
        \item $\int_\Lambda h_P(\lambda) \d\pi(\lambda) < \infty$ (integrability). 
    \end{itemize}
    Then, the mixture process defined by
    \begin{equation}
        E_n(\pi) := \int_\Lambda E_n(\lambda) \d\pi(\lambda),
    \end{equation}
     is an asymptotic e-variable for $\calP$. 
     Further, if $\sup_P h_P$ is measurable and integrable, i.e., $\int_\Lambda h_P(\lambda)\d\pi(\lambda)<\infty$, then the mixture $(E_n(\pi))$ is a $\calP$-uniform asymptotic e-variable. 
\end{proposition}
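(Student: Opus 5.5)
Since each $E_n(\lambda)$ is nonnegative, Tonelli's theorem gives, for every $P\in\calP$ and every $n$,
\[
\E_P[E_n(\pi)] = \int_\Lambda \E_P[E_n(\lambda)]\,\d\pi(\lambda).
\]
This reduces the claim to interchanging $\limsup_n$ with a $\pi$-integral of the deterministic functions $f_n^P(\lambda) := \E_P[E_n(\lambda)]$. By hypothesis these satisfy $\limsup_n f_n^P(\lambda)\le 1$ for each $\lambda$, since $(E_n(\lambda))$ is an asymptotic e-variable. They are also dominated by $h_P$ for $\pi$-a.e. $\lambda$, uniformly in $n$.

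\textbf{Pointwise case.} I will apply the reverse Fatou lemma. The functions $h_P - f_n^P$ are nonnegative $\pi$-a.e., and $h_P$ is $\pi$-integrable, so Fatou applied to $h_P-f_n^P$ yields
\[
\limsup_{n\to\infty}\int_\Lambda f_n^P\,\d\pi \le \int_\Lambda \limsup_{n\to\infty} f_n^P\,\d\pi \le \int_\Lambda 1\,\d\pi = 1.
\]
Combined with the Tonelli identity, this gives $\limsup_n \E_P[E_n(\pi)]\le 1$ for every $P$, so $(E_n(\pi))$ is an asymptotic e-variable. One measurability point needs a brief check: $\lambda\mapsto f_n^P(\lambda)$ must be measurable. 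This follows from joint measurability of $(\omega,\lambda)\mapsto E_n(\lambda)(\omega)$, which is implicitly assumed for $E_n(\pi)$ to be defined at all.

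\textbf{Uniform case.} The natural object is $F_n(\lambda):=\sup_{P\in\calP} f_n^P(\lambda)$, since $\sup_P \E_P[E_n(\pi)] \le \int_\Lambda F_n\,\d\pi$. The functions $F_n$ are dominated by $H:=\sup_P h_P$, which is assumed measurable and integrable; I read the stated integrability condition as $\int H\,\d\pi<\infty$. Reverse Fatou then gives
\[
\limsup_n \sup_P \E_P[E_n(\pi)] \le \int_\Lambda \limsup_n \sup_P \E_P[E_n(\lambda)]\,\d\pi(\lambda).
\]
To conclude I need the integrand to be at most $1$. That requires each $(E_n(\lambda))$ to be a $\calP$-uniform asymptotic e-variable, not merely a pointwise one.

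\textbf{Main obstacle.} This last point is the step I expect to be delicate, for two reasons. First, with only pointwise asymptotic e-variables the uniform conclusion can fail, because $\limsup_n\sup_P$ of $f_n^P(\lambda)$ may exceed $1$. So I would state and use the hypothesis that each $(E_n(\lambda))$ is $\calP$-uniform, as the uniform part of Proposition~\ref{prop:mixture-is-valid} implicitly does. Second, measurability of $F_n$ must be handled, since it is a supremum over a possibly uncountable $\calP$. I would sidestep this by working with an outer integral, or by simply assuming measurability, which is in the same spirit as the assumed measurability of $\sup_P h_P$. Reverse Fatou remains valid for upper integrals with an integrable dominating function, so the argument goes through with this modification.
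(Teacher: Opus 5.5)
Your proposal is correct and follows the paper's proof essentially step for step: Tonelli moves the expectation inside the mixture, then reverse Fatou is applied with dominator $h_P$ in the pointwise case, and with dominator $\sup_P h_P$ in the uniform case after bounding $\sup_P\E_P[E_n(\pi)]\le\int\sup_P\E_P[E_n(\lambda)]\,\d\pi$. Your caveats are also well founded: the paper's uniform argument ends ``by definition of a $\calP$-uniform asymptotic e-variable,'' so it silently assumes each $(E_n(\lambda))$ is $\calP$-uniform (without this the claim already fails for $\pi$ a point mass); it also passes over the measurability of $\lambda\mapsto\sup_P\E_P[E_n(\lambda)]$, and it contains the typo $\int h_P\,\d\pi$ where $\int\sup_P h_P\,\d\pi$ is meant.
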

\begin{proof}
      Fix an arbitrary distribution $P \in \calP$. By Tonelli's Theorem (since $E_n(\lambda)$ is non-negative), we can exchange the expectation and the mixture integral:
    \[
    \E_P[E_n(\pi)] = \E_P\left[\int_\Lambda E_n(\lambda)\d\pi(\lambda)\right] = \int_\Lambda \E_P[E_n(\lambda)] \d\pi(\lambda).
    \]
    We can now apply Reverse Fatou's Lemma to the sequence of functions $\lambda\mapsto\E_P[E_n(\lambda)]$, which states that if such a sequence is dominated by an integrable function, then the limit superior of the mixture is less than or equal to the mixture of their limit superiors:
    \[
    \limsup_{n\to\infty} \int_\Lambda \E_P[E_n(\lambda)]\d\pi(\lambda) \le \int_\Lambda \limsup_{n\to\infty}\E_P[E_n(\lambda)]\d\pi(\lambda).
    \]
    Since $\left(E_n(\lambda)\right)$ is an asymptotic e-variable for each $\lambda$, we have $\limsup_{n\to\infty}\E_P[E_n(\lambda)]\le 1$ for all $\lambda$. Therefore:
    \[
    \limsup_{n\to\infty} \int_\Lambda \E_P[E_n(\lambda)]\d\pi(\lambda) \le \int_\Lambda 1\cdot \d\pi(\lambda)=1.
    \]
    Since $P\in\calP$ was arbitrary, the result holds for the entire class $\calP$.

    In the distribution-uniform case, the proof is largely unchanged after noting that 
    \[\sup_{P}\E_P[E_n(\pi)] = \sup_{P} \int_\Lambda \E_P[E_n(\lambda)]\d\pi(\lambda) \leq \int_\Lambda \sup_{P} \E_P[E_n(\lambda)]\d\pi(\lambda).\]
    Since $g(\lambda) = \sup_P h(\lambda)$ is measurable by assumption, we may apply Reverse Fatou's lemma to $\lambda\mapsto \sup_P \E_P[E_n(\lambda)] \leq g(\lambda)$, which gives 
    \[\limsup_n \int_\Lambda \sup_P \E_P[E_n(\lambda)] \d\pi(\lambda) \leq \int_\Lambda \limsup_n \sup_P \E_P[E_n(\lambda)] \d\pi(\lambda)\leq 1,\]
    by definition of a $\calP$-uniform asymptotic e-variable. This completes the proof.    
\end{proof}

The two conditions of domination and integrability in Proposition~\ref{prop:mixture-is-valid} are strictly less prohibitive than requiring that $(\sup_{\lambda\in\Lambda}E_n(\lambda))$ be uniformly integrable (which is what is required in Proposition~\ref{prop:mixture-is-valid}).  Indeed, consider $X\sim N(0,1)$, $\Lambda=\mathbb{R}$, $\pi = N(0,1)$ and let $$E(\lambda)=\exp\left(\lambda X - \frac{\lambda^2}{2}\right).$$ Then $\sup_\lambda E(\lambda) =\exp\left(X^2/2\right)$ and $\E\left[\exp\left(\frac{X^2}{2}\right)\right]=\infty$, so it is not uniformly integrable. However, $\E[E(\lambda)]=1$ so we can simply choose $h(\lambda)=1$ in Proposition~\ref{prop:mixture-is-valid} and conclude that $E_n(\pi)$ is an e-variable.

\subsection{Further details on the truncated Gaussian mixture}
\label{app:truncated-gaussian}

First let us present the asymptotic e-variable that one obtains from mixing the \iwr e-variable according to Corollary~\ref{cor:compact_mix}. We do the same for the \reg e-variable presented in Appendix~\ref{app:alternative-evars}. 

\begin{proposition}
\label{prop:mixture-evals}
    For any fixed $R,\kappa,\eta>0$, define $Z_{R,\kappa} := \kappa\sqrt{2\pi} (\Phi(R/\kappa) - \Phi(-R/\kappa))$. Then $(E_n^{\mix,\iwr}(\theta;R,\kappa))$ and $(E_n^{\mix, \reg}(\theta; R,\kappa,\eta))$ are asymptotic e-variables for all distributions in $\calM_2(\theta)$, where 
    \begin{equation}
        E_n^{\mix,\iwr}(\theta; R,\kappa) := Z_{R,\kappa}^{-1} I_R\left(\frac{S_n(\theta)}{V_n(\theta)}, \frac{\kappa^2+1}{2\kappa^2}\right),
    \end{equation}
    and 
    \begin{equation}
        E_n^{\mix,\reg}(\theta; R,\kappa,\eta) := Z_{R,\kappa}^{-1} I_R\left(\frac{S_n(\theta)}{\sqrt{n}\sigmahat_n + \eta V_n(\theta)}, \frac{\kappa^2+1}{2\kappa^2}\right).
    \end{equation}
\end{proposition}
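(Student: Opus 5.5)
The proof has two parts: apply Corollary~\ref{cor:compact_mix} (equivalently Proposition~\ref{prop:mixture-is-valid}) with $\Lambda=[-R,R]$ and $\pi$ the $N(0,\kappa^2)$ distribution truncated to $[-R,R]$, and then compute the mixture in closed form. Write $Y_n := S_n(\theta)/V_n(\theta)$ for the \iwr case and $Y_n := S_n(\theta)/(\sqrt{n}\sigmahat_n + \eta V_n(\theta))$ for the \reg case. In both cases the base e-variable is $\exp(\lambda Y_n - \lambda^2/2)$. The truncated Gaussian has density $\exp(-\lambda^2/(2\kappa^2))/Z_{R,\kappa}$ on $[-R,R]$, and its normalizing constant $\int_{-R}^R e^{-\lambda^2/(2\kappa^2)}\d\lambda$ is exactly $Z_{R,\kappa}$.

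The closed form comes from one integral. Set $u=(\kappa^2+1)/(2\kappa^2)$. Then
\[
\int_{-R}^R \exp\left(\lambda y - u\lambda^2\right)\d\lambda = \exp\left(\frac{y^2}{4u}\right)\int_{-R}^R \exp\left(-u\left(\lambda - \frac{y}{2u}\right)^2\right)\d\lambda ,
\]
obtained by completing the square. Substituting $t=\sqrt{2u}(\lambda - y/(2u))$ gives the factor $\sqrt{\pi/u}$ times $\Phi(R\sqrt{2u} - y/\sqrt{2u}) - \Phi(-R\sqrt{2u} - y/\sqrt{2u})$, which is $I_R(y,u)$. Dividing by $Z_{R,\kappa}$ then yields both displayed formulas.

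Validity follows from Proposition~\ref{prop:mixture-is-valid}, which needs two facts. First, each fixed-$\lambda$ sequence must be an asymptotic e-variable for $\calM_2(\theta)$. For \iwr this is Theorem~\ref{thm:iwr-asymp-evar}, since $\calM_2(\theta)\subseteq\dan(\theta)$. For \reg it is Theorem~\ref{thm:E-sn-and-reg}, part 1, with constant $\lambda_n=\lambda$ and $\eta_n=\eta$. The condition $|\lambda|\lesssim\eta$ holds with a constant depending on $R/\eta$, because $\lambda$ ranges over the compact set $[-R,R]$. Second, $\sup_{|\lambda|\le R}E_n(\lambda)$ must be uniformly integrable for each $P$. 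For \iwr this is Lemma~\ref{lem:Ereg_ui}, uniform integrability on compacts.

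For \reg I would argue directly. We have $\sup_{|\lambda|\le R}\exp(\lambda Y_n - \lambda^2/2)\le e^{R|Y_n|}$, and $|Y_n|\le |S_n|/(\eta V_n)\le \sqrt{n}/\eta$, so this crude bound alone does not give uniform integrability. Instead I would bound $e^{R|Y_n|}\le e^{RY_n}+e^{-RY_n}$ and show $\sup_n \E e^{2R|Y_n|}<\infty$. The denominator is at least $\eta V_n$, so $|Y_n|\le |S_n/V_n|/\eta$, and uniform integrability reduces to that of $\exp(c\,|S_n/V_n|)$ for the constant $c=R/\eta$. For that I would use the sub-Gaussian tail $P(|S_n/V_n|\ge x)\le 2e^{-x^2/4}$ for symmetric variables, or, more simply, the uniform exponential integrability of self-normalized sums from \citet{gine1997student}, which is the same tool behind Lemma~\ref{lem:Ereg_ui}.

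The main obstacle is this uniform integrability step for \reg. If the \citet{gine1997student} route does not transfer cleanly, I would fall back on the explicit sub-Gaussian tail bound for self-normalized sums and integrate it to bound a higher moment of $e^{c|S_n/V_n|}$.
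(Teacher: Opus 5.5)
Your proposal is correct and matches the paper's proof. The paper's written argument is the same completing-the-square computation giving $Z_{R,\kappa}^{-1}I_R(M,u)$ with $u=(\kappa^2+1)/(2\kappa^2)$, and it leaves validity implicit via Corollary~\ref{cor:compact_mix} and Proposition~\ref{prop:mixture-is-valid}. Your direct uniform-integrability argument for the \reg case (lower-bounding the denominator by $\eta V_n(\theta)$ and then invoking \citet{gine1997student}) is exactly Lemma~\ref{lem:Ereg_ui}(ii), which you could simply cite.
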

\begin{proof}
    Both $E_n^{\iwr}$ and $E_n^\reg$ can be written as $E_n(\lambda) = \exp(\lambda M - \lambda^2/2)$ for some $M$. Let $\rho$ be the density of a truncated Gaussian on $[-R,R]$ with mean 0 and variance $\kappa^2$, i.e., 
\begin{equation}
    \rho(\xi) = Z_{R,\kappa}^{-1}\exp\left(\frac{-\xi^2}{2\kappa^2}\right) \text{~ where ~} Z_{R,\kappa} := \sqrt{2\pi\kappa^2 }\left( \Phi\left(\frac{R}{\kappa}\right) - \Phi\left(\frac{-R}{\kappa}\right)\right),
\end{equation}
where $\Phi$ is the cdf of the standard normal. 
Letting $u = 1/2 + 1/(2\kappa^2)$, observe that 
\begin{align*}
E_n(\rho) = \int_{-R}^R E_n(\lambda) \rho(\lambda) \d\lambda &= 
    Z_{R,\kappa}^{-1}\int_{-R}^R \exp\left(\lambda M - \lambda^2 u \right)\d\lambda. 
\end{align*}
Completing the square, write 
\[\exp(\lambda M - \lambda^2 u) = \exp\left(-u\left(\lambda - \frac{M}{2u}\right)^2\right)\exp\left(\frac{M^2}{4u}\right),\]
and let $t = \sqrt{2u}(\lambda - M/(2u))$ so that $\d t = \sqrt{2u}\d \lambda$. Set $t_2 = \sqrt{2u}(R - M/(2u))$ and $t_1 = -\sqrt{2u}(R + M/(2u))$. Then, 
\begin{align*}
    \int_{-R}^R \exp(\lambda M - \lambda^2 u)\d\lambda &= \frac{1}{\sqrt{2u}}\exp\left(\frac{M^2}{4u}\right)\int_{t_1}^{t_2} \exp(-t^2/2)\d t \\
    &= \frac{1}{\sqrt{2u}}\exp\left(\frac{M^2}{4u}\right)\left(\int_{0}^{t_2} \exp(-t^2/2)\d t - \int_0^{t_1} \exp(-t^2/2) \d t\right) \\
    &= \sqrt{\frac{\pi}{u}}\frac{1}{\sqrt{2u}}\exp\left(\frac{M^2}{4u}\right)\left(\Phi(t_2) - \Phi(t_1)\right) \\ 
    &= I_R(M,u),
\end{align*}
using that $\Phi(z) = \frac{1}{\sqrt{2\pi}}\int_0^z \exp(-t^2/2)\d t$. Therefore, $E_n(\rho) = Z_{R,\kappa}^{-1} I_R(M,u)$. Substituting $M = S_n(\theta)/V_n(\theta)$ for $E_n^\iwr$ and $M = S_n(\theta) / (\sqrt{n}\sigmahat_n + \eta V_n(\theta))$ for $E_n^\reg$ completes the proof. 
\end{proof}

Inverting the asymptotic e-variable $(E_n^{\mix,\iwr})$ above gives rise to Theorem~\ref{thm:iwr-mixture-aphci}. 
Let us state the equivalent result for $(E_n^{\mix,\reg})$.

\begin{theorem}
    \label{thm:reg-mixture-aphci}
    Fix any $R,\kappa,\eta>0$ and let $Z_{R,\kappa} = \kappa\sqrt{2\pi} (\Phi(R/\kappa) - \Phi(-R/\kappa))$.   $(\calH_n^{\mix,\reg}(R,\kappa,\eta))$ is an \aphci where $\calH_n^{\mix,\reg}(R,\kappa,\eta) = (\Xbar_n \pm W_n^{\mix,\reg}(R,\kappa,\eta))$ with 
    \begin{equation}
    \label{eq:mix-reg-aphci}
        \quad W_n^{\mix,\reg}(R,\kappa,\eta) = \frac{\sqrt{n}\cdot s_n y^\star_\alpha  ( 1 + \eta D_{\alpha,\eta})}{n - (\eta y_\alpha^\star)^2},
    \end{equation}
    where $D_{\alpha,\eta} = \sqrt{1 + (y_\alpha^\star)^2(1-\eta^2)/n}$ and 
    $y_\alpha^\star$ solves $I_R(y_\alpha^\star,(\kappa^2 + 1)/(2\kappa^2)) = Z_{R,\kappa}/\alpha$. If the denominator in \eqref{eq:mix-reg-aphci} is non-positive, we take the CI to be all of $\Re$.
\end{theorem}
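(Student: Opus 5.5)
The plan is to mirror the proof of Theorem~\ref{thm:iwr-mixture-aphci}: obtain the asymptotic e-variable from Proposition~\ref{prop:mixture-evals}, invert it in $\theta$, and then apply Proposition~\ref{prop:cs-via-evalue}. By Proposition~\ref{prop:mixture-evals}, $(E_n^{\mix,\reg}(\theta^\star;R,\kappa,\eta))$ is an asymptotic e-variable for $\calM_2(\theta^\star)$. Hence $\calH_n(\alpha)=\{\theta: E_n^{\mix,\reg}(\theta;R,\kappa,\eta)<1/\alpha\}$ is an \aphci; monotonicity and right-continuity in $\alpha$ hold because the set is defined by a strict threshold on $1/\alpha$. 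What remains is to show that this set is exactly the interval $(\Xbar_n\pm W_n^{\mix,\reg})$. If the exact inversion fails, it suffices to show containment, $\calH_n(\alpha)\subseteq(\Xbar_n\pm W_n^{\mix,\reg})$, since enlarging the sets only decreases the risk.

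The inversion proceeds in two steps.

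\emph{First}, use the shape of $I_R(\cdot,u)$ with $u=(\kappa^2+1)/(2\kappa^2)$. It is even in $y$ and strictly increasing in $|y|$, since the mixture $\int_{-R}^R e^{\lambda y-\lambda^2u}\,\d\lambda$ is a symmetric mixture of convex functions with derivative $\int \lambda\sinh$-type terms. Therefore $E_n^{\mix,\reg}(\theta)<1/\alpha$ if and only if $|M_n(\theta)|<y^\star_\alpha$, where $M_n(\theta)=S_n(\theta)/(\sqrt n\sigmahat_n+\eta V_n(\theta))$. This holds whenever $Z_{R,\kappa}/\alpha$ exceeds $I_R(0,u)$; otherwise the set is empty or all of $\Re$, which is handled trivially.

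\emph{Second}, solve $|M_n(\theta)|<y$ for $\theta$. Write $d=\Xbar_n-\theta$, so that $S_n=nd$. Using the biased variance, $V_n(\theta)^2=n(s_n^2+d^2)$. Note that $\sqrt n\sigmahat_n$ should be replaced by $\sqrt n s_n$, which Theorem~\ref{thm:E-sn-and-reg}'s closing remark permits. The condition becomes
\[
n|d| < y\bigl(\sqrt n s_n+\eta\sqrt{n}\sqrt{s_n^2+d^2}\bigr),
\]
that is,
\[
\sqrt n|d|-ys_n<\eta y\sqrt{s_n^2+d^2}.
\]
If the left side is negative, the inequality holds trivially. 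Otherwise, square both sides to get a quadratic in $|d|$:
\[
(n-\eta^2y^2)d^2-2\sqrt n ys_n|d|+y^2s_n^2(1-\eta^2)<0.
\]
When $n>\eta^2y^2$, the quadratic is positive at infinity, so the solution set is $|d|$ below the larger root,
\[
\frac{\sqrt n ys_n+\sqrt{ny^2s_n^2-(n-\eta^2y^2)y^2s_n^2(1-\eta^2)}}{n-\eta^2y^2}.
\]
Simplifying the discriminant $ny^2s_n^2\bigl[1-(1-\eta^2y^2/n)(1-\eta^2)\bigr]$ should produce the factor $(1+\eta D_{\alpha,\eta})$. I expect this to be the main bookkeeping obstacle: matching the displayed $D_{\alpha,\eta}=\sqrt{1+y^2(1-\eta^2)/n}$ may require rearrangement. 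If the algebra yields a slightly different radical, I would instead verify that the displayed $W_n^{\mix,\reg}$ upper-bounds the true root. That is enough by the containment remark above.

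When $n\le\eta^2y^2$, the set can be unbounded, so declaring the CI to be $\Re$ is valid. Combining the two steps with Proposition~\ref{prop:cs-via-evalue} (or directly $\risk\le\E[E_n]$) completes the argument.
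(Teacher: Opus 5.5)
Your proposal is correct and follows essentially the same route as the paper: mix the \reg e-variable (with $s_n$ in place of $\sigmahat_n$) via Proposition~\ref{prop:mixture-evals}, reduce $E_n^{\mix,\reg}(\theta)<1/\alpha$ to $|M_n(\theta)|<y^\star_\alpha$ using the monotonicity of $I_R$ in $|y|$, then solve the resulting quadratic and keep the larger root. The bookkeeping you flagged closes exactly, since $1-(1-\eta^2y^2/n)(1-\eta^2)=\eta^2\bigl(1+y^2(1-\eta^2)/n\bigr)=\eta^2D_{\alpha,\eta}^2$; the larger root is therefore precisely $W_n^{\mix,\reg}$, and no fallback bound is needed.
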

\begin{proof}
    Now we turn to $E_n^{\mix,\reg}$. We use a version of $E_n^\reg$ with $\shat_n$ replacing $\sigmahat_n$ in the denominator, which is valid by Theorem~\ref{thm:reg-aphci}. 
Let $w_n= \sqrt{n}s_n$. 
The equation $|S_n(\theta)/V_n(\theta)|<y^\star$ thus becomes
\[n|\Delta| - y^\star w_n < y^\star \eta \sqrt{w_n^2 + n\Delta^2}. \]
If $n|\Delta| < y^\star w_n$ then $\theta \in  (\Xbar_n \pm y^\star w_n / n)$, which is thus part of our solution. 
If $n\Delta \geq y^\star w_n$ then both terms in the above display are nonnegative so we can square both sides which eventually yields the inequality 
\[(n^2 - (y^\star\eta)^2n)\Delta^2 - 2ny^\star w_n \Delta + (y^\star w_n)^2 - (y^\star\eta)^2 w_n^2<0.\]
After a significant amount of arithmetic, the roots of this polynomial are given by 
\[\Delta_{\pm} = \frac{y^\star w_n \pm y^\star w_n \eta \sqrt{1 + (y^\star)^2(1-\eta^2)/n}}{n - (y^\star\eta)^2} = \frac{y^\star w_n(1 \pm \eta D)}{n - (y^\star\eta)^2},\]
where $D = D_{\alpha,\eta}$. Conservatively taking the positive root as it defines a larger interval, we obtain 
\[\theta \in \left(\Xbar_n \pm \frac{y^\star w_n(1 \pm \eta D)}{n - (y^\star\eta)^2}\right),\]
which is a superset of $(\Xbar_n \pm y^\star w_n/n)$.  
\end{proof}

The performance of $(\calH_n^{\mix,\reg})$ compared to $(\calH_n^{\mix,\iwr})$ is explored in Figure~\ref{fig:iwr_vs_reg} in Appendix~\ref{app:sims}, in addition to the effect of $\eta$.

Next let us analyze the limiting behavior of $y_\alpha^\star$ so that we can relate the width of $W_n^{\mix,\iwr}$ and $W_n^{\mix,\reg}$ to that of the Wald interval.

\begin{lemma}
\label{lem:ystar-bound}
    For $\kappa=1$, we have that $y_\alpha^\star \xrightarrow{R\to\infty}2\sqrt{\log(\sqrt{2}/\alpha)}$ where $y^\star$ satisfies $I_R(y^\star,1) = Z_{R,1}/\alpha$ and $Z_{R,\kappa} = \kappa\sqrt{2\pi}(\Phi(R/\kappa) - \Phi(-R/\kappa))$. 
\end{lemma}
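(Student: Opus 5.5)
The plan is to compute the pointwise limit of both sides of the defining equation as $R\to\infty$, and then transfer that limit to the root $y_\alpha^\star$ using monotonicity. With $\kappa=1$ we have $u=(\kappa^2+1)/(2\kappa^2)=1$. So $y^\star=y^\star_{\alpha,R}$ is the positive solution of $f_R(y)=1/\alpha$, where
\[
f_R(y) := \frac{I_R(y,1)}{Z_{R,1}} = \frac{\sqrt{\pi}\, e^{y^2/4}\bigl(\Phi(R\sqrt2 - y/\sqrt2)-\Phi(-R\sqrt2-y/\sqrt2)\bigr)}{\sqrt{2\pi}\,(\Phi(R)-\Phi(-R))}.
\]

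\textbf{Step 1 (monotonicity).} I will show that $y\mapsto f_R(y)$ is continuous and strictly increasing on $[0,\infty)$. The cleanest route is to use the integral representation from the proof of Proposition~\ref{prop:mixture-evals} rather than the closed form:
\[
I_R(y,1)=\int_{-R}^R e^{\lambda y-\lambda^2}\,\d\lambda=\int_{-R}^R \cosh(\lambda y)e^{-\lambda^2}\,\d\lambda,
\]
which is clearly continuous and strictly increasing in $y\ge 0$. We also have $f_R(0)<1$, since $f_R(0)$ is the mixture evaluated at $M=0$, equal to $\int\! e^{-\lambda^2/2}\,\d\rho(\lambda)<1$. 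Together with $f_R(y)\to\infty$ as $y\to\infty$, this shows that for $\alpha\in(0,1)$ the positive root $y^\star_{\alpha,R}$ exists and is unique.

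\textbf{Step 2 (pointwise limit).} For each fixed $y$, as $R\to\infty$ we have $\Phi(R\sqrt2-y/\sqrt2)\to1$, $\Phi(-R\sqrt2-y/\sqrt2)\to0$, and $\Phi(R)-\Phi(-R)\to1$. Hence
\[
f_R(y)\to f_\infty(y):=\frac{\sqrt{\pi}\,e^{y^2/4}}{\sqrt{2\pi}}=\frac{e^{y^2/4}}{\sqrt2}.
\]
The limit $f_\infty$ is strictly increasing on $[0,\infty)$. Solving $f_\infty(y)=1/\alpha$ gives $y^2/4=\log(\sqrt2/\alpha)$, so the limiting root is $y_0=2\sqrt{\log(\sqrt2/\alpha)}$. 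This is positive because $\alpha<1<\sqrt2$.

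\textbf{Step 3 (convergence of roots).} This is the only step needing care, and it is mild. Fix $\varepsilon\in(0,y_0)$. Strict monotonicity of $f_\infty$ gives $f_\infty(y_0-\varepsilon)<1/\alpha<f_\infty(y_0+\varepsilon)$. By Step~2, applied only at the two points $y_0\pm\varepsilon$, there is $R_0$ such that for all $R\ge R_0$,
\[
f_R(y_0-\varepsilon)<1/\alpha<f_R(y_0+\varepsilon).
\]
Since $f_R$ is continuous and strictly increasing (Step~1), its unique root satisfies $y^\star_{\alpha,R}\in(y_0-\varepsilon,y_0+\varepsilon)$. As $\varepsilon$ is arbitrary, $y^\star_{\alpha,R}\to 2\sqrt{\log(\sqrt2/\alpha)}$. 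No uniform convergence is needed; the monotone sandwich at two points suffices.
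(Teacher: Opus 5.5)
Your proof is correct and complete, and it takes a genuinely different route from the paper's.

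\textbf{The paper's route.} The paper takes logarithms in $I_R(y_\alpha^\star,1)=Z_{R,1}/\alpha$ to obtain
\[
\frac{(y_\alpha^\star)^2}{4} = \log\frac{\sqrt2}{\alpha} + \log\bigl(\Phi(R)-\Phi(-R)\bigr) - \log\bigl(\Phi(g_1(y_\alpha^\star))-\Phi(g_2(y_\alpha^\star))\bigr),
\]
with $g_1(y)=\sqrt2R-y/\sqrt2$ and $g_2(y)=-\sqrt2R-y/\sqrt2$. It then argues that both correction terms vanish as $R\to\infty$. The second term is evaluated at the moving point $y_\alpha^\star$, so its vanishing needs a priori control of the root, namely $\sqrt2R-y_\alpha^\star/\sqrt2\to\infty$. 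The paper supplies this only through an informal trichotomy ($y_\alpha^\star$ tends to $\infty$, tends to $0$, or stays bounded), which does not by itself rule out roots growing like $2R$.

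\textbf{What your route does differently.} You avoid this issue altogether. You only evaluate $f_R$ at the two fixed points $y_0\pm\varepsilon$, where pointwise convergence is immediate, and strict monotonicity then traps the root. Boundedness of $y_\alpha^\star$ therefore comes out as a consequence rather than being needed as an input. Your Step 1, via $I_R(y,1)=\int_{-R}^R\cosh(\lambda y)e^{-\lambda^2}\,\mathrm{d}\lambda$ together with $f_R(0)<1$, also proves existence and uniqueness of the positive root, which the paper takes for granted. The same three steps also work verbatim for any $\kappa>0$: with $u=(\kappa^2+1)/(2\kappa^2)$ they give $y_\alpha^\star\to\sqrt{4u\log(\sqrt{1+\kappa^2}/\alpha)}$.

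\textbf{What the paper's route buys.} Its rearrangement gives an explicit identity that isolates the finite-$R$ corrections. For instance, since $\Phi(g_1)-\Phi(g_2)\le 1$, it immediately yields $(y_\alpha^\star)^2\ge 4\log\bigl(\sqrt2(\Phi(R)-\Phi(-R))/\alpha\bigr)$ for every $R$.
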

\begin{proof}
Let $g_1(y) = \sqrt{2}R - y/\sqrt{2}$ and $g_2(y) = -\sqrt{2}R - y/\sqrt{2}$. Then 
\[I_R(y,1) = \sqrt{\pi} \exp\left(\frac{y^2}{4}\right) (\Phi(g_1(y)) - \Phi(g_2(y)). \]
Setting $I_R(y,1 ) = Z_{R,1}/\alpha$ and taking logs, we obtain 
\begin{align*}
   y_\alpha^\star &= 2\sqrt{\log( Z_{R,1}/\alpha) - \log(\sqrt{\pi}) - \log(\Phi(g_1(y_\alpha^\star) - \Phi(g_2(y_\alpha^\star))}  \\
   &= 2\sqrt{\log( \sqrt{2}/\alpha) + \log(\Phi(R)  -\Phi(-R)) - \log(\Phi(g_1(y_\alpha^\star) - \Phi(g_2(y_\alpha^\star))}
\end{align*} 
Now, as $R\to\infty$, $\Phi(R) \to 1$ and $\Phi(-R)\to 0$, so $\log(\Phi(R) - \Phi(-R)) \to 0$. Furthermore, $g_1(y_\alpha^\star) \to 1$ as $R\to\infty$ and $g_2(y_\alpha^\star) \to 0$. To see this, note that either $y_\alpha^\star$ goes to $\infty$, 0, or stays bounded as $R$ grows (it cannot be negative). And in each case we get the desired behavior on $g_1$ and $g_2$. Therefore, $-\log(\Phi(g_1(y_\alpha^\star)) - \Phi(g_2(y_\alpha^\star)))\to 0$ and we conclude that 
\[y_\alpha^\star \to 2\sqrt{\log(\sqrt{2}/\alpha)},\]
completing the argument. 
\end{proof}

\section{Omitted Proofs}
\label{app:proofs}

\begin{lemma}
\label{lem:Ereg_ui}
Let $X_1,\dots,X_n$ be iid with mean $\theta$ and be in the domain of a attraction of a normal law. Let $S_n(\theta)$ and $V_n(\theta)$ be as in~\eqref{eq:Sn-Vn}. 
Fix any $\lambda_{\max}\geq 0$. 
Then: 
\begin{enumerate}
    \item[(i)] The family of random variables $(\sup_{\lambda \in [-\lambda_{\max},\lambda_{\max}]}\exp(\lambda S_n(\theta) / V_n(\theta))$ is uniformly integrable, and 
    \item[(ii)] For a fixed $\eta>0$, the family of random variables $(\sup_{\lambda\in[-\lambda_{\max},\lambda_{\max}]} E_n^\reg(\theta;\lambda,\eta)$, where
\[
E_n^\reg(\theta;\lambda,\eta) = \exp\left(\lambda \frac{\sqrt{n}(\overline{X}_n -\theta)}{\widehat{\sigma}_n + \eta V_n(\theta)/\sqrt{n}} - \frac{\lambda^2}{2}\right),
\]
is uniformly integrable.
\item[(iii)] Let $\lambda_n\to \lambda$ and $\eta_n\to \eta$ almost surely, where $\eta_n>0$ and $\eta\geq 0$. If $|\lambda_n| \lesssim \eta_n$ then the family of random variables $(E_n^\reg(\theta; \lambda_n,\eta_n))$ is uniformly integrable. 
\end{enumerate}

\end{lemma}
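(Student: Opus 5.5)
The plan is to reduce every part to a single uniform integrability fact about the self-normalized sum $T_n := S_n(\theta)/V_n(\theta)$. The starting point is the result of \citet{gine1997student}. For iid data in the domain of attraction of a normal law, they show that $T_n \xrightarrow{d} N(0,1)$, and they also give a sub-Gaussian moment generating function bound: for every $t\in\Re$,
\[
\sup_n \E\exp(t|T_n|)<\infty .
\]
This is the only input I will need.

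\textbf{Part (i).} The first step is to get rid of the supremum over $\lambda$. For $|\lambda|\le\lambda_{\max}$ we have $\lambda T_n \le \lambda_{\max}|T_n|$, so
\[
\sup_{|\lambda|\le\lambda_{\max}} \exp(\lambda T_n) = \exp(\lambda_{\max}|T_n|).
\]
Next, $\exp(2\lambda_{\max}|T_n|)$ has expectation bounded uniformly in $n$ by the Giné--Götze--Mason bound. A family that is bounded in $L^2$ is uniformly integrable, via $\E[Y\ind{Y\ge K}]\le \E[Y^2]/K$. This proves (i).

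\textbf{Part (ii).} Write $D_n:=\widehat\sigma_n+\eta V_n(\theta)/\sqrt n$. The key step is the deterministic bound
\[
\left|\frac{\sqrt n(\Xbar_n-\theta)}{D_n}\right| \le \frac{|S_n(\theta)|}{\eta V_n(\theta)} = \frac{|T_n|}{\eta},
\]
which holds because $\widehat\sigma_n\ge 0$ and $\sqrt n(\Xbar_n-\theta)=S_n(\theta)/\sqrt n$. Using this and dropping the factor $e^{-\lambda^2/2}\le 1$,
\[
\sup_{|\lambda|\le\lambda_{\max}} E_n^\reg(\theta;\lambda,\eta) \le \exp\bigl((\lambda_{\max}/\eta)|T_n|\bigr).
\]
The right-hand side is uniformly integrable by the argument of part (i) applied with $\lambda_{\max}/\eta$ in place of $\lambda_{\max}$. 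Domination by a uniformly integrable family preserves uniform integrability, so (ii) follows.

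\textbf{Part (iii).} Here the parameters $\lambda_n$ and $\eta_n$ are random, which is the main obstacle: they cannot be pulled outside the expectation. The same deterministic bound still gives
\[
E_n^\reg(\theta;\lambda_n,\eta_n) \le \exp\bigl(|\lambda_n|\,|T_n|/\eta_n\bigr).
\]
The assumption $|\lambda_n|\lesssim\eta_n$ means there is a constant $c$ with $|\lambda_n|/\eta_n\le c$ for all $n$, almost surely. So the family is dominated by $\exp(c|T_n|)$, which is uniformly integrable by (i). Note that the limit $\eta=0$ is harmless because only the ratio $|\lambda_n|/\eta_n$ enters. If the implicit constant $c$ were instead random, I would localize on the events $\{c\le M\}$, whose probabilities tend to one, and combine this with the uniform $L^2$ bound. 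With a deterministic constant, however, (iii) follows directly.

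The substantive external input, and the step where care is needed, is the uniform exponential-moment bound on $T_n$ in the full domain-of-attraction setting without a finite variance. I would cite \citet{gine1997student} for it rather than reprove it.
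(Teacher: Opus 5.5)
Your proposal is correct and follows the same route as the paper. Both proofs use the Gin\'e--G\"otze--Mason sub-Gaussian bound on $T_n=S_n(\theta)/V_n(\theta)$, get uniform integrability from $L^2$-boundedness (the paper phrases this via de la Vall\'ee Poussin with $t\mapsto t^2$), and reduce (ii) and (iii) to (i) through the domination $E_n^\reg\le\exp\bigl(|\lambda|\,|T_n|/\eta\bigr)$ with a deterministic bound on $|\lambda_n|/\eta_n$.

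One caveat: your aside about localizing on $\{c\le M\}$ for a random constant would not work as sketched. The uniform $L^2$ bound applies only to $\exp(M|T_n|)$ with $M$ fixed, and nothing controls $E_n$ on $\{c>M\}$. The aside is unnecessary, however, under the deterministic reading that both you and the paper use.
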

\begin{proof}
We begin with (i). Since the $X_i - \theta$ are i.i.d\ with mean zero and in the domain of attraction of a Gaussian, we may apply Theorem 2.5 from \cite{gine1997student} to say that there exists a constant $C>0$ such that:
    \[
    \sup_n \E\left[\exp\left(t \left|\frac{S_n(\theta)}{V_n(\theta)} \right|\right)\right] \le 2\exp\left(Ct^2\right)  \quad \text{for all $t \ge 0$.}
    \]
    In fact, 
    \begin{equation}
        C = \left(1 + \frac{4e}{3}\right)^2 \max\left\{1, \sup_n \E\left|\frac{S_n(\theta)}{V_n(\theta)}\right|\right\}^2. 
    \end{equation}
    We may conclude that $C<\infty$ since $S_n(\theta)/V_n(\theta)\to N(0,1)$ in distribution (by Slutsky's theorem), hence $S_n(\theta)/V_n(\theta)$ is stochastically bounded. Therefore, by Lemma 2.4 in \citet{gine1997student}, $\sup_n \E |S_n(\theta)/V_n(\theta)|<\infty$, implying that $C<\infty$. 
    Note that $C$ is independent of $\lambda$ but not independent of the distribution of $X_i$. 
    This shows that $\exp(\lambda S_n(\theta)/V_n(\theta))$ is integrable for each $n$ and fixed $\lambda$. Moreover, 
    \begin{align*}
        \sup_n \E\sup_\lambda\exp\left\{ \lambda \frac{S_n(\theta)}{V_n(\theta)}\right\}^2 & \le \sup_n \E\exp\left(2\lambda_{\max} \left|\frac{S_n(\theta)}{V_n(\theta)} \right|\right)  
        \le 2\exp\left(4C\lambda_{\max}^2\right) <\infty,    
    \end{align*}
    so by applying de la Vallée Poussin's theorem \citep{valleepoussin1915integrale,meyer1966probability} with the nonnegative increasing convex function $t \mapsto t^2$ on $\Re_+$, we may conclude that $\{\sup_\lambda\exp(\lambda S_n(\theta)/V_n(\theta)\}_n$ is uniformly integrable. This proves (i). 

    For (ii), note that we can write
    \begin{align*}
        \frac{\sqrt{n}(\overline{X}_n -\theta)}{\widehat{\sigma}_n+ \eta V_n(\theta)/\sqrt{n}} &= \frac{\sqrt{n}(\frac{1}{n}\sum_{i\le n} X_i^\theta)}{\sqrt{\sum_{i\leq n} (X_i -\overline{X}_n)^2 / (n-1)} + \eta V_n(\theta)/\sqrt{n}}\\
        &= \frac{\sqrt{n}(\frac{1}{n}\sum_{i\le n} X_i^\theta)}{\sqrt{\sum_{i\leq n} (X_i^\theta -\frac{1}{n}\sum_{i\le n} X_i^\theta)^2 / (n-1)}+ \eta V_n(\theta)/\sqrt{n}}\\
        &= \frac{S_n(\theta)/V_n(\theta)}{\sqrt{(n-(S_n(\theta)/V_n(\theta))^2)/(n-1)}+\eta}.
    \end{align*}
    Then:
    \begin{align*}
        E_n^\reg(\theta;\lambda,\eta) &= \exp\left(\lambda \frac{\sqrt{n}(\overline{X}_n -\theta)}{\widehat{\sigma}_n+ \eta V_n(\theta)/\sqrt{n}} - \frac{\lambda^2}{2}\right)\\
        &=\exp\left(\lambda \frac{\sqrt{n}(\overline{X}_n -\theta)}{\widehat{\sigma}_n+ \eta V_n(\theta)/\sqrt{n}}\right) \exp\left(-\frac{\lambda^2}{2}\right)\\
        &=\exp\left(\lambda \frac{S_n(\theta)/V_n(\theta)}{\sqrt{(n-(S_n(\theta)/V_n(\theta))^2)/(n-1)}+\eta}\right) \exp\left(-\frac{\lambda^2}{2}\right)\\
        &\le\exp\left(\frac{|\lambda|}{\eta} \left|\frac{S_n(\theta)}{V_n(\theta)} \right|\right) \exp\left(-\frac{\lambda^2}{2}\right).
    \end{align*}
    Therefore, 
    \begin{align*}
        \sup_\lambda E_n^\reg(\theta; \lambda,\eta) &\leq \exp\left(\frac{\lambda_{\max}}{\eta} \left|\frac{S_n(\theta)}{V_n(\theta)} \right|\right).
    \end{align*}
    We may now appeal to part (i) with $\lambda_{\max} \gets \lambda_{\max}/\eta$ to complete the proof of (ii). For (iii) let $\infty > M \geq \sup_n |\lambda_n|/\eta_n$, ($M$ is guaranteed to be finite since $|\lambda_n|/\eta_n \lesssim 1$).    Then, proceeding as above, 
    \begin{align*}
    \sup_n \E_P[E_n^\reg(\theta;\lambda_n,\eta_n)] &\leq \sup_n \E_P\left[\exp\left(\frac{|\lambda_n|}{\eta_n}\left|\frac{S_n(\theta}{V_n(\theta)}\right|\right)\right]\exp\left(-\frac{\lambda_n^2}{2}\right) \\
    & \leq \sup_n 
     \E_P\left[\exp\left(M\left|\frac{S_n(\theta}{V_n(\theta)}\right|\right)\right]  \\ 
     &\leq 2\exp( CM^2). 
    \end{align*}
    Then we can apply de la Vall\'ee Poussin's theorem as above, which proves (iii). 
\end{proof}

\subsection{Proof of Proposition~\ref{prop:cs-via-evalue}}
\label{proof:cs-via-evalue}

First, suppose that $\calH_n(\alpha) = \{ \theta: E_n(\theta) < 1/\alpha\}$ and that $(E_n(\theta^\star))$ is an asymptotic e-variable for $\theta^\star$. Notice that for any $\alpha>0$, $\ind{E_n(\theta^\star)\geq 1/\alpha}/\alpha \leq E_n(\theta^\star)$ by case analysis on the indicator. Further, by taking $\alpha =1/E_n(\theta^\star)$, we see that $\ind{E_n(\theta^\star)\geq 1/\alpha}/\alpha \geq E_n(\theta^\star)$. Therefore, 
\begin{equation}
    \sup_{\alpha >0} \frac{\ind{\theta^\star \notin \calH_n(\alpha)}}{\alpha} = \sup_{\alpha >0} \frac{\ind{E_n(\theta^\star) \geq 1/\alpha}}{\alpha} = E_n(\theta^\star). 
\end{equation}
Thus, taking expected values, the fact that $(E_n(\theta^\star))$ is a (uniform) asymptotic e-variable implies that $(\calH_n)$ is a (uniform) \aphci. 

Conversely, let $\calH_n(\cdot)$ be an \aphci for $\theta^\star$. For any $\theta \in \Theta$, define
    \begin{equation}
        E_n(\theta) := \sup \left \{ \frac{1}{\alpha} : \alpha > 0 \text{ and } \theta \notin \calH_n(\alpha) \right \} = \sup_{\alpha > 0} \frac{\1 \{ \theta \notin \calH_n(\alpha) \}}{\alpha}.
    \end{equation}
    By the property of $(\calH_n)$ forming an  \aphci for $\theta^\star$, it holds that $E_n(\theta^\star)$ is an asymptotic e-variable. Likewise in the distribution-uniform setting. 
    
    We will now show that for any $\alpha \in (0, 1)$, $\calH_n(\alpha) = \{ \theta : E_n(\theta) < 1/\alpha \}$.
    Indeed, fix any $\theta_0 \in \Re$ and $\alpha_0 \in (0, 1)$.
    Since $\calH_n(\cdot)$ is closed from below, the supremum in the definition of $E_n({\theta_0})$ is attained, i.e.
    \begin{equation}
        E_n({\theta_0}) = \frac{1}{\alpha(\theta_0)}\quad \text{where}\quad \alpha(\theta_0) = \min\{ \alpha > 0 : \1 \{ \theta_0 \notin \calH_n(\alpha) \} \},
    \end{equation}
    taking the above minimum to be $\infty$ if the set is empty.
    \paragraph{Case I: $\alpha_0 = \alpha(\theta_0)$.} Clearly, $E_n({\theta_0}) \geq 1/\alpha_0$ and $\theta_0 \notin \calH_n(\alpha_0)$.
    \paragraph{Case II: $\alpha_0 < \alpha(\theta_0).$} By definition of $\alpha(\theta_0)$ being the minimum such $\alpha$ for which $\theta_0$ is excluded from $\calH_n(\alpha)$, we have that $\theta_0 \in \calH_n(\alpha_0)$. Moreover, we have that by definition, $E_n({\theta_0}) = 1/\alpha(\theta_0) < 1/\alpha_0$.
    \paragraph{Case III: $\alpha_0 > \alpha(\theta_0).$} By monotonicity of $\calH_n(\cdot)$, it holds that $\theta_0 \notin \calH_n(\alpha_0)$. Once again by definition of $E_n({\theta_0})$, we have $E_n({\theta_0}) = 1/\alpha(\theta_0) > 1/\alpha_0$. 

    In all three cases, we have $E_n({\theta_0}) \geq 1/\alpha_0$ if and only if $\theta_0 \notin \calH_n(\alpha_0)$. Since $\theta_0$ and $\alpha_0 \in (0, 1)$ were arbitrary, this completes the proof.

\subsection{Proof of Theorem~\ref{thm:iwr-asymp-evar}}
\label{proof:iwr-asymp-evar}

We handle the pointwise case and the distribution-uniform case separately. The latter is significantly more involved than the former. 

\subsubsection{Pointwise case.}
Let $(\lambda_n)$ be a bounded sequence which converges to $\lambda\in\Re$ almost surely. 
By Lemma~\ref{lem:Ereg_ui} (i), it follows that $(E_n^\iwr(\theta);\lambda_n))_n$ is uniformly integrable. Moreover, 
\[\frac{S_n(\theta)}{V_n(\theta)} = \frac{\frac{1}{\sqrt{n}} \sum_{i\leq n} (X_i - \theta)}{\frac{1}{\sqrt{n}}\sqrt{\sum_i (X_i - \theta)^2}} \xrightarrow{d} N(0,1),\]
by \citet[Theorem 3.3]{gine1997student}. 
Therefore, by the continuous mapping theorem and a second application of Slutsky's theorem, $E_n^\iwr(\theta;\lambda_n) \xrightarrow{d} \exp(\lambda Z - \lambda^2/2)$ for $Z\sim N(0,1)$. Since convergence in distribution together with uniform integrability imply convergence in expectation \citep[Theorem 25.12]{billingsley1995proba}, we have $\lim_{n\to\infty} \E[E_n^\iwr(\theta;\lambda_n)] = \E[\exp(\lambda Z - \lambda^2/2)]=1$, completing the argument.

\subsubsection{Distribution-uniform case.}
Define the nonnegative function $g(T;\lambda) = \exp(\lambda T - \lambda^2/2)$. Let $Z\sim N(0,1)=R$ be a standard normal random variable, so that $\E_R[g(Z)]=1$. 
    For a fixed $\lambda$ and $T_n = S_n(\theta) / V_n(\theta)$, we will show that $g(T_n;\lambda)$ obeys the conditions of Proposition~\ref{prop:uniform-billingsley}. 

    \paragraph{Step I.} We claim that $g(T_n;\lambda)$ satisfies 
    \begin{equation}
    \label{eq:pf-du-iwr-1}
      \lim_{K\to\infty} \limsup_n \sup_P \E_P[g(T_n;\lambda) \ind{g(T_n;\lambda)\geq K}.  
    \end{equation}
    Observe that $g(T_n;\lambda)\ind{g(T_n;\lambda) \geq K}\leq \exp(\lambda T_n)\ind{\exp(\lambda T_n)\geq K}$ (since $\exp(-\lambda^2/2)\leq 1$). Further, on $\{ \exp(\lambda T_n)\geq K\}$, for any $U>\lambda$ we have $\exp(T_n(U - \lambda)) \geq K^{(U-\lambda)/\lambda)}$, i.e., 
    \[1\leq \exp(T_n ( U -\lambda)) K^{-(U-\lambda)/\lambda}.\]
    Multiplying both sides by $\exp(\lambda T_n)\ind{\exp(\lambda T_n)\geq K}$, we obtain that 
    \begin{equation}
    \label{eq:pf-du-iwr-2}
        g(T_n;\lambda)\ind{g(T_n;\lambda)\geq K} \leq \exp(\lambda T_n) \ind{\exp(\lambda T_n)\geq K} \leq \frac{\exp( U T_n)}{K^{(U-\lambda)/\lambda}}. 
    \end{equation}
    Therefore, in order to show that~\eqref{eq:pf-du-iwr-1} holds, it suffices to show that 
    \begin{equation}
        \limsup_n \sup_P \E_P[\exp(U T_n)] <\infty. 
    \end{equation}
    As we did in the pointwise case, we may apply Theorem 2.5 from \cite{gine1997student} to conclude that there exists a constant $C>0$ such that:
    \[
    \E_P[\exp(U T_n)] \leq 2\exp\left(C_PU^2\right), 
    \]
    where 
    \begin{equation}
        C_P = \left(1 + \frac{4e}{3}\right)^2 \max\left\{1, \max_{\ell\leq n} \E_P|T_\ell|\right\}^2. 
    \end{equation}
    We therefore need to bound $\E_P[|T_\ell|]$ uniformly over all $P$. To do so, we appeal to a Berry-Esseen bound for self-normalized sums (see \citet{bentkus1996berry} or \citet[Chap.\ 5, Theorem 5.9]{de2009self}), which states that 
    \begin{equation}
        \sup_{z\in\Re}\left|P(T_n \leq z) - \Phi(z)\right| \leq \frac{25\E_P|X_i - \theta|^3}{\sqrt{n}\sigma_P^3}\leq \frac{25}{\sqrt{n}} M,
    \end{equation}
    for some finite $M$ by assumption of uniformly bounded skew, 
    and where $\Phi$ is the cdf is the standard normal. 
    From here, note that $|T_n|\leq \sqrt{n}$ by Cauchy-Schwarz. Hence 
    \begin{align*}
        \E_P|T_n| &= \int_0^{\sqrt{n}} P(|T_n|>r)\d r = \int_0^{\sqrt{n}} P(T_n > r) + P(T_n < -r) \d r \\
        &= \int_0^{\sqrt{n}} P(T_n > r)\d r + \int_{-\sqrt{n}}^0 P(T_n < r)\d r \\
        &\leq \int_0^{\sqrt{n}} P(Z > r)\d r + \int_{-\sqrt{n}}^0 P(Z < r)\d r + 50 M\\
        & \leq \int_0^{\infty}P(|Z|\geq r)\d r + 50M < 1 + 50M < \infty.  
    \end{align*}
    Therefore, we can upper bound the distribution-dependent constant $C_P$ by the distribution independent constant $C$: 
    \begin{equation*}
        C_P \leq  \left(1 + \frac{4e}{3}\right)^2 (1 + 50M)^2 =: C, 
    \end{equation*}
    in which case 
    \begin{equation*}
        \sup_P \E_P[\exp(U T_n)] \leq 2 \exp(C U^2). 
    \end{equation*}
    Returning to~\eqref{eq:pf-du-iwr-2}, we have 
    \begin{align*}
        \limsup_n \sup_P \E_P[g(T_n;\lambda)\ind{g(T_n;\lambda)\geq K}] &\leq \limsup_n \sup_P \E_P\left[\frac{\exp(UT_n)}{K^{(U-\lambda)/\lambda}}\right]  \\ 
        &\leq \frac{2\exp(CU^2)}{K^{(U-\lambda)/\lambda}}\xrightarrow{K\to\infty} 0,
    \end{align*}
    as needed. 

\paragraph{Step II.} Next we want to show that for any $K>0$, 
\begin{equation}
    \limsup_n \sup_P \left|\E_P[g(T_n)\wedge K] - \E_R[g(Z) \wedge K]\right| = 0,
\end{equation}
where $Z\sim N(0,1)=R$. Let $h(x) = g(x)\wedge K$. Using integration by parts for Riemann-Stieltjes integrals, write 
\[\E_P[h(T_n)] = \int_{-\infty}^\infty h(x) \d P (x)= h(x) P(x)\bigg|_{-\infty}^\infty - \int_{-\infty}^\infty P(x) \d h(x) = K - \int_{-\infty}^\infty P(x) \d h(x). \]
Using similar logic on $\E_R[h(Z)]$, we have 
\begin{align*}
    \left|\E_P[h(T_n)] - \E_R[h(Z)]\right| &= \left|\int_{-\infty}^\infty R(x) \d h(x) - \int_{-\infty}^\infty P(x) \d h(x) \right| \\
    &= \left|\int_{-\infty}^\infty R(x) - P(x) \d  h(x)\right|  \\
    &\leq \sup_{z\in\Re} |R(x) - P(x)| \int_{-\infty}^\infty | \d h(x)|,  
\end{align*}
where the final integral $\int |\d h(x)|$ is the total variation of $h$. Since $\lim_{x\downarrow-\infty} h(x) = 0$ and $\lim_{x\uparrow \infty} h(x) = K$, the total variation of $h$ is $K$. 
Hence, using the Berry-Esseen bound above we have 
\begin{align*}
    \sup_P \left|\E_P[h(T_n)] - \E_R[h(Z)]\right|\leq \frac{25KM}{\sqrt{n}} \xrightarrow{n\to\infty}0,
\end{align*}
which is the desired result. 

\paragraph{Step III.} The previous two steps, combined with Proposition~\ref{prop:uniform-billingsley}, have shown that 
\begin{equation}
\label{eq:pf-du-iwr-3}
  \limsup_{n\to\infty} \sup_P |\E_P[g(T_n;\lambda) - 1|=0.  
\end{equation}
It remains to allow $\lambda$ to vary with $n$. By the triangle inequality, write 
\begin{align}
\label{eq:pf-du-iwr-4}
    |\E_P[g(T_n;\lambda_n)] - 1| \leq |\E_P[g(T_n;\lambda_n) - g(T_n;\lambda)]| + |\E_P[g(T_n;\lambda)] - \E_R[g(Z)]|. 
\end{align}
By~\eqref{eq:pf-du-iwr-2} we have control of the second difference. For the first, we apply the mean value theorem to $g$ to conclude that there exists some $\widetilde{\lambda}$ such that 
\begin{align*}
    g(T_n;\lambda_n) - g(T_n;\lambda) &\leq |\lambda_n - \lambda| \frac{\partial g(T_n;\widetilde{\lambda})}{\partial \lambda} = |\lambda_n - \lambda||T_n - \widetilde{\lambda}| g(T_n;\widetilde{\lambda}) \\ 
    &\leq |\lambda_n - \lambda| |T_n + U|\exp( U T_n) 
    \leq U |\lambda_n - \lambda| \exp((U + 1/U)T_n),
\end{align*}
where we've used that $1 + x\leq e^x$ for $x\geq 0$. Therefore, using the results from step I, we have 
\begin{align*}
    \sup_P|\E_P[g(T_n;\lambda_n) - g(T_n;\lambda)]| &\leq U|\lambda_n - \lambda| \sup_P \E_P\exp((U + 1/U)T_n) \\
    &\leq 2U|\lambda_n - \lambda| \exp(C (U + 1/U)^2) 
\end{align*}
which converges to 0 as $n\to\infty$ since $\lambda_n \to \lambda$. Therefore, combining~\eqref{eq:pf-du-iwr-3} and~\eqref{eq:pf-du-iwr-4}, we have 
\begin{equation*}
    \limsup_{n\to\infty} \sup_P |\E_P[g(T_n;\lambda)] - 1| = 0,
\end{equation*}
which completes the proof for $(E_n^\iwr)$.

\subsection{Proof of Theorem~\ref{thm:iwr-aphci}}
\label{proof:iwr-aphci}

Throughout the proof, let $A_n = A_{n,\lambda,\alpha}$. 
Fix $\lambda>0$. We will apply Proposition~\ref{prop:cs-via-compound-eval} with $K=2$ and $E_n^\iwr(\theta; \lambda)$ and $E_n^\iwr(\theta; -\lambda)$. That is, 
\[\calH_n^\iwr(\alpha) = \{ \theta: E_n^\iwr(\theta; \lambda) <2/\alpha \text{ and }E_n^\iwr(\theta; -\lambda) < 2/\alpha\}.\]
Let us first consider $\theta$ such that $E_n^\iwr(\theta;\lambda)<2/\alpha$, which we can rewrite as 
\begin{equation}
\label{eq:pf-aphcis-1}
    S_n(\theta) < \sqrt{n} V_n(\theta) A_n.
\end{equation}
Since $V_n(\theta),A_n\geq 0$, this equation is satisfied whenever $S_n(\theta) < 0$, i.e., whenever $\theta>\Xbar_n$. Thus part of the solution set to~\eqref{eq:pf-aphcis-1} is $R_1 = (\Xbar_n,\infty)$. To find the remaining values of $\theta$ which satisfy~\eqref{eq:pf-aphcis-1}, assume that $S_n(\theta)\geq 0$. Then we may square both sides to obtain 
\begin{equation}
\label{eq:pf-aphcis-2}
 S_n^2(\theta) < n V_n^2(\theta)A_n^2   
\end{equation}
Let $y = \Xbar_n - \theta$ and notice that $S_n(\theta) = ny$ and $V_n(\theta) = \sqrt{\sigmahat_n^2(n-1) + ny^2}$. Hence~\eqref{eq:pf-aphcis-2} can be rewritten as $n^2y^2 < n(\sigmahat_n^2 (n-1) + ny^2) A_n^2$. If $1 - A_n^2>0$, this becomes 
\begin{equation}
   y < \sqrt{\frac{(n-1)\sigmahat_n^2 A_n^2}{n(1 - A_n^2)}} = \frac{\sigmahat_n A_n }{(1 - A_n^2)^{1/2}}\sqrt{\frac{n-1}{n}}=:B_n.
\end{equation}
Which gives the interval $R_2 = (\Xbar_n - B_n, \infty)\supset R_1$ as our solution set. If $1 - A_n^2\leq 0$, then~\eqref{eq:pf-aphcis-2} is solved by any $S_n(\theta)>0$, i.e., $\theta < \Xbar_n$, in which case the resulting confidence interval is the entire real line. To ensure that $1 -A_n^2 >0$ it suffices that 
\[ n\geq \left(\frac{\log(2/\alpha)}{\lambda} + \frac{\lambda}{2}\right)^2,\]
which gives the constraint on $n$ in the statement of the theorem. Solving for $\theta$ in $E_n^\iwr(\theta;-\lambda) < 2/\alpha$ in similar fashion gives the interval $R_2'= (-\infty, \Xbar_n + B_n)$. Taking the intersection $R_2\cap R_2'$ gives the result.

\subsection{Proof of Proposition~\ref{prop:mixture-is-valid}}
\label{proof:mixture-is-valid}

This result follows from the more general version stated in Appendix~\ref{app:generalized-mixture}. However, let us prove it directly for clarity. 

Let $\eps>0$ be arbitrary. By assumption of uniform integrability, there exists some $T_\eps>0$ such that 
\[\sup_n \E_P\left[\sup_\lambda E_n(\lambda) \ind{\sup_\lambda E_n(\lambda) \geq T_\eps}\right]\leq \eps,\]
where $\lambda$ ranges over all $\lambda\in\Lambda$. 
Write 
\begin{align*}
    E_n(\pi) &= \int E_n(\lambda) \ind{E_n(\lambda)< T_\eps} \d\pi(\lambda) + \int E_n(\lambda) \ind{E_n(\lambda)\geq  T_\eps} \d\pi(\lambda).
\end{align*}
By Tonelli's theorem, 
\begin{align*}
    \E_P[E_n(\pi)] &= \int \E_P[E_n(\lambda) \ind{E_n(\lambda)< T_\eps}] \d\pi(\lambda) + \int \E_P[E_n(\lambda) \ind{E_n(\lambda)\geq  T_\eps}]\d\pi(\lambda) \\ 
    &\leq \int \E_P[E_n(\lambda) \ind{E_n(\lambda)< T_\eps}] \d\pi(\lambda) + \eps. 
\end{align*}
Since $E_n(\lambda) \ind{E_n(\lambda)\geq  T_\eps}$ is bounded, reverse Fatou's lemma gives 
\begin{align*}
  \limsup_n \E_P[E_n(\lambda)]  &\leq \int \limsup_n \E_P[E_n(\lambda) \ind{E_n(\lambda)< T_\eps}] \d\pi(\lambda) + \eps \\ 
  &\leq \int \limsup_n \E_P[E_n(\lambda)] \d\pi(\lambda) + \eps \leq 1 + \eps. 
\end{align*}
Since $\eps$ was arbitrary, this gives the desired result in the pointwise case. In the distribution-uniform case the argument is similar. We have the guarantee
\[\sup_n \sup_P \E_P\left[\sup_\lambda E_n(\lambda) \ind{\sup_\lambda E_n(\lambda) \geq T_\eps}\right]\leq \eps,\]
and then after applying Tonelli's theorem we take the supremum over $P$ to obtain 
\begin{align*}
    \sup_P\E_P[E_n(\pi)] &= \int \sup_P \E_P[E_n(\lambda) \ind{E_n(\lambda)< T_\eps}] \d\pi(\lambda) +\eps. 
\end{align*}
From there the argument is nearly identical, mutatis mutandis.

\subsection{Proof of Theorem~\ref{thm:iwr-mixture-aphci}}
\label{proof:iwr-mixture-aphci}
Before we begin, observe that we can write 
\[S_n(\theta) = n\Delta \text{~ and ~} V_n^2(\theta) = ns_n^2 + n\Delta^2,\]
where $\Delta = (\Xbar_n - \theta)$ and $s_n = n^{-1}\sum_{i=1}^n (X_i - \Xbar_n)^2 = (n-1)\sigmahat_n^2$ is the biased sample variance. 

Let us start with $E_n^{\mix,\iwr}$. Let $u = (\kappa^2+1)/(2\kappa^2)$. Since $I_R(y,u)$ is increasing in $|y|$, we are searching for those $\theta$ such that $|S_n(\theta)/V_n(\theta)|<y^\star$ where $I_R(y^\star,u) < Z_{R,\kappa}\alpha$. Rewriting $|S_n(\theta)/V_n(\theta)|<y^\star$ gives 
\[|n\Delta|<y^\star\sqrt{n s_n^2 + n\Delta^2},\]
and squaring both sides leads to 
\[|\Delta| < \frac{y^\star s_n}{\sqrt{n - (y^\star)^2}},\]
which is precisely the interval defined by~\eqref{eq:mix-iwr-aphci}.

\subsection{Proof of Theorem~\ref{thm:rws-evariable}}
\label{proof:rws-variable}

We will use the following lemma from \citet[Theorem 2.3]{ruf2025concentration} (instantiated with $q=1$). 
\begin{lemma}[A nonasymptotic SLLN]\label{lemma:nonasymp-slln}
  Let $Z_1, Z_2, \dots$ be iid with finite mean $\theta$ and law $P$. For $x\geq 0$, define $U(x) = \E_P[|Z_1-\theta|\ind{|Z_1 - \theta|\geq x}]$. Then, for all $\eps \in (0 ,6.13]$ and $\gamma\in(0,1/2)$,
  \begin{equation}
    P \left [ \sup_{k \geq m} \left \lvert \frac{1}{k} \sum_{i=1}^k Z_i - \theta \right \rvert \geq \eps \right ] \leq c_\gamma \exp \{ -m^{1-\gamma} \} + \frac{451}{\eps^2} U\left(\frac{\eps m^\gamma}{6.13}\right),
  \end{equation}
  where $c_\gamma = (3-4\gamma)/(2 - 4\gamma)$. 
\end{lemma}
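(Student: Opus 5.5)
The plan is a truncation argument. I would split each centred increment $Z_i-\theta$ into a bounded piece, which gives the exponential term $c_\gamma\exp\{-m^{1-\gamma}\}$, and a heavy-tailed remainder, which gives the term $451\,\eps^{-2}U(\eps m^\gamma/6.13)$. Each piece gets its own time-uniform maximal inequality over $k\ge m$. The exact constants $451$, $6.13$ and $c_\gamma$ come from the original source, \citet[Theorem 2.3]{ruf2025concentration}, which the paper simply imports; here I only aim to recover the structure.

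\textbf{Decomposition.} Write $Y_i=Z_i-\theta$ and fix the truncation level $x:=\eps m^\gamma/6.13$. Set
\[
B_i := Y_i\ind{|Y_i|<x}-\E_P[Y_1\ind{|Y_1|<x}], \qquad W_i := Y_i\ind{|Y_i|\ge x}-\E_P[Y_1\ind{|Y_1|\ge x}],
\]
so that $\frac1k\sum_{i\le k}Y_i=\frac1k\sum_{i\le k}B_i+\frac1k\sum_{i\le k}W_i$. A union bound then reduces the claim to bounding
\[
P\Big(\sup_{k\ge m}\Big|\tfrac1k\textstyle\sum_{i\le k} B_i\Big|\ge \eps/2\Big) \quad\text{and}\quad P\Big(\sup_{k\ge m}\Big|\tfrac1k\textstyle\sum_{i\le k} W_i\Big|\ge \eps/2\Big).
\]
The centring of $W_i$ costs at most $U(x)$ in the mean, which is absorbed because the target bound is trivial unless $U(x)\lesssim\eps^2\le 6.13\,\eps$.

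\textbf{Bounded part.} The $B_i$ are iid, mean zero, and bounded by $2x$. I would peel $k\ge m$ into dyadic blocks $[2^jm,2^{j+1}m)$. On each block, Doob's maximal inequality applied to the exponential martingale $\exp(\lambda\sum_{i\le k}B_i-k\lambda^2x^2/2)$ (Hoeffding's lemma) gives
\[
P\Big(\max_{k<2^{j+1}m}\Big|\textstyle\sum_{i\le k}B_i\Big|\ge \eps 2^{j}m/2\Big)\le 2\exp\{-c\,\eps^2 2^{j}m/x^2\}.
\]
With $x=\eps m^\gamma/6.13$ the exponent is of order $2^jm^{1-2\gamma}$. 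This already shows that a single fixed truncation level gives a weaker rate than $m^{1-\gamma}$. The remedy, which I expect to be the main obstacle, is to replace Hoeffding by Bernstein and bound the variance $\E[B_1^2]\le x\,\E|Y_1|$. Alternatively, use an index-dependent level $x_i\asymp\eps i^{\gamma}$, so that each block $j$ has exponent $\gtrsim(2^jm)^{1-\gamma}$. Summing the geometric-type series over $j$ then produces a constant of the form $c_\gamma=(3-4\gamma)/(2-4\gamma)$, which blows up as $\gamma\uparrow1/2$. That is consistent with the stated form of $c_\gamma$, and I would choose the truncation so that no moment of $Z_1$ beyond the mean appears in the exponential term.

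\textbf{Tail part.} The partial sums $\sum_{i\le k}W_i$ form a mean-zero random walk. A Hájek--Rényi/Kolmogorov-type $L^2$ maximal inequality for the averages over $k\ge m$ bounds the second probability by
\[
\frac{C}{\eps^2}\sum_{i}\frac{\E[W_i^2\ind{\cdot}]}{i^2}.
\]
Since only first moments are available, I would not use $\E W^2$ directly. Instead I would truncate once more from above, at level $\asymp\eps i$: the events $\{|Y_i|\ge\eps i\}$ are controlled by a union bound, whose sum is at most $\E[|Y_1|\ind{|Y_1|\ge x}]/\eps$ by the layer-cake formula. On the middle range, the estimate $\sum_i i^{-2}\E[Y^2\ind{x\le|Y|<\eps i}]\lesssim \eps^{-1}\E[|Y|\ind{|Y|\ge x}]$ turns the $L^2$ bound into $C\eps^{-2}U(x)$. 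The step needing the most care is this index-dependent truncation and its interaction with the centring terms; after that, adding the two pieces gives the stated inequality.
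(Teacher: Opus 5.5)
\textbf{The failing step.} The step that fails is your planned upgrade of the exponent from $m^{1-2\gamma}$ to $m^{1-\gamma}$. No version of it can succeed, because the bound as printed is false.

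Your fixed-level Hoeffding--peeling computation is correct: block $j$ has exponent of order $2^jm^{1-2\gamma}$. Neither proposed remedy improves on this.
\begin{itemize}
\item With $x_i\asymp\eps i^\gamma$, the summands in the block $[2^jm,2^{j+1}m)$ have size $\asymp\eps(2^jm)^\gamma$. The Hoeffding exponent is then $\asymp\eps^2 2^jm/(\eps(2^jm)^\gamma)^2=(2^jm)^{1-2\gamma}$, not $(2^jm)^{1-\gamma}$; that is an arithmetic slip.
\item Bernstein with $\E[B_1^2]\le x\,\E|Y_1|$ gives an exponent of order $m^{1-\gamma}\cdot\eps/(\E|Y_1|+\eps)$. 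This depends on the scale of $P$, and the dependence cannot be removed.
\end{itemize}

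\textbf{Counterexample to the printed bound.} Take $Z_i=\theta+sR_i$ with $R_i$ iid Rademacher and $s<x=\eps m^\gamma/6.13$. Then $U(x)=0$, so the printed lemma would force $P\bigl(\sup_{k\ge m}|\tfrac1k\sum_{i\le k}Z_i-\theta|\ge\eps\bigr)\le c_\gamma e^{-m^{1-\gamma}}$. But the left side is at least $P(|\sum_{i\le m}R_i|\ge\eps m/s)$. For $s$ near $x$, moderate deviations make this $\exp\{-\Theta(m^{1-2\gamma})\}$, with constant about $6.13^2/2\approx18.8$. Concretely, take $\gamma=0.49$, $\eps=1$, $m=10^4$, $s=14$ (so $x\approx14.9$). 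The left side is about $10^{-12}$, while the right side is $26e^{-109.6}\approx10^{-46}$.

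\textbf{The intended statement, and how your plan fares.} The paper proves nothing here; it imports \citet[Theorem~2.3]{ruf2025concentration} with $q=1$. The printed exponent is almost certainly a typo for $m^{1-2\gamma}$, for two reasons:
\begin{itemize}
\item That is how the paper itself applies the lemma in the proof of Theorem~\ref{thm:rws-evariable}, where the terms appear as $2\exp(-n^{1-2\gamma})$.
\item The constant $c_\gamma=1+\frac{1}{2(1-2\gamma)}$ blows up exactly where $m^{1-2\gamma}$ stops growing, whereas $m^{1-\gamma}$ is harmless at $\gamma=1/2$.
\end{itemize}

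For the corrected statement, your plan is a valid self-contained alternative up to constants, once you drop the upgrade.
\begin{itemize}
\item The fixed-level peeling already gives $\sum_j2\exp(-c\,2^jm^{1-2\gamma})\le Ce^{-cm^{1-2\gamma}}$ with absolute $C,c$, since $m^{1-2\gamma}\ge1$.
\item In the tail part, your second truncation at level $\asymp\eps i$ costs $O(U(x)/\eps)$ in the union bound.
\item The middle-range sum is $O(\eps\,U(x))$, not $O(\eps^{-1}U(x))$ as you wrote. So H\'ajek--R\'enyi returns $O(U(x)/\eps)$, which is $O(U(x)/\eps^2)$ because $\eps\le6.13$.
\item The centring shifts are at most $U(x)$ each and are absorbed exactly as you describe.
\end{itemize}
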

Now let us proceed to the proof. Recall the definition of $U_n^\otimes$ from~\eqref{eq:howard-evar} and note that $\E[U_n^\otimes] = n\sigma^2$. 
Further note that 
\[\frac{1}{n}\sum_{i=1}^n (X_i - \theta)^2 = \frac{1}{n}\sum_{i=1}^n (X_i - \Xbar_n)^2 + (\Xbar_n-\theta)^2,\]
for all $n\geq 1$. Hence 
\begin{align*}
  \shat_n^2 - \frac{U_n^\otimes}{n} &= \frac{2}{3}\left(\frac{1}{n}\sum_{i=1}^n (X_i -\theta)^2 - \sigma^2\right) - (\Xbar_n-\theta)^2. 
\end{align*}
Taking absolute values, a union bound gives 
\begin{equation*}
    P\left(\bigg|\shat_n^2 - \frac{U_n^\otimes}{n}\bigg|\geq \eps\right) \leq P\left(\frac{2}{3}\left|\frac{1}{n}\sum_{i=1}^n (X_i -\theta)^2-\sigma^2\right|\geq \frac{\eps}{2}\right) + P\left( (\Xbar_n-\theta)^2\geq \frac{\eps}{2}\right). 
\end{equation*}
We may apply Lemma~\ref{lemma:nonasymp-slln} to both terms on the right hand side of the above display. This gives 
\begin{align*}
    &P\left(\frac{2}{3}\left|\frac{1}{n}\sum_{i=1}^n (X_i -\theta)^2-\sigma^2\right|\geq \frac{\eps}{2}\right) \\
    &= P\left(\left|\frac{1}{n}\sum_{i=1}^n (X_i -\theta)^2-\sigma^2\right|\geq \frac{3\eps}{4}\right) \\
    &\leq 2\exp(-n^{1-2\gamma}) + \frac{(4/3)^2\cdot 451}{\eps^2} \E_P\left[|X_1 - \theta|^2\ind{|X_1 - \theta|^2 \geq \frac{3\eps n^{\gamma}}{4}}\right] \\
    &\leq 2\exp(-n^{1-2\gamma}) + \frac{A_\delta}{\eps^{2+\delta/2} n^{\delta\gamma/2}} \E_P[|X_1-\theta|^{2+\delta}], 
\end{align*}
where  $A_\delta = (4/3)^2 \cdot 451 \cdot (4\cdot 38/3)^{\delta/2}$ and 
the final inequality uses that $\E[Z^2 \ind{Z\geq u}] \leq u^{-\delta} \E[Z^{2+\delta}]$ for any nonnegative random variable $Z$ (on $\{Z\geq u\}$ we have $Z^{2+\delta} = Z^2 Z^\delta \geq Z^2 u^\delta$). Note that here we're using that $\delta>0$. Further, 
\begin{align*}
   &P\left( (\Xbar_n-\theta)^2\geq \frac{\eps}{2}\right) \\
   &= P\left( |\Xbar_n-\theta|\geq \sqrt{\frac{\eps}{2}}\right) \\
   &\leq 2\exp(-n^{1-2\gamma}) + \frac{902}{\eps}\E_P\left[|X_1 - \theta|\ind{|X_1 - \theta|\geq \frac{\sqrt{\eps/2} \cdot n^{\gamma}}{38}}\right] \\ 
   &\leq 2\exp(-n^{1-2\gamma}) + \frac{902}{\eps}\left(\frac{38\sqrt{2}}{\sqrt{\eps} n^{\gamma}}\right)^{1+\delta}\E_P\left[|X_1 - \theta|^{2+\delta}\right].
\end{align*}
Therefore, combining the above two displays, 
\begin{align*}
    P\left(\bigg|\shat_n^2 - \frac{U_n^\otimes}{n}\bigg|\geq \eps\right) &\leq 4\exp(-n^{1-2\gamma}) + \left(\frac{A_{\delta}}{n^{\delta\gamma/2} \eps^{2+\delta/2}} + \frac{B_\delta}{\eps^{(3 + \delta)/2} n^{(1 + \delta)\gamma}}\right)\E_P|X_1 - \theta|^{2+\delta},
\end{align*}
for constants $A_\delta,B_\delta$. 
Define the ``good'' event, 
\begin{equation*}
    A_n := \left\{ \bigg|\shat_n^2 - \frac{U_n^\otimes}{n}\bigg| \leq \eps_n\right\}. 
\end{equation*}
On $A_n$ we have that $n(\shat_n^2 +\eps_n) \geq U_n^\otimes$, hence 
\begin{align*}
\E_P[E_n^\rws(\theta; \eps_n,T_n)\ind{A_n}] &\leq \E_P\left[\left(\int E_n^\star(\lambda) \d F\wedge T_n \right)\ind{A_n}\right] \\
&\leq \E_P\left[\int E_n^\star(\lambda) \d F\right] = \int \E_P [E_n^\star(\lambda)] \d F \leq 1, 
\end{align*}
where the final equality follows from Tonelli's theorem and the final inequality comes from the fact that $(E_n^\star(\lambda))$ is a nonnegative supermartingale with initial value 1. 
Therefore, letting $E_n = E_n^\rws(\theta;\eps_n,T_n)$ for brevity, 
\begin{align*}
    \E_P[E_n] &= \E_P[E_n \ind {A_n}] + \E_P[E_n \ind{A_n^c}] 
    \leq 1 + T_n P(A_n^c) \\
    &\leq 1 + \left(4\exp(-n^{1-2\gamma}) + \left(\frac{A_{\delta}}{\eps_n^{2+\delta/2}n^{\delta\gamma/2}} + \frac{B_\delta}{\eps_n^{(3+\delta)/2} n^{(1+\delta)\gamma}}\right)\E\left[|X_1 - \theta|^{2+\delta}\right]\right) \cdot T_n. 
\end{align*}
That is, 
\[\E_P[E_n]\lesssim 1 + T_n \exp\{-n^{1-2\gamma}\} + \frac{T_n}{\eps_n^{2+\delta/2} n^{\delta\gamma/2}} + \frac{T_n}{\eps_n^{(3+\delta)/2} n^{(1+\delta)\gamma}},\]
which converges to one as long as $T_n \eps_n^{-2-\delta/2} = o(n^{\delta\gamma/2})$. Note that precisely the same argument goes through in the distribution-uniform case as long as $\sup_P \E_P|X_1 - \theta|^{2+\delta}<\infty$.

\subsection{Proof of Corollary~\ref{cor:rws-normal-mixture}}
\label{proof:rws-normal-mixture}

After taking $\eps_n = 1/\log n$ and $\gamma = 0.49$, $C\cdot n^{\delta 0.24} = o(n^{\delta\gamma/2} / \log^{2 + \delta/2}n)$ for any constant $C$. Therefore, it remains to compute the integral. 
Let $F$ be a Gaussian centered at zero with variance $v^2$. Set $U(X^n) = n(\shat_n^2 + \eps_n)$. 
Compute 
\begin{align*}
    G_n &:= \int\exp\left\{ \lambda S_n(\theta) - \frac{\lambda^2}{2}U(X^n)\right\}\d F \\ 
    &= \frac{1}{\sqrt{2\pi v^2}} \int \exp\left\{\lambda S_n(\theta) - \frac{\lambda^2}{2}U(X^n) - \frac{\lambda^2}{2v^2}\right\}\d \lambda \\ 
    &= \frac{1}{\sqrt{2\pi v^2}} \int \exp\left\{-\frac{\lambda^2 (v^2U(X^n) +1) - 2v^2\lambda S_n(\theta)}{2v^2}\right\}\d\lambda. 
\end{align*}
Set $h_n = v^2U(X^n) + 1$ in which case we can rewrite the above as 
\begin{align*}
    G_n &= \frac{1}{\sqrt{2\pi v^2}} \int \exp\left\{-\frac{\lambda^2 - 2v^2\lambda S_n(\theta)/h_n}{2v^2/h_n}\right\}\d\lambda \\ 
    &=  \frac{1}{\sqrt{2\pi v^2}} \int \exp\left\{-\frac{(\lambda - v^2S_n(\theta)/h_n)^2- v^4 S_n^2(\theta)/h_n^2}{2v^2/h_n}\right\}\d\lambda\\ 
    &= \exp\left\{\frac{v^2 S_n^2(\theta)}{2h_n}\right\}\frac{1}{\sqrt{2\pi v^2}} \int \exp\left\{-\frac{(\lambda - v^2S_n(\theta)/h_n)^2}{2v^2/h_n}\right\}\d\lambda. 
\end{align*}
Observing that the final integrand is the density of an unnormalized Gaussian with variance $v^2/h_n$, we have 
\begin{align*}
    G_n = \exp\left\{\frac{v^2S_n^2(\theta)}{2h_n}\right\}\frac{1}{\sqrt{h_n}} = \exp\left\{\frac{v^2\varphi^2(X^n)}{2h_n} - \frac{1}{2}\log(h_n)\right\}.  
\end{align*}
Setting $\rho = v^2$ completes the proof.

\subsection{Proof of Proposition~\ref{prop:asymptotic e-process}}
\label{proof:asymptotic e-process}
  Fix $m \in \NN$ and let $\tau$ be an arbitrary stopping time. Define $T_m = m^{\delta \cdot 0.24}$ and consider the event 
  \[A_m = \left\{ \sup_{k\geq m} \left|\shat_k^2 - \frac{U_k^\otimes}{k}\right| \leq \frac{1}{\log(m)}\right\}.\]
  In the proof of Theorem~\ref{thm:rws-evariable} we showed that 
  \begin{equation}
    P(A_m^c) \leq 4\exp(-m^{1-2\gamma}) + \left(\frac{A_{\delta}}{m^{\delta\gamma/2} \eps^{2+\delta/2}} + \frac{B_\delta}{\eps^{(3 + \delta)/2} m^{(1 + \delta)\gamma}}\right)\E_P|X_1 - \theta|^{2+\delta},
  \end{equation}
  Note that in that proof we did not have the supremum over $k\geq m$ in the event $A_m$ since it was not required. However, the bound remains the same with this supremum due to the uniformity of the bound in the nonasymptotic SLLN of \citet{ruf2025concentration}, stated in Lemma~\ref{lemma:nonasymp-slln}. 
  From here, write 
  \begin{align*}
      \E_P\left[E_{\taumax}^\brackm\right] &= \E_P\left[E_{\taumax}^\brackm\ind{A_m}\right] + \E_P\left[E_{\taumax}^\brackm\ind{A_m^c}\right] \\ 
      &\leq \E_P\left[E_{\taumax}^\brackm\ind{A_m}\right] + T_m P(A_m^c). 
  \end{align*}
  As we argued in the proof of Theorem~\ref{thm:rws-evariable}, $T_mP(A_m^c)\xrightarrow{m\to\infty} 0$. Further, on $A_m$, we have $\shat_k - \log^{-1}(m) \geq U_k^\otimes/k$ for all $k\geq m$, including $k = \tau \vee m$. Therefore, 
  \begin{align*}
      \E_P\left[E_{\taumax}^\brackm\ind{A_m}\right] &= \E_P\left[\left\{\int_{\lambda \in \Re}\exp \left \{ \lambda \sum_{i=1}^{\tau\vee m} (X_i - \theta) - U_{\tau\vee m}^\otimes\frac{\lambda^2}{2} \right \} d F(\lambda) \wedge T_m  \right\}\ind{A_m} \right]\\
      & \leq  \E_P\left[\int_{\lambda \in \Re}\exp \left \{ \lambda \sum_{i=1}^{\tau\vee m} (X_i - \theta) - U_{\tau\vee m}^\otimes\frac{\lambda^2}{2} \right \} d F(\lambda) \right] \leq 1,
  \end{align*}
  where the final inequality follows from the results of \citet{howard2020time} (in particular, the process above is a nonnegative supermartingale with initial value one, thus has expectation at most one at all stopping times). This completes the proof.

\subsection{Proof of Theorem~\ref{thm:E-sn-and-reg}}
\label{proof:E-sn-and-reg}

We take each asymptotic e-variable in turn. 

\subsubsection{$(E_n^\sn)$ is an asymptotic e-value.}

This asymptotic e-value is based on the following result from \citet[Lemma 3f]{howard2020time}. See also \citet[Lemma 3.1]{chugg2025time} for a statement which more closely resembles our presentation here (though in the multivariate setting), as well as a direct proof.

\begin{lemma}
\label{lem:howard-eval}
For any predictable sequence $(\lambda_n)$ taking values in $\Re$,  (i.e., $\lambda_n$ is $\calF_{n-1}=\sigma(X_1,\dots,X_{n-1})$-measurable), 
the process $(E_n^\star)_n$ given by
        \begin{equation}
            E_n^\star(\lambda) := \exp \left \{  \sum_{i=1}^n \lambda_i(X_i - \theta) -  \frac{1}{2}V_n^\otimes \right \},
        \end{equation}
        is a test supermartingale, where $V_n^\otimes$ is given by
        \begin{equation}
        \label{eq:vn-sn}
            V_n^\otimes := \frac{1}{3}\sum_{i=1}^n\lambda_i^2 ((X_i - \theta)^2 + 2\sigma^2).
        \end{equation}
        In particular, $E_\tau^\star$ is an e-value at any stopping time $\tau$.
    \end{lemma}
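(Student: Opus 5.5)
We prove the claim by showing that $\log E_n^\star$ decomposes into a sum of predictable-increment terms, each of whose conditional exponential moment is at most one. Write $Y_i := X_i - \theta$. Under the standing assumption, $\E[Y_i \mid \calF_{i-1}] = 0$ and $\E[Y_i^2 \mid \calF_{i-1}] = \sigma^2$. Set $E_0^\star := 1$. Then
\[
E_n^\star = E_{n-1}^\star \exp\Big\{\lambda_n Y_n - \tfrac{1}{6}\lambda_n^2\big(Y_n^2 + 2\sigma^2\big)\Big\}.
\]
Because $\lambda_n$ is $\calF_{n-1}$-measurable, the supermartingale property
\[
\E[E_n^\star \mid \calF_{n-1}] \leq E_{n-1}^\star
\]
reduces to a single conditional inequality:
\[
\E\Big[\exp\Big\{\lambda Y - \tfrac{\lambda^2}{6}\big(Y^2 + 2\sigma^2\big)\Big\} \,\Big|\, \calF_{n-1}\Big] \leq 1 ,
\]
for $\lambda = \lambda_n$ treated as a constant given the past, and $Y = Y_n$.

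The key ingredient is the deterministic inequality of Fan, Grama and Liu, as used by \citet{howard2020time}:
\[
\exp\Big(x - \tfrac{x^2}{6}\Big) \leq 1 + x + \tfrac{x^2}{3}, \qquad x \in \Re .
\]
Apply it with $x = \lambda Y$ and multiply both sides by $\exp(-\lambda^2\sigma^2/3)$. Taking conditional expectations, and using the zero conditional mean and the conditional variance $\sigma^2$, gives
\[
\E\Big[\exp\Big\{\lambda Y - \tfrac{\lambda^2}{6}\big(Y^2 + 2\sigma^2\big)\Big\} \,\Big|\, \calF_{n-1}\Big] \leq e^{-\lambda^2\sigma^2/3}\Big(1 + \tfrac{\lambda^2\sigma^2}{3}\Big) \leq 1 ,
\]
where the last step is $1 + u \leq e^u$. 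To pass the expectation through with a random but predictable $\lambda_n$, I would condition on $\calF_{n-1}$ and apply the pointwise inequality for each fixed value of $\lambda_n$. Nonnegativity makes all conditional expectations well defined, possibly infinite a priori; the bound then shows they are finite.

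The main obstacle is verifying the scalar inequality $\exp(x - x^2/6) \leq 1 + x + x^2/3$. My plan is to set
\[
\phi(x) = \log\Big(1 + x + \tfrac{x^2}{3}\Big) - x + \tfrac{x^2}{6},
\]
noting that $1 + x + x^2/3 > 0$ for all $x$ since its discriminant is negative. Then $\phi(0) = 0$ and $\phi'(0) = 0$. I would show that $\phi$ has a global minimum at $0$ by checking that $\phi'(x)$ has the sign of $x$: after clearing the positive denominator, the numerator should reduce to a polynomial of the form $x^2 \cdot c \cdot x$ with $c > 0$ for $x \neq 0$. If the algebra does not close cleanly, I will instead cite the inequality directly from \citet[Lemma 3f]{howard2020time}.

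Finally, $E_n^\star$ is a nonnegative supermartingale with $E_0^\star = 1$, that is, a test supermartingale. Optional stopping for nonnegative supermartingales (via Fatou's lemma applied to $E^\star_{\tau\wedge n}$) then gives $\E[E_\tau^\star] \leq 1$ for any stopping time $\tau$, including $\tau = \infty$ interpreted as the almost-sure limit. Hence $E_\tau^\star$ is an e-value.
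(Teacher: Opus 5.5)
Your proof is correct. The scalar inequality you flagged as the main obstacle closes cleanly: with $D(x)=1+x+x^2/3>0$ one gets $\phi'(x)=\frac{x^3}{9D(x)}$, so $\phi$ attains its minimum at $0$ with $\phi(0)=0$. The remaining steps are also sound: the conditional step with predictable $\lambda_n$ is legitimate because $1+x+x^2/3\ge 0$ and $\E[|\lambda_n Y_n|\mid\calF_{n-1}]<\infty$, and Fatou's lemma gives the optional stopping bound.

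The paper gives no argument of its own and instead defers to \citet[Lemma 3f]{howard2020time} and \citet[Lemma 3.1]{chugg2025time}. Your route through the Delyon/Fan--Grama--Liu inequality followed by conditioning is the standard proof behind those cited results.
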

    Recall that we're trying to prove that 
    \begin{equation*}
        E_n^\sn(\theta; \lambda^n) = \E_P\exp\left\{ \sum_{i\leq n}\lambda_i (X_i -\theta) - \sum_{i\leq n}\frac{\lambda_i^2}{6}[(X_i - \theta)^2 + 2\sigmahat_{i-1}^2]\right\},
    \end{equation*}
    defines an asymptotic e-value. Write 
    \[
        \lambda_i (X_i-\theta) - \frac{1}{6}\lambda_i^2[(X_i-\theta)^2 + 2\sigmahat_{i-1}^2] = \lambda_i (X_i-\theta) - \frac{1}{6}\lambda_i^2[(X_i-\theta)^2 + 2\sigma^2] + \frac{\lambda_i^2}{3}( \sigma^2-\sigmahat_{i-1}^2),\]
        so 
        \begin{equation}
            \E_P\exp\left\{ \lambda_i (X_i -\theta) - \frac{\lambda_i^2}{6}[(X_i - \theta)^2 + 2\sigmahat_{i-1}^2]|\calF_{i-1}\right\} \leq \E_P\exp\left\{ \frac{\lambda_i^2}{3}(\sigma^2 - \sigmahat_{i-1}^2)\right\},
        \end{equation}
        hence the law of total expectation gives
        \begin{equation*}
            \E_P\exp\left\{ \sum_{i\leq n}\lambda_i (X_i -\theta) - \sum_{i\leq n}\frac{\lambda_i^2}{6}[(X_i - \theta)^2 + 2\sigmahat_{i-1}^2]\right\} \leq \E_P\exp\left\{ \sum_{i\leq n}\frac{\lambda_i^2}{3}( \sigma^2-\sigmahat_{i-1}^2)\right\}.
        \end{equation*}
        It thus remains to show that the term on the right has limit at most 1. This is done via the following lemma. 

         \begin{lemma}
             Suppose that (i) $\sup_n \lambda_n <\infty$, (ii) $\sum_{i\leq n}\lambda_i^2 \xrightarrow{n\to\infty} L<\infty$ a.s.\, and (iii) $\lambda_i^2 / \sum_{j\leq n}\lambda_j^2 \xrightarrow{n\to\infty}0$ a.s. Then 
             \begin{equation}
                 \lim_n \E_P\exp\bigg(\frac{1}{3}\sum_{i\leq n}\lambda_i^2(\sigma^2 - \sigmahat_{i-1}^2)\bigg)=1.
             \end{equation}
         \end{lemma}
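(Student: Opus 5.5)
Let $Y_n := \frac13\sum_{i\leq n}\lambda_i^2(\sigma^2-\sigmahat_{i-1}^2)$; we need $\E_P e^{Y_n}\to 1$. The plan is to show $Y_n\to 0$ almost surely, obtain a deterministic upper bound on $Y_n$ that does not depend on $n$, and conclude by dominated convergence.

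\textbf{Step 1: almost sure convergence.} Set $D_i := \sigma^2-\sigmahat_{i-1}^2$. By the SLLN (finite variance, $P\in\calM_2(\theta)$), $\sigmahat_{i-1}^2\to\sigma^2$ almost surely, so $D_i\to 0$ almost surely. Write
\[
Y_n = \frac13 \Big(\sum_{j\leq n}\lambda_j^2\Big)\sum_{i\leq n} w_{i,n} D_i, \qquad w_{i,n} := \frac{\lambda_i^2}{\sum_{j\le n}\lambda_j^2}.
\]
The weights $w_{i,n}$ are nonnegative and sum to one, and by (iii) each individual weight vanishes as $n\to\infty$. The Toeplitz lemma therefore gives $\sum_i w_{i,n}D_i\to 0$ almost surely. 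Since $\sum_{j\leq n}\lambda_j^2\to L<\infty$ by (ii), it follows that $Y_n\to 0$ almost surely. (If $L=0$, then $\lambda_i=0$ for every $i$ and $Y_n\equiv 0$, so this case is trivial.) Note that condition (ii) essentially forces the weights to concentrate on early indices, so (iii) is doing real work here; I will use (iii) only through the Toeplitz step.

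\textbf{Step 2: domination.} Because $\sigmahat_{i-1}^2\ge 0$, we have $D_i\le\sigma^2$ for every $i$. Hence
\[
Y_n \le \frac{\sigma^2}{3}\sum_{i\le n}\lambda_i^2 \le \frac{\sigma^2}{3}\sup_n\sum_{i\le n}\lambda_i^2 .
\]
This is exactly where we use that the variance estimate enters only with a negative sign: the upper tail of $Y_n$ is controlled trivially, with no moment conditions on $X$. The main obstacle is that the right-hand side must be bounded by a constant, not merely by an almost surely finite random variable. The partial sums are nondecreasing and converge to $L$, but $L$ may be random, since the $\lambda_i$ are predictable. I would therefore read (ii) (together with (i)) as saying that $\sum_i\lambda_i^2\le L$ for a finite constant $L$. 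This holds, for instance, for the deterministic choices used in practice. If $L$ is genuinely random, the argument would additionally need $\E e^{\sigma^2 L/3}<\infty$, and I would add that hypothesis explicitly.

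\textbf{Step 3: conclusion.} Under the deterministic bound from Step 2, $e^{Y_n}\le e^{\sigma^2L/3}$, which is a constant. Combined with $e^{Y_n}\to 1$ almost surely from Step 1, bounded convergence gives $\E_P e^{Y_n}\to 1$. This proves the lemma. Consequently, $\limsup_n\E_P E_n^\sn\le 1$ by the preceding tower-property display, so $(E_n^\sn)$ is an asymptotic e-value.
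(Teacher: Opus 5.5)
Your route is the paper's: write the sum as $A_n=\sum_{i\le n}\lambda_i^2$ times a weighted average of $D_i=\sigma^2-\sigmahat_{i-1}^2$. You then get almost sure convergence to zero from a Toeplitz-type argument (the paper proves this by hand by splitting the sum at an index $N$; you cite the lemma), dominate using $\sigmahat_{i-1}^2\ge 0$, and finish with dominated convergence. The one step that does not close as written is Step 2 when $L$ is random, and you are right to flag it. The paper's ``$\sup_n\sum_{i\le n}\lambda_i^2\sigma^2<\infty$ by assumption'' gives only an almost surely finite dominating variable $e^{\sigma^2L/3}$. That does not justify dominated convergence without integrability, so the paper's proof has exactly the same hole.

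You do not need an extra hypothesis to close it, because (ii) and (iii) are jointly degenerate. Your remark that (ii) concentrates the weights on early indices leads here if you push it one step further. On any path where both hold with $L>0$, for each fixed $i$ we have $w_{i,n}=\lambda_i^2/A_n\to\lambda_i^2/L$. So (iii) forces $\lambda_i=0$ for every $i$, hence $L=0$, a contradiction. And $L=0$ forces $A_n=0$ for all $n$, i.e.\ every $\lambda_i=0$; here (iii) is either read with the convention $0/0=0$ or simply fails. So under the stated hypotheses $Y_n=0$ almost surely for every $n$, and $\E_P e^{Y_n}=1$ exactly, with no Toeplitz or domination step needed. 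In particular, your Step 1 in the case $L>0$ is vacuous. The lemma, and the part of Theorem~\ref{thm:E-sn-and-reg} about $E_n^{\sn}$ that rests on it, is therefore trivially true but carries no content as stated.
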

         \begin{proof}
             Let $A_n = \sum_{i\leq n} \lambda_i^2$. 
             We claim that the term $A_n^{-1}\sum_{i\leq n}\lambda_i^2 \sigmahat_{i-1}^2$ goes to $\sigma^2$ almost surely. If so, then by the continuous mapping theorem, 
             \[\sum_{i\leq n}\lambda_i^2(\sigma^2 - \sigmahat_{i-1}^2) = A_n\left(\sigma^2 -  \frac{\sum_{i\leq n}\lambda_i^2 \sigmahat_{i-1}^2}{A_n}\right)\xrightarrow{a.s.} L(\sigma^2 - \sigma^2) =0.\]
             Since $\sup_n \sum_{i\leq n}(\sigma^2 - \sigmahat_{i-1}^2)\leq \sup_n \sum_{i\leq n}\lambda_i^2 \sigma^2 <\infty$ by assumption,  we may apply the dominated convergence theorem to conclude that 
             \[\lim_n \E_P\exp\bigg(\frac{1}{3}\sum_{i\leq n}\lambda_i^2(\sigma^2 - \sigmahat_{i-1}^2)\bigg)=\E_P\exp\bigg(\frac{1}{3}\lim_n \sum_{i\leq n}\lambda_i^2(\sigma^2 - \sigmahat_{i-1}^2)\bigg)=1.\]
             Now it remains to show that $\sum_{i\leq n} w_{i,n}\sigmahat_{i-1}^2 \xrightarrow{a.s.}\sigma^2$, where $w_{i,n} = \lambda_i^2 / A_n$. Fix $\eps>0$ and let $\omega$ be a sample path on which $\sigmahat_{i-1}^2\to \sigma^2$ and $w_{i,n}\to 0$.  Then there exists some $N = N_{\eps,\omega}$ such that $|\sigmahat_{i-1}^2(\omega) -\sigma^2|\leq \eps$ for all $i>N$. Therefore, noting that $\sum_{i\leq n}w_{i,n} = 1$, 
             \begin{align*}
                 \left|\sum_{i\leq n}w_{i,n}\sigmahat_{i-1}^2(\omega) - \sigma^2\right| &\leq \sum_{i\leq n}w_{i,n}|\sigmahat_{i-1}^2(\omega) - \sigma^2| \leq \sum_{i\leq N} w_{i,n}|\sigmahat_{i-1}^2(\omega) - \sigma^2| + \eps \sum_{N<i\leq n}w_{i,n}. 
             \end{align*}
             The final sum is at most $\eps$ since $\sum_{N<i\leq n}w_{i,n} \leq \sum_{i\leq n}w_{i,n}=1$. Letting $W_N = \max_{i\leq N} |\sigmahat_{i-1}^2(\omega) - \sigma^2|$, the penultimate sum can be bounded as 
             \begin{align*}
                 \sum_{i\leq N} w_{i,n}|\sigmahat_{i-1}^2(\omega) - \sigma^2| &\leq N\cdot W_N \cdot \max_{i\leq N} w_{i,n} \xrightarrow{n\to\infty}, 
             \end{align*}
             since $w_{i,n} = \lambda_i^2 / A_n\to 0$ for each fixed $i$ by assumption. Since $\omega$ lies in a set which occurs with probability 1, we conclude that $\sum_{i\leq n} w_{i,n} \sigmahat_{i-1}^2 \to \sigma$ with probability 1. 
             \end{proof}

\subsubsection{$(E_n^\reg)$ is an asymptotic e-value} 
Let $\lambda_n\to \lambda$ almost surely. First let us suppose that $\eta>0$ is fixed. 
    Write 
    \begin{align*}
    E_n^\reg(\theta;\lambda_n,\eta) &= 
 \exp\left( 
    \lambda_n \,\frac{\sqrt{n}\,(\overline{X}_n - \theta)}
        {\widehat{\sigma}_n + \eta\, V_n(\theta) / \sqrt{n}}
    - \frac{\lambda_n^2}{2}
\right)
\\
&= \exp\left(
    \lambda_n \,\frac{\sqrt{n}(\overline{X}_n - \theta)}{\sigma}
    \cdot \frac{\sigma}{\widehat{\sigma}_n + \eta\, V_n(\theta) / \sqrt{n}}
    - \frac{\lambda_n^2}{2}
\right).
    \end{align*}
By the strong law of large numbers, we have
\[
\widehat{\sigma}_n \xrightarrow{\text{a.s.}} \sigma
\quad\text{and}\quad
\frac{V_n(\theta)}{\sqrt{n}} \xrightarrow{\text{a.s.}} \sigma.
\]
Moreover, by the central limit theorem,
\[
\frac{\sqrt{n}(\overline{X}_n - \theta)}{\sigma} \xrightarrow{d} Z
\quad\text{with}\quad Z \sim N(0,1).
\]
Applying Slutsky’s theorem, it follows that
\[
\exp\left(
    \lambda_n \,\frac{\sqrt{n}(\overline{X}_n - \theta)}
        {\widehat{\sigma}_n + \eta\, V_n(\theta) / \sqrt{n}}
    - \frac{\lambda_n^2}{2}
\right)
\xrightarrow{d}
\exp\left(
    \lambda \,\frac{Z}{1+\eta} - \frac{\lambda^2}{2}
\right).
\]
Lemma~\ref{lem:Ereg_ui} part (ii) ensures uniform integrability of $\{E_n^\reg(\theta;\lambda_n,\eta)\}_n$, so Theorem~25.12 in \cite{billingsley1995proba} implies that convergence in distribution yields
\[
\limsup_{n \to \infty} \E_P\left[E_n^\reg(\theta;\lambda_n,\eta)\right]
= \E_P\!\left[
    \exp\left(
        \lambda \,\frac{Z}{1+\eta} - \frac{\lambda^2}{2}
    \right)
\right].
\]
Since $Z \sim N(0,1)$, we can evaluate the Gaussian moment generating function to obtain
\[
\E_P\!\left[
    \exp\left(
        \lambda \,\frac{Z}{1+\eta} - \frac{\lambda^2}{2}
    \right)
\right]
= \exp\left(
    -\frac{\lambda^2}{2}
    \left[1 - \frac{1}{(1+\eta)^2}\right]
\right) \leq 1,
\]
with equality iff $\lambda=0$. 
This proves that $E_n^\reg(\theta;\lambda_n,\eta)$ is an asymptotic e-value. If $\eta_n\to \eta$ with $\lambda_n \lesssim \eta_n$ then Lemma~\ref{lem:Ereg_ui} part (iii) implies that $\{E_n^\reg(\theta;\lambda_n,\eta_n)\}$ is uniformly integrable. By Slutsky's theorem again, we have 
\[
\exp\left(
    \lambda_n \,\frac{\sqrt{n}(\overline{X}_n - \theta)}
        {\widehat{\sigma}_n + \eta_n\, V_n(\theta) / \sqrt{n}}
    - \frac{\lambda_n^2}{2}
\right)
\xrightarrow{d}
\exp\left(
    \lambda \,\frac{Z}{1+\eta} - \frac{\lambda^2}{2}
\right),
\]
and the rest of the proof remains unchanged from that for the \iwr e-variable.

\subsection{Proof of Theorem~\ref{thm:unif-reg-eval}}
\label{proof:unif-reg-eval}

Throughout the proof, when we write $\sup_P$, the supremum should be understood to be over $Q(\theta)$. 

Let $A_n = \lambda_n B_n - \lambda_n^2/2$,  $B_n = \lambda_n S_n(\theta) / (n\sigmahat_n^2 + V_n(\theta)\eta)$, and $g(y) = \exp(y)$. Let $Z=-\lambda^2/2$ with probability 1 (where $\lambda_n\to\lambda$) so $g(Z) = \E_P[g(Z)] = \exp(-\lambda^2/2)$.  We want to show that $g(A_n) = E_n^\reg(\theta;\lambda_n,\eta_n)$ satisfies the two conditions of Proposition~\ref{prop:uniform-billingsley}.

First, recall that in the proof of Lemma~\ref{lem:Ereg_ui} we showed that 
\[E_n^\reg(\theta;\lambda,\eta) \leq \exp\left(\frac{\lambda}{\eta} |T_n|\right),\]
where $T_n = S_n(\theta)/V_n(\theta)$ as usual. Since $\lambda_n\lesssim \eta_n$ by assumption, we can pick some $U$ with 
\[\max\left\{ \sup_n \frac{\lambda_n}{\eta_n},\lambda\right\}< U <\infty,\]
whence 
\[E_n^\reg(\theta;\lambda_n,\eta_n) \leq \exp(U T_n).\]
Then using a similar argument as in the case of $E_n^\iwr$, write 
\[g(A_n)\ind{g(A_n)\geq K} \leq \frac{\exp(\lambda_n T_n/\eta_n)}{K^{(U- \lambda_n)/\lambda_n}}\leq \frac{\exp(U T_n)}{K^{(U- \lambda_n)/\lambda_n}}.\]
As $n\to\infty$, $K^{(U - \lambda_n)/\lambda_n} \to K^{U/\lambda - 1}>K$, so $1/K^{(U-\lambda_n)/\lambda_n} \xrightarrow{n\to\infty}0$. (If $\lambda=0$, we interpret $U/0$ as $\infty$ and $1/\infty$ as 0.)
Consequently,   
to show that
\[\lim_{K\to\infty}\limsup_{n\to\infty} \sup_P \E_P[E_n^\reg(\theta;\lambda_n,\eta_n)\ind{E_n^\reg(\theta;\lambda_n,\eta_n)\geq K}]=0,\] 
it suffices to show that $\sup_P \exp(U T_n)$ is bounded by a finite constant independent of $n$ and $k$. This is done in precisely the same way as it was for $E_n^\iwr$. 

Next, we want to show that $\limsup_n \sup_P \E_P[g(A_n) \wedge K]\leq \exp(-\lambda^2/2)\wedge K$ for fixed $K>0$, which is the second condition of Proposition~\ref{prop:uniform-billingsley}. In fact, we will show that $\sup_P \E_P[g(A_n)\wedge K]\to 0$. Write 
\begin{align}
    \E_P[g(A_n)\wedge K] &= \int_0^K P(g(A_n) > u)\d u \notag \\ 
    &= \int_0^K P\left(B_n > \frac{\log(u)}{\lambda_n} + \frac{\lambda_n}{2}\right) \d u \notag \\
    &\leq \int_0^K P\bigg(\frac{|S_n(\theta)|}{n\sigmahat_n^2} > \underbrace{\frac{\log(u)}{\lambda_n} + \frac{\lambda_n}{2}}_{:=\eps_n}\bigg)\d u. \label{eq:pf-reg-du-1}
\end{align}
The remainder of the proof focuses on showing that the integrand decays at rate $1/\text{poly(n)}$ simultaneously for all $P$ which will complete the proof. While the details become rather devilish, the intuition is straightforward: we expect that $|S_n(\theta)|/(n\sigmahat_n^2)$ converges to 0, since $S_n(\theta)/n\to 0$ and $\sigmahat_n^2\to \sigma$. Meanwhile, $\eps_n$ converges to a constant or $+\infty$.

Let $\sigma_P^2$ be the variance of $P$, which is finite since the skew is finite.  
Notice that if $|S_n(\theta)| / (n\sigmahat_n^2)> \eps_n$, then either $|S_n(\theta)| > \eps n\sigma_P^2/2$ or $\sigmahat_n^2 < \sigma_P^2/2$. (If not, then $|S_n(\theta)| \leq \eps_n n\sigma^2/2 \leq \eps_n n \sigmahat_n^2$.) Therefore, 
\begin{align}
\label{eq:pf-reg-du-2}
    P\left(\frac{|S_n(\theta)|}{n\sigmahat_n^2} > \eps_n\right) \leq P\left(|S_n(\theta)| > \frac{\eps_n n \sigma_P^2}{2}\right) + P\left(\sigmahat_n^2 < \frac{\sigma_P^2}{2}\right). 
\end{align}
For the first term on the right side of~\eqref{eq:pf-reg-du-2} we apply Markov's inequality: 
\begin{align}
\label{eq:pf-du-reg-3}
    P\left(|S_n(\theta)| > \frac{\eps_n n \sigma_P^2}{2}\right) \leq \frac{4\Var(\sum_i X_i)}{n^2 \eps_n^2 \sigma_P^4} = \frac{4}{n\eps_n^2 \sigma_P^2} \leq \frac{4C}{n\eps_n^2}, 
\end{align}
for some $C>0$ by assumption of a uniformly lower bounded variance. 
Notice that $n\eps_n^2 \to \infty$ since 
\[n\eps_n^2 \asymp \frac{n}{\lambda_n^2} + n \lambda_n + n\lambda_n^2.\]
At least one of these terms goes to $+\infty$: If $n\lambda_n^2 \asymp c$ then $\lambda \asymp 1/\sqrt{n}$ so $n/\lambda_n^2 \asymp n^2 \to \infty$. 
Now for the next term in~\eqref{eq:pf-reg-du-2}. Let $Y_i = X_i - \theta$, $\Bar{Y} = \frac{1}{n}\sum_y Y_i$ and $\Bar{Y^2} = \frac{1}{n}\sum_i Y_i^2$. Recall the standard identity 
\[\sigmahat_n^2 = \frac{n-1}{n} \left(\Bar{Y^2} - \Bar{Y}^2\right).\]
Observe that $\sigmahat_n^2 \geq \sigma_P^2/2$ iff $\Bar{Y^2} - \Bar{Y}^2 \leq \frac{n-1}{2n}\sigma_P^2$ which implies that either $\Bar{Y}^2 > \sigma_P^2/4$ or $\Bar{Y^2} \leq \frac{n-1}{2n} \sigma_P^2 + \frac{1}{4}\sigma_P^2 \leq \frac{3}{4}\sigma_P^2$. A union bound gives 
\begin{equation}
\label{eq:pf-du-reg-4}
    P\left(\sigmahat_n^2 \geq \frac{\sigma_P^2}{2}\right) \leq P\left(\Bar{Y}^2 > \frac{\sigma_P^2}{4}\right) + P\left(\Bar{Y^2} \leq \frac{3}{4}\sigma_P^2\right). 
\end{equation}
Under $P$, $\Bar{Y^2}$ is an average of $n$ iid random variables with mean $\sigma_P^2$. Hence, by Markov's inequality, 
\begin{align*}
       P\left(\Bar{Y^2} \leq \frac{3}{4}\sigma_P^2\right) &= P\left(\sigma^2 - \Bar{Y^2} \geq \frac{1}{4}\sigma_P^2\right) \\ 
       &\leq P\left(|\sigma^2 - \Bar{Y^2}| \geq \frac{1}{4}\sigma_P^2\right)  
       \leq \frac{\E_P|n\sigma_P^2 - \sum_i Y_i^2|^{3/2}}{(n\sigma_P^2/4)^{3/2}}. 
\end{align*}
From here we appeal to an inequality of \citet{von1965inequalities}, which states that for $Z_i$ iid mean zero and any $1\leq p\leq 2$ with $\E|Z_1|^p<\infty$, it holds that
\[\E\bigg|\sum_{i\leq n}Z_i\bigg|^p \leq C_p \sum_{i\leq n} \E|Z_i|^p,\]
where $C_p$ is some (distribution-independent) constant depending on $p$. Using this above with $p=3/2$ gives 
\begin{equation*}
    \frac{\E_P|n\sigma_P^2 - \sum_i Y_i^2|^{3/2}}{(n\sigma_P^2/4)^{3/2}} \leq \frac{C_p n \E_P|\sigma_P^2 - Y_1^2|^{3/2}}{(n\sigma_P^2/4)^{3/2}} \leq \frac{D_p (\sigma_P^3 + \E_P|Y_1|^3}{\sqrt{n}\sigma_P^3},
\end{equation*}
where $D_p = \sqrt{2} \cdot 4^{3/2} C_p$. Here we've used the inequality $(a + b)^p \leq 2^{p-1}(a^p + b^p)$. Using the assumption of a uniform third moment, we have 
\begin{equation}
   \sup_P P\left(\Bar{Y^2} \leq \frac{3}{4}\sigma_P^2\right) \leq \frac{D_p}{\sqrt{n}}\left(1 + \sup_P \frac{\E_P|Y_1|^3}{\sigma_P^3}\right) \leq \frac{D_p}{\sqrt{n}}\left(1 + M\right).
\end{equation}
Next we need to handle the term $P(\Bar{Y}^2 > \sigma_P^4/4)$ in~\eqref{eq:pf-du-reg-4}, which is done via Chebyshev's inequality. Write 
\begin{align*}
    P\left(\Bar{Y}^2 > \frac{\sigma_P^2}{4}\right) = P\bigg(\bigg|\sum_{i\leq n}(X_i-\theta) \bigg| > \frac{n\sigma_P}{2}\bigg) \leq \frac{4\Var(\sum_i X_i)}{n^2\sigma_P^2} = \frac{4}{n}.
\end{align*}
Returning to~\eqref{eq:pf-du-reg-4}, we have shown that 
\begin{equation*}
    P\left(\sigmahat_n^2 \geq \frac{\sigma_P^2}{2}\right) \leq \frac{D_p}{\sqrt{n}}(1 + M) + \frac{4}{n}, 
\end{equation*}
which when combined with~\eqref{eq:pf-du-reg-3}, \eqref{eq:pf-reg-du-2}, and~\eqref{eq:pf-reg-du-1} gives 
\begin{align*}
    \limsup_{n\to\infty}\sup_P \E_P[g(A_n)\wedge K] &\leq \limsup_{n\to\infty} \int_0^K\left(\frac{4C}{n\eps_n^2} + \frac{D_p}{\sqrt{n}}(1+M) + \frac{4}{n}\right)\d u \\ 
    &\leq  \int_0^K\limsup_{n\to\infty}\left(\frac{4C}{n\eps_n^2} + \frac{D_p}{\sqrt{n}}(1+M) + \frac{4}{n}\right)\d u =0, 
\end{align*}
where we've used reverse Fatou's lemma to switch the limit superior and the integral, in addition to the fact that $n\eps_n^2\to \infty$ as argued above. 

We have thus demonstrated that $g(A_n)$ satisfies the two conditions of Proposition~\ref{prop:uniform-billingsley}, which completes the proof.

\subsection{Proof of Theorem~\ref{thm:reg-aphci}}
\label{proof:reg-aphci}

We now turn to $\calH_n^\reg$. Proceeding as usual, rewrite $E_n^\reg(\theta;\lambda,\eta)<2/\alpha$ as 
\begin{align*}
    y -\sigmahat_n A_n <  \eta \frac{V_n(\theta)}{\sqrt{n}}A_n, 
\end{align*}
where again, $y = \Xbar_n - \theta = S_n(\theta)/n$. Since $\eta>0$ and $V_n(\theta) \geq 0$, this inequality is trivially satisfied for $y<\sigmahat_n A_n$, i.e., $\theta > \Xbar_n - \sigmahat_n A_n$.
Thus $R_1 = (\Xbar_n - \sigmahat_n A_n, \infty)\subset \{\theta: E_n^\reg(\theta;\lambda,\eta)<2/\alpha\}$. To find the rest of the set, assume that $y\geq \sigmahat_n A_n$. Then both sides of the above display are positive and we may square them to obtain the inequality
\begin{equation}
    y^2 - 2y\sigmahat_n A_n + \sigmahat_n^2A_n^2 < \frac{\eta^2}{n} (\sigmahat_n^2(n-1) + ny^2)A_n^2. 
\end{equation}
Gathering terms, rewrite this as the polynomial equation 
\begin{equation}
    Q(y) := y^2( 1 - \eta^2A_n^2) - 2y\sigmahat_n A_n + \sigmahat_n^2A_n^2\left( 1 - \eta^2\left(\frac{n-1}{n}\right)\right) < 0. 
\end{equation}
Performing the relevant arithmetic on the quadratic equation, $Q(y)$ has roots 
\begin{equation}
    y_\pm = \frac{\sigmahat_n A_n\left( 1   \pm \sqrt{1 - (1 - \eta^2A_n^2)(1 - \eta^2\frac{n-1}{n})}\right)}{1 - \eta^2A_n^2}
\end{equation}
For $1 - \eta^2A_n^2>0$, we claim that $y_- < \sigmahat_nA_n$, thus disqualifying it from consideration (since we are considering $y$ such that $y\geq \sigmahat_n A_n$). Indeed, let $a = \eta^2 A_n^2$ and $b = \eta^2(n-1)/n$ and notice that 
\begin{align*}
    \sqrt{1 - (1 - a)(1 - b)}  
    &\geq 1 - (1 - a)(1 - b)  
    = b - ab + a > a,
\end{align*}
since $a,b<1$. Therefore, 
\[y_- = \frac{\sigmahat_n A_n (1 - \sqrt{1 - (1-a)(1-b)}}{1 - a} > \frac{\sigmahat_n A_n (1 - a)}{1-a} = \sigmahat_n A_n.\]
Consequently, the solution we are considering is $y_+$, which gives the half interval $\theta > \Xbar_n - y_+$, which subsumes the previous half interval since $y_+ \geq \sigmahat_n A_n$ (this is easy to see: $y_+$ multiples $\sigmahat_nA_n$ by something $>1$ in the numerator and divides it by something $<1$ in the denominator). Solving $E_n^\reg(\theta;-\lambda,\eta)<2/\alpha$ gives the other half interval $\theta < \Xbar_n - y_+$, completing the proof.

\section{Additional Experiments and Simulation Details}
\label{app:sims}

\subsection{Simulation details}

Code both to replicate the experiments and to use the methods independently can  be found at \url{https://github.com/bchugg/asymp-posthoc-confint}. 

\paragraph{Figure~\ref{fig:widths}.}
We generate a single realization of length $N=20000$ from an iid Gaussian model using random seed $0$. 
For each $t \in \{100,200,500,1000,2000,3000,5000,10000,20000\}$, we apply each method to the common prefix $X_{1:t}$ of this same realization, and record the resulting two-sided interval width $U_t - L_t$.
We fix $\alpha = 0.05$.
For ex ante anchoring, we consider both $\alpha_0 = \alpha$ and $\alpha_0 = \alpha/10000$.
We use $\rho=2$ and $\delta=0.1$ for $(\mathcal{H}_n^{\rws})$, and $R=20$ and $\kappa=5$ for $(\mathcal{H}_n^{\mix,\iwr})$.

\paragraph{Figure~\ref{fig:asymp_vs_nonasymp}.}
We implement the betting confidence interval using the \texttt{confseq} package
(\url{https://github.com/gostevehoward/confseq/tree/master/src/confseq}).
We generate a single data stream of length $N=10,000$ consisting of i.i.d.\ Bernoulli observations
$X_1,\dots,X_N \sim \mathrm{Bernoulli}(p)$ with $p=0.25$ (random seed fixed to $0$).
For each $t \in \{300,600,\ldots,9900\}$, all methods are applied to the common prefix $X^t$ and we record the resulting two-sided interval width $U_t-L_t$.
The Bernstein interval is computed on $X^t$ and the known variance $p(1-p)=0.25(1-0.25)$. 
For the mixture IWR method, we use $R=20$ and $\kappa=5$. 
For $(\mathcal H_n^{\iwr})$, we use ex ante anchoring with $\alpha_0=\alpha$ (as discussed in the text), i.e. we set 
$\lambda=\sqrt{2\log(2/\alpha_0)}$,

\newpage 

\subsection{Effect of \texorpdfstring{$R$ and $\kappa$ on $E_n^{\mix,\iwr}$}{R and kappa on truncated Gaussian}}

\begin{figure}[h!]
    \centering
    \begin{subfigure}[b]{0.45\textwidth}
        \centering
        \includegraphics[width=\linewidth]{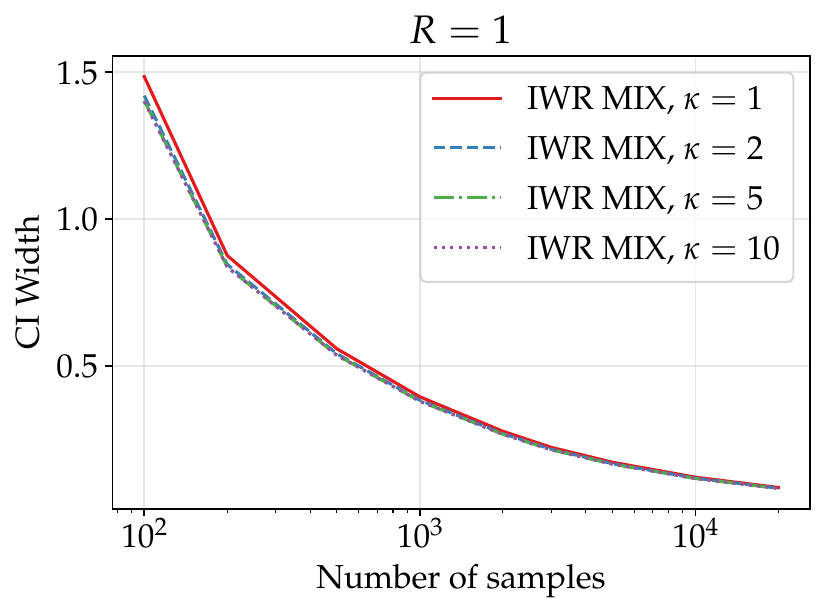}
    \end{subfigure}
    \hspace{0.2cm}
    \begin{subfigure}[b]{0.45\textwidth}
        \centering
        \includegraphics[width=\linewidth]{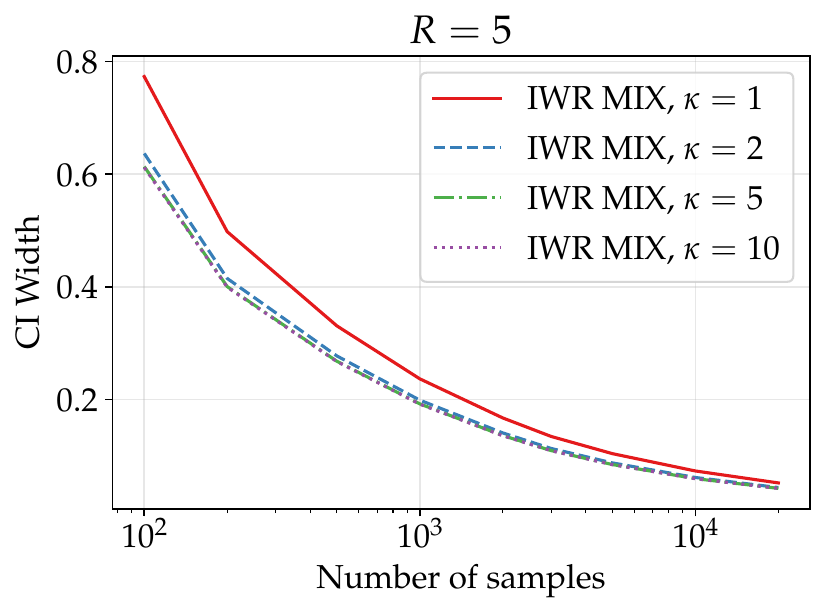}
    \end{subfigure}
    \begin{subfigure}[b]{0.45\textwidth}
        \centering
        \includegraphics[width=\linewidth]{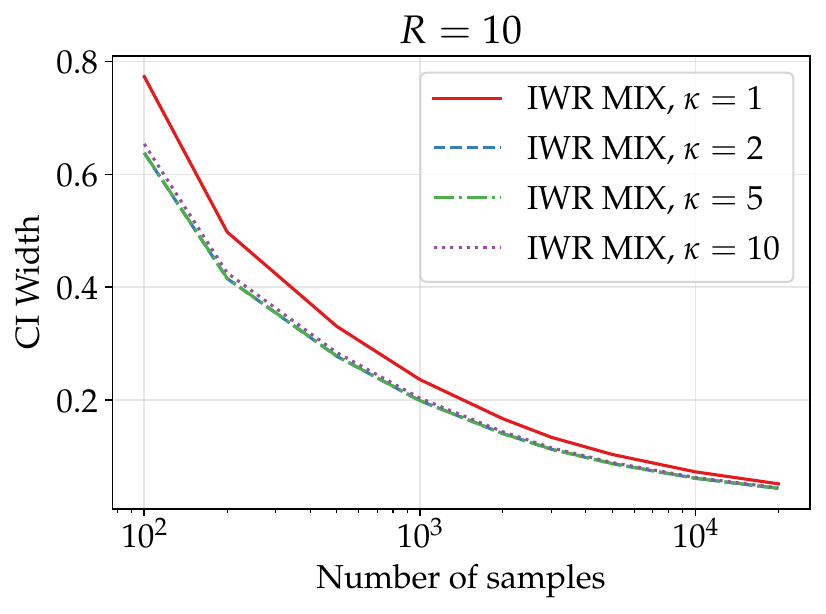}
    \end{subfigure}
    \hspace{0.2cm}
    \begin{subfigure}[b]{0.45\textwidth}
        \centering
        \includegraphics[width=\linewidth]{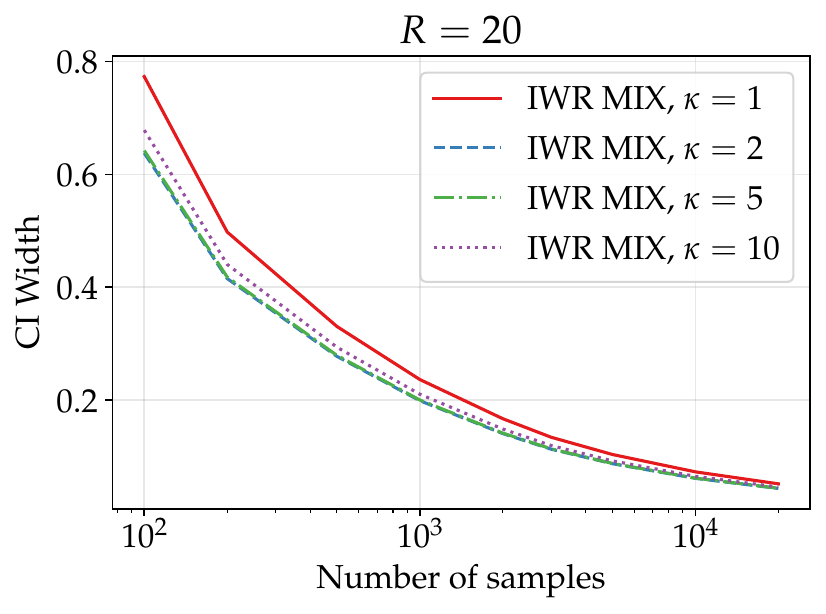}
    \end{subfigure}
    \begin{subfigure}[b]{0.45\textwidth}
        \centering
        \includegraphics[width=\linewidth]{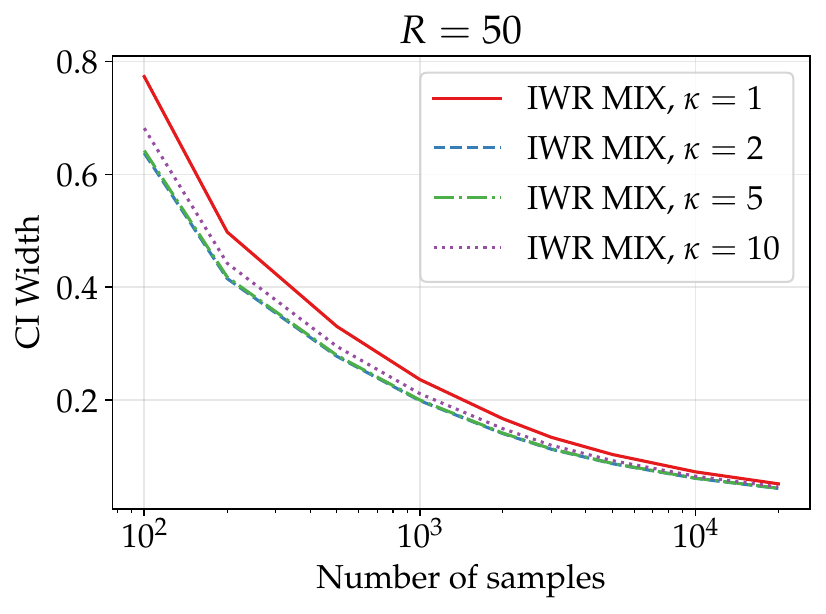}
    \end{subfigure}
    \hspace{0.2cm}
    \begin{subfigure}[b]{0.45\textwidth}
        \centering
        \includegraphics[width=\linewidth]{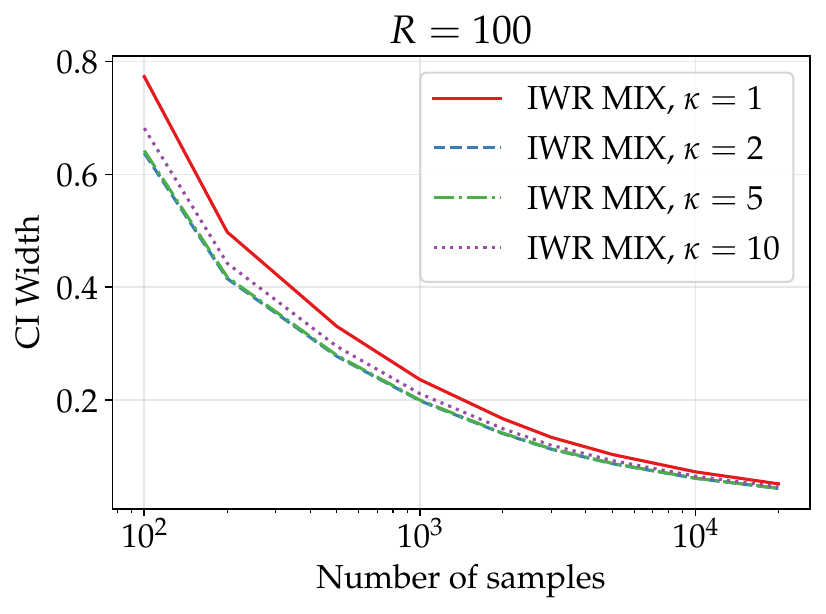}
    \end{subfigure}
    \caption{Effect of $R$ and $\kappa$ on the widths of \aphcis based on the truncated Gaussian mixture. Observations are drawn iid from a centered Gaussian with variance one. We use $\alpha=0.05$ across all figures.}
    \label{fig:R_kappa_ablations}
\end{figure}

Figure~\ref{fig:R_kappa_ablations} illustrates the effect of the parameters $R$ (range of the truncated Gaussian) and $\kappa$ (its variance) on the width of $(\calH_n^{\mix,\iwr})$. The results are plotted for a Gaussian distribution but the results are similar for a t-distribution. The results are also similar for $(\calH_n^{\mix,\reg})$, presented in Appendix~\ref{app:truncated-gaussian}.

\newpage

\subsection{APH-CIs: REG vs IWR}

\begin{figure}[h!]
    \centering
    \begin{subfigure}[b]{0.45\textwidth}
        \centering
        \includegraphics[width=\linewidth]{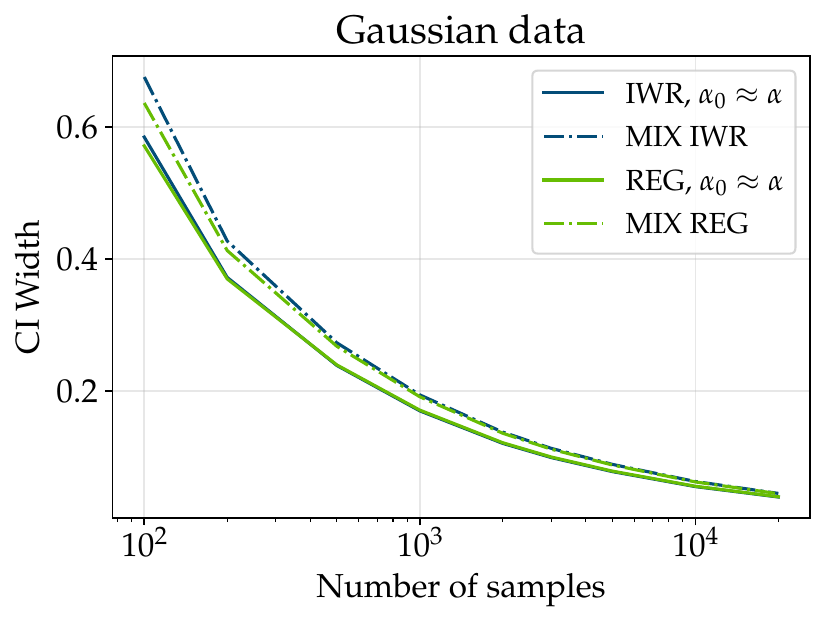}
    \end{subfigure}
    \hspace{0.2cm}
    \begin{subfigure}[b]{0.45\textwidth}
        \centering
        \includegraphics[width=\linewidth]{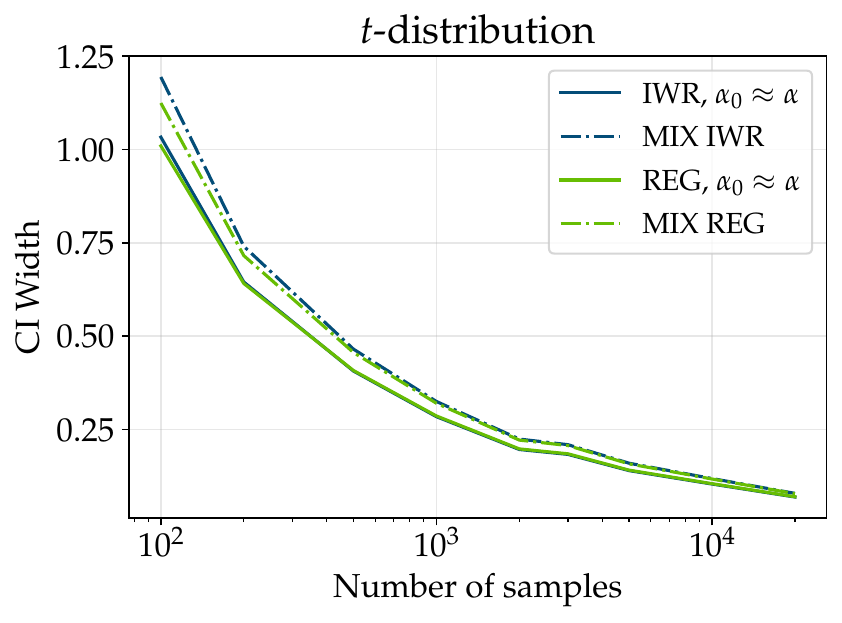}
    \end{subfigure}
    \begin{subfigure}[b]{0.45\textwidth}
        \centering
        \includegraphics[width=\linewidth]{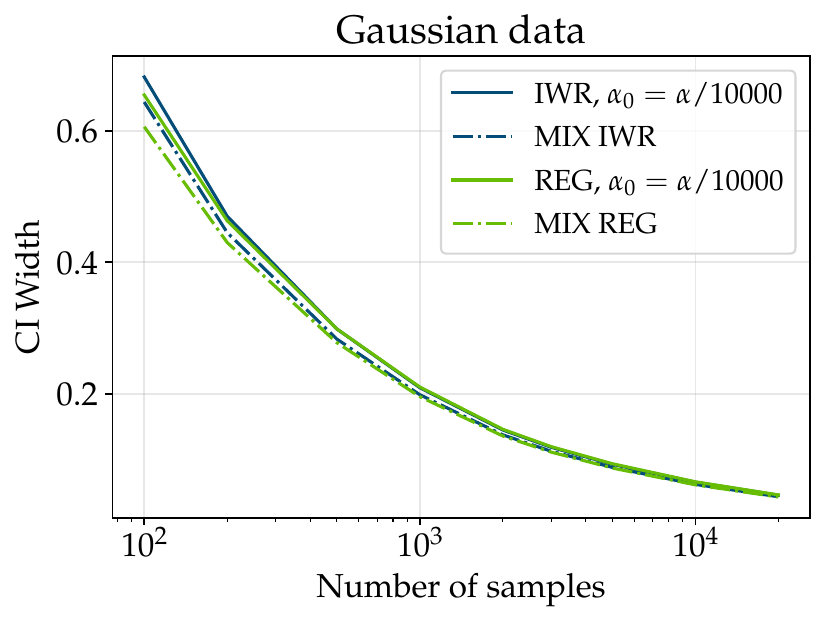}
    \end{subfigure}
    \hspace{0.2cm}
    \begin{subfigure}[b]{0.45\textwidth}
        \centering
        \includegraphics[width=\linewidth]{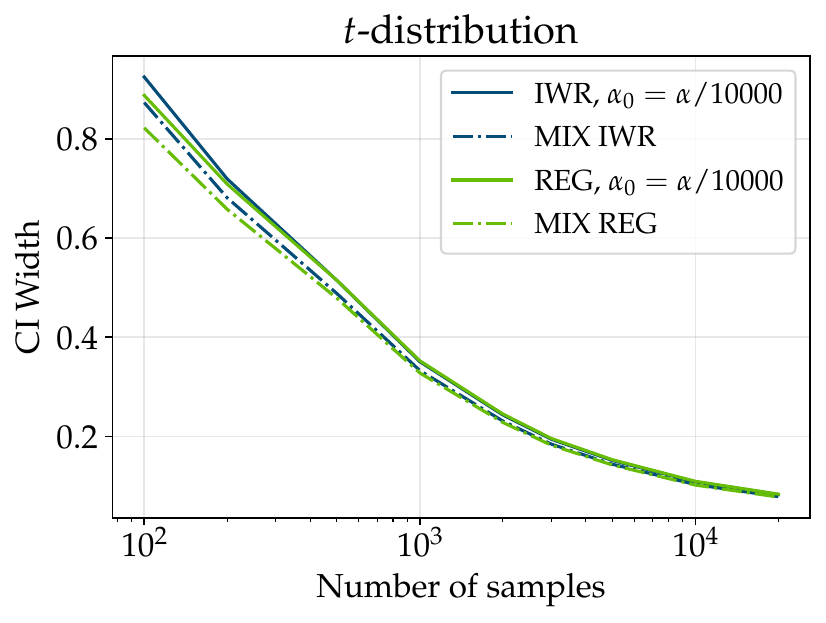}
    \end{subfigure}
    \caption{The two \aphcis based on the \iwr asymptotic e-variable vs the two \aphcis based on the \reg e-variable (Appendices~\ref{app:reg-aphci} and \ref{app:truncated-gaussian}). We generate data either from a Gaussian or from a t-distribution with three degrees of freedom. 
    Similarly to Figure~\ref{fig:widths}, \iwr and \reg are instantiated with ex ante anchoring, and we study two regimes: when $\alpha_0 \approx \alpha$ and when $\alpha/\alpha_0 \gg 0$. }
    \label{fig:iwr_vs_reg}
\end{figure}

Figure~\ref{fig:iwr_vs_reg} studies the two \aphcis resulting from the \iwr asymptotic e-variable, Theorems~\ref{thm:iwr-asymp-evar} and \ref{thm:iwr-mixture-aphci}, and the two resulting from the \reg asymptotic e-variable, Theorems~\ref{thm:reg-aphci} and \ref{thm:reg-mixture-aphci}. We instantiate $(\calH_n^\iwr)$ and $(\calH_n^{\reg})$ with ex ante anchoring. The remaining two are based on the method of mixtures. We choose $R=20$ and $\kappa=5$ as in the main paper. We take $\eta=0.01$ (see the following section for an investigation into the effect of $\eta$). Overall, we find that the two methods perform extremely similarly. The \aphcis based on the \reg e-variable are sometimes mildly tighter at small sample sizes, but this effect is quickly washed out. Given that the \iwr-based \aphcis have fewer tuning parameters, we recommend using them in practice.

\newpage

\subsection{Effect of \texorpdfstring{$\eta$}{eta} on \texorpdfstring{$\calH_n^\reg$}{REG APH-CI}}

\begin{figure}[h!]
    \centering
    \begin{subfigure}[b]{0.45\textwidth}
        \centering
        \includegraphics[width=\linewidth]{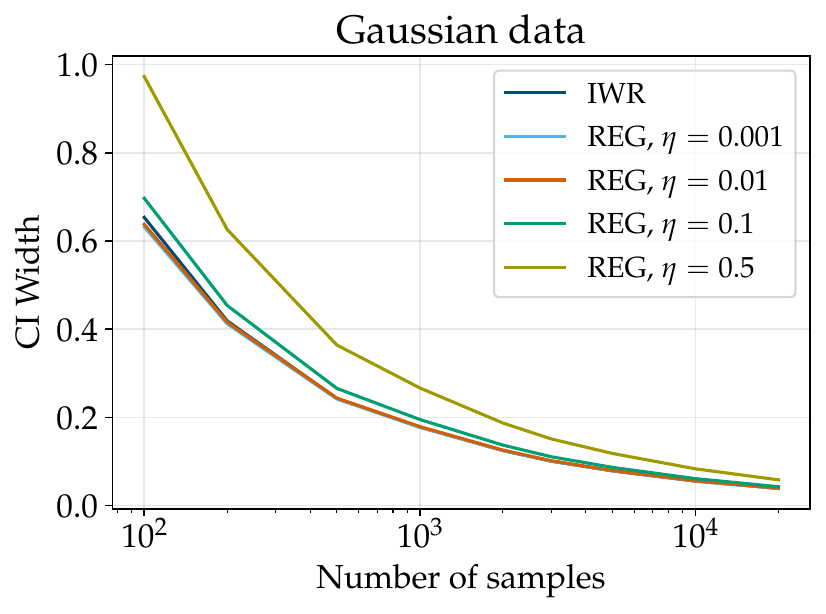}
    \end{subfigure}
    \hspace{0.2cm}
    \begin{subfigure}[b]{0.45\textwidth}
        \centering
        \includegraphics[width=\linewidth]{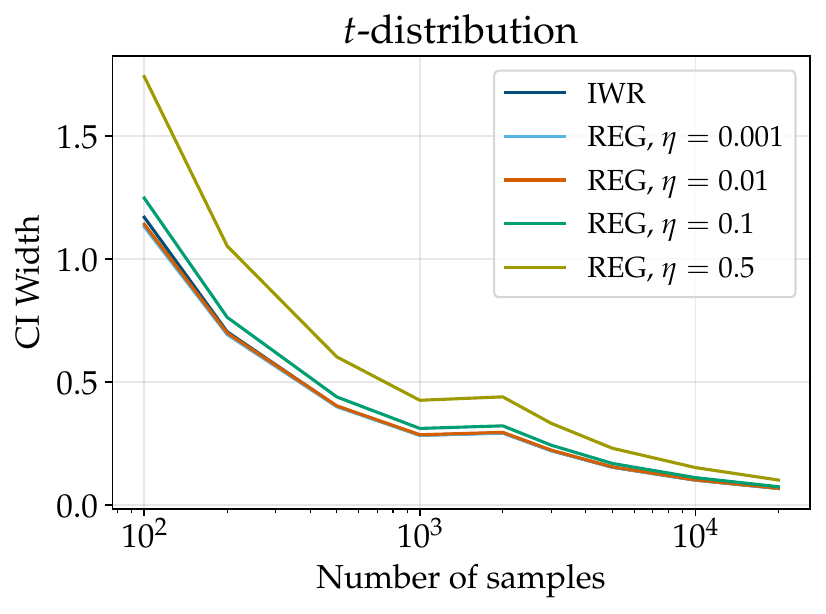}
    \end{subfigure}
    \begin{subfigure}[b]{0.45\textwidth}
        \centering
        \includegraphics[width=\linewidth]{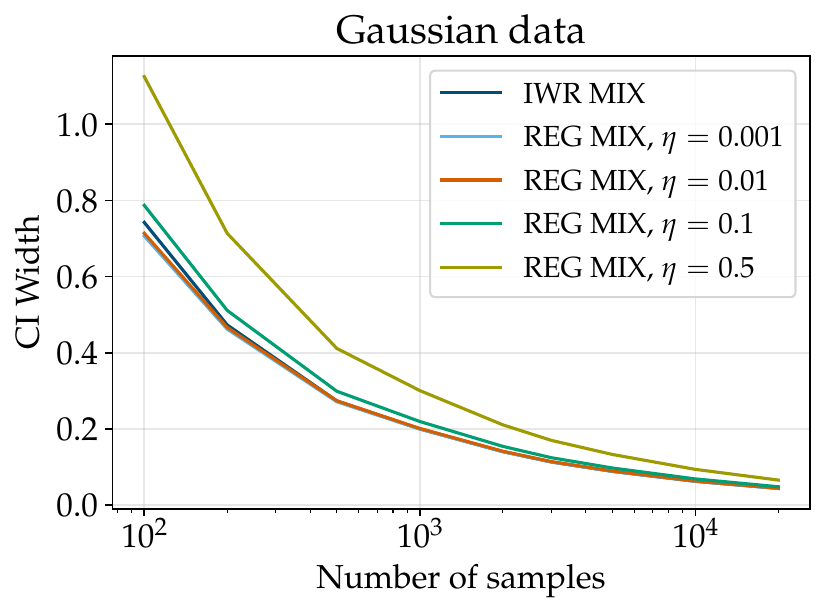}
    \end{subfigure}
    \hspace{0.2cm}
    \begin{subfigure}[b]{0.45\textwidth}
        \centering
        \includegraphics[width=\linewidth]{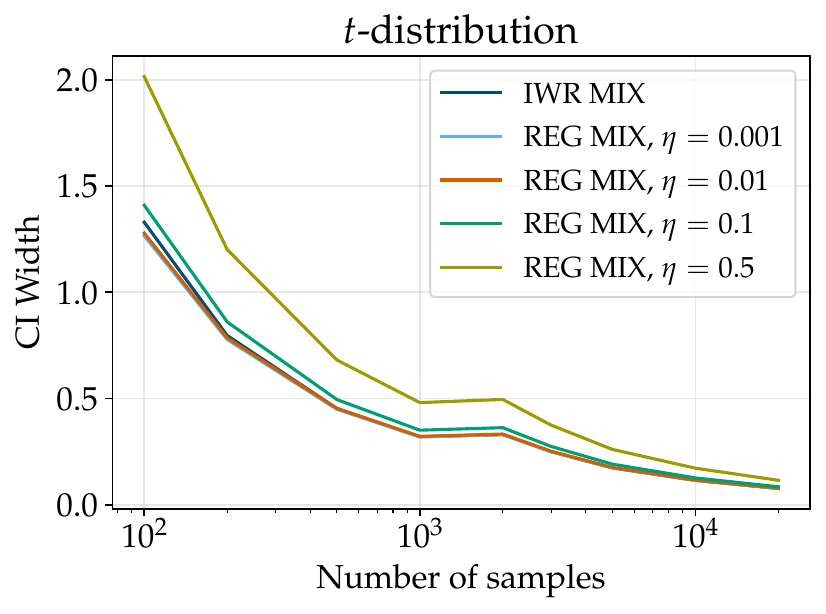}
    \end{subfigure}
    \caption{The effect of $\eta$ on the two \aphcis based on the \reg asymptotic e-variable. Here $\alpha = 0.05$. The \aphcis in the figures in the top row are all implemented with ex ante anchoring and $\alpha_0 = 0.01$. The mixture \aphcis in the bottom row are all implemented with $R=20$ and $\kappa=5$.  }
    \label{fig:eta_ablation}
\end{figure}

Figure~\ref{fig:eta_ablation} studies the effect of the free parameter $\eta$ on $(\calH_n^\reg)$ and $(\calH_n^{\mix,\reg})$. As $\eta\to 0$, the widths converge. In general, for small values of $\eta$, there differences between the \reg \aphcis and the \iwr \aphcis are negligible.

\end{document}